\documentclass[11pt]{amsart}
\usepackage[dvipsnames]{xcolor}
\usepackage[utf8]{inputenc}
\usepackage{amssymb,amsmath,amsthm,amsfonts,fullpage,float,latexsym,bbm,microtype,cite,fancyvrb}
\usepackage{tikz}
\usepackage{enumerate}
\usepackage{extarrows}
\usetikzlibrary{decorations.pathreplacing}
\usepackage{hyperref}
\hypersetup{colorlinks=true, linkcolor=blue, citecolor=magenta, filecolor=magenta, urlcolor=magenta}
\usepackage{bm,mathrsfs}

\makeatletter
\newtheorem*{rep@theorem}{\rep@title}\newcommand{\newreptheorem}[2]{%
\newenvironment{rep#1}[1]{%
\def\rep@title{\bf #2 \ref{##1}}%
\begin{rep@theorem}}%
{\end{rep@theorem}}}
\makeatother
\newreptheorem{theorem}{Theorem}

\newtheorem{theorem}{Theorem}[section]
\newtheorem{proposition}[theorem]{Proposition}

\newtheorem{conjecture}[theorem]{Conjecture}
\newtheorem{lemma}[theorem]{Lemma}
\newtheorem{corollary}[theorem]{Corollary}
\theoremstyle{definition}
\newtheorem{remark}[theorem]{Remark}

\newtheorem{example}[theorem]{Example}

\renewcommand{\emptyset}{\varnothing}

\newcommand{\C}{\mathbb{C}}

\DeclareMathOperator{\Hilb}{Hilb}

\DeclareMathOperator{\GL}{GL}

\DeclareMathOperator{\Sym}{Sym}

\DeclareMathOperator{\OSP}{OSP}
\DeclareMathOperator{\inv}{inv}
\DeclareMathOperator{\Inv}{Inv}
\DeclareMathOperator{\blocks}{blocks}
\DeclareMathOperator{\sgn}{sgn}

\DeclareMathOperator{\Fub}{Fub}

\DeclareRobustCommand{\qbinom}{\genfrac[]{0pt}{}}

\usepackage{xcolor}
\usepackage[colorinlistoftodos]{todonotes}

\begin{document}

\title{Universal Hilbert series coefficients of the superspace coinvariant ring}

\author[S. Corteel]{Sylvie Corteel}
\address{Department of Mathematics\\
         University of California, Berkeley, CA, USA}
\email{corteel@berkeley.edu}

\author[J. Lentfer]{John Lentfer}
\address{Department of Mathematics\\
         University of California, San Diego, CA, USA}
\email{jlentfer@ucsd.edu}


\begin{abstract}
The coefficients that determine the Hilbert series of the superspace coinvariant ring are indexed by hook-shaped partitions.
We give a manifestly positive combinatorial interpretation of these coefficients, together with several generating functions for them.
Specializing this Hilbert series at $u=-q^2$, we show that its coefficients are differences of binomial coefficients.
Consequently, this proves a conjecture of Sagan--Swanson (2024) that these coefficients are palindromic up to sign.
More generally, for every $m \geq 1$ we obtain closed-form expressions for the $u = -q^m$ specialization.
\end{abstract}

\maketitle

\section{Introduction}\label{sec:introduction}

The superspace coinvariant ring has recently been the subject of much work in algebraic combinatorics. 
The classical coinvariant ring 
\begin{equation}\label{eq:coinv} R_n^{(1,0)} = \C[x_1,\ldots,x_n]/\langle \C[x_1,\ldots,x_n]_+^{\mathfrak{S}_n} \rangle\end{equation}
affords the regular representation of the symmetric group $\mathfrak{S}_n$, and is isomorphic as a graded $\mathfrak{S}_n$-module to the cohomology ring of the flag variety \cite{Borel}.
The superspace coinvariant ring $R_n^{(1,1)}$ extends this setting to include both commutative and anticommutative variables.
Its sign character \cite{SwansonWallach1}, Hilbert series \cite{RhoadesWilson2023}, monomial basis \cite{Angarone2024}, and Frobenius series \cite{MuraiRhoadesWilson} have been determined, resolving conjectures of Sagan--Swanson \cite{SaganSwanson2024} and of Bergeron, Colmenarejo, Li, Machacek, Sulzgruber, and Zabrocki (reported in \cite{MuraiRhoadesWilson}).

The \textbf{superspace coinvariant ring} is defined by
\begin{equation}\label{eq:super-coinv} R_n^{(1,1)} = \C[x_1,\ldots,x_n, \theta_1, \ldots,\theta_n]/\langle \C[x_1,\ldots,x_n, \theta_1, \ldots,\theta_n]_+^{\mathfrak{S}_n} \rangle,\end{equation}
where $\C[x_1,\ldots,x_n, \theta_1, \ldots,\theta_n]_+^{\mathfrak{S}_n}$ denotes those polynomials without constant term which are invariant under the diagonal action of $\mathfrak{S}_n$ permuting each set of variables.
The $x_i$ are commutative variables and the $\theta_i$ are anticommutative variables, so that $x_ix_j = x_jx_i$ and $x_i\theta_j = \theta_j x_i$ but $\theta_i\theta_j = -\theta_j \theta_i$. In particular, $\theta_i^2=0$.
An equivalent construction is $(\Sym \C^n) \otimes (\wedge \C^n) = \C[x_1,\ldots,x_n, \theta_1, \ldots,\theta_n]$.

The superspace coinvariant ring $R_n^{(1,1)}$ is bigraded, where each $x_i$ contributes to the $q$-degree and each $\theta_i$ contributes to the $u$-degree. 
Let $(R_n^{(1,1)})_{r,s}$ denote the subspace of $R_n^{(1,1)}$ which has $q$-degree $r$ and $u$-degree $s$.
Then the bigraded Hilbert series is defined by 
\begin{equation} \Hilb(R_n^{(1,1)}; q; u) := \sum_{r,s \geq 0}\dim \left((R_n^{(1,1)})_{r,s} \right)q^{r} u^{s}.\end{equation}
Rhoades and Wilson proved the following explicit formula for the bigraded Hilbert series of $R_n^{(1,1)}$, where $S[n,k]$ is a $q$-analogue of the Stirling number of the second kind (defined in equation~\eqref{eq:q-Stirling-def}).
\begin{theorem}[\!\!\cite{RhoadesWilson2023}]\label{thm:hilbert}
    For any $n \geq 1$,
    \begin{equation}\label{eq:hilbert-rhoades-wilson}
        \Hilb(R_n^{(1,1)};q;u) = \sum_{k=0}^n [k]_q!S[n,k] u^{n-k}.
    \end{equation}
\end{theorem}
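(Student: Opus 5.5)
The Rhoades--Wilson formula in Theorem~\ref{thm:hilbert} is a deep result, and I would follow their strategy: identify $R_n^{(1,1)}$ with its associated graded for a point-orbit-type construction, and compute the Hilbert series by exhibiting an explicit basis together with a spanning set of relations. The plan is to fix the $u$-degree $n-k$ and work one $\theta$-degree at a time.

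For the upper bound, note that the defining ideal contains the superspace power sums $\sum_i x_i^a\theta_i$ and $\sum_i x_i^a$. I would then use a Gröbner-type or straightening argument to show that $(R_n^{(1,1)})_{\bullet,n-k}$ is spanned by products $x^{\alpha}\theta_{S}$ indexed by \emph{ordered set partitions}-type data. Concretely, this means $\theta_S$ with $|S|=n-k$, and monomials $x^\alpha$ satisfying generalized Artin-type exponent bounds determined by $S$. These are exactly the monomials counted by $[k]_q!\,S[n,k]$, which is the $\inv$-type generating function of ordered set partitions of $[n]$ into $k$ blocks. The key combinatorial step is matching the exponent constraints to the standard interpretation of $[k]_q!S[n,k]$, using the recursion $S[n,k]=S[n-1,k-1]+[k]_qS[n-1,k]$.

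For the lower bound, which I expect to be the main obstacle, one must show that these monomials are linearly independent modulo the ideal. Here I would pass to the harmonic side: take the orthogonal complement under the apolarity pairing and produce enough superharmonic polynomials. The candidate is to apply the operators $\sum_i \partial_{x_i}^a\partial_{\theta_i}$-style derivatives, or more precisely polarization-type operators $\sum_i \theta_i\partial_{x_i}$, to Vandermonde-like determinants in $k$ of the variables. Showing that the resulting elements span a space of the required bigraded dimension is the hard part. Rhoades--Wilson circumvent this via a geometric/orbit-harmonics degeneration, together with an inductive "Demazure character/Hall-Littlewood" exact-sequence argument. There, one proves short exact sequences relating the $u$-degree $n-k$ pieces for $n$ and $n-1$ that mirror the Stirling recursion, so dimension counts match. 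I would attempt this inductive exact-sequence approach first. The idea is to multiply by $x_n$ or $\theta_n$ and then restrict; the recursion then gives equality of Hilbert series, and summing over $k$ yields the formula.
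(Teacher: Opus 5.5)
The paper does not prove Theorem~\ref{thm:hilbert}. It quotes it from \cite{RhoadesWilson2023} and uses it purely as input; the introduction says explicitly that the authors take it as given. So within this paper the correct step is simply the citation, and your proposal has to be judged as a standalone proof. As such it has a genuine gap in both directions.

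\textbf{Upper bound.} You assert that $(R_n^{(1,1)})_{\bullet,n-k}$ is spanned by the monomials $x^A\theta_B$ with $B$ an $(n-k)$-subset of $\{2,\ldots,n\}$ and $0\le a_i<i-|B\cap\{1,\ldots,i\}|$. That is exactly the spanning half of the super-Artin basis theorem, which the paper credits separately to \cite{Angarone2024} as a substantial result in its own right. Invoking ``a Gr\"obner-type or straightening argument'' does not supply it. You would need, for every non-admissible $x^\alpha\theta_S$, an explicit element of the ideal with that leading term. The admissible exponents depend on $S$: the bound on $x_i$ drops by one for each element of $S$ in $\{1,\ldots,i\}$. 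The classical Artin elements $h_i(x_i,\ldots,x_n)$ only give the $S$-independent bound $a_i<i$. Producing the $S$-dependent relations by combining $\sum_i x_i^a$ and $\sum_i x_i^a\theta_i$ across different $\theta$-supports is the real content, and your sketch does not attempt it.

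\textbf{Lower bound.} This is not carried out at all.
\begin{itemize}
\item The one operator you write down, $d=\sum_i\theta_i\partial_{x_i}$, commutes with the $\partial_{x_i}$ and satisfies $d^2=0$. So together with $x$-derivatives it cannot take a $\theta$-free Vandermonde beyond $\theta$-degree $1$.
\item You would need the higher polarizations $\sum_i\theta_i\partial_{x_i}^a$. You would then need a proof that the resulting superharmonics contain $\langle q^r\rangle [k]_q!S[n,k]$ independent elements in each bidegree $(r,n-k)$, which is the entire theorem.
\item The exact-sequence fallback is equally unspecified. To mirror $[k]_q!S[n,k]=[k]_q\bigl([k-1]_q!S[n-1,k-1]+[k]_q!S[n-1,k]\bigr)$ you would need explicit maps exhibiting $(R_n^{(1,1)})_{\bullet,n-k}$, up to filtration, as two families of $k$ degree-shifted copies: one of $(R_{n-1}^{(1,1)})_{\bullet,n-k}$, and one of $(R_{n-1}^{(1,1)})_{\bullet,n-k-1}$ raised by one $\theta$. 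Proving exactness of such sequences contains both the spanning and the independence problems you are trying to avoid.
\end{itemize}

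Describing how Rhoades--Wilson ``circumvent'' the difficulty is not a substitute for doing so. Moreover, the orbit-harmonics mechanism you mention belongs to settings with a finite point locus to degenerate, as for the ring $R_{n,k}$ of \cite{HaglundRhoadesShimozono}. The anticommuting $\theta_i$ provide no such locus, so it cannot be invoked as stated. As written, your proposal is a research plan rather than a proof.
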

    
We recall diagonal supersymmetry for the Hilbert series of coinvariant rings, in the case of $R_n^{(1,1)}$.
Let $P(k,j,n)$ denote the set of partitions $\lambda$ with $\lambda_{k+1} \leq j$ and $\ell(\lambda) \leq n$.
\begin{proposition}[\!\!\cite{Lentfer-Supersymmetry}]\label{prop:DSS-Hilbert}
    Fix a positive integer $n$. For partitions $\lambda$ with $\ell(\lambda) \leq n$, there exist nonnegative integer coefficients $c_\lambda(n)$ such that the bigraded Hilbert series of $R_n^{(1,1)}$ is
\begin{equation}\label{eq:coeffs-Hilbert}
    \Hilb(R_n^{(1,1)}; q;u) = \sum_{\lambda \in P(1,1,n)} c_\lambda(n) s_\lambda(q/u).
\end{equation}
\end{proposition}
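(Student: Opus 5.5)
The plan is to realize $\Hilb(R_n^{(1,1)};q;u)$ as the character of a representation of the general linear Lie superalgebra $\mathfrak{gl}(1|1)$. We then read off the expansion from the classification of polynomial $\mathfrak{gl}(1|1)$-modules, whose irreducible characters are the hook (super-)Schur functions $s_\lambda(q/u)$.

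\textbf{Step 1: the superalgebra action on the ambient ring.} Write $\C[x_1,\ldots,x_n,\theta_1,\ldots,\theta_n]=\Sym(\C^n\otimes\C^{1|1})$, where $\C^{1|1}$ has one even basis vector (tracked by $q$) and one odd basis vector (tracked by $u$). The commuting actions of $\GL_n$ (hence of $\mathfrak{S}_n$) and of $\mathfrak{gl}(1|1)$ are as follows.
\begin{itemize}
\item The even generators of $\mathfrak{gl}(1|1)$ are the Euler operators $\sum_i x_i\partial_{x_i}$ and $\sum_i\theta_i\partial_{\theta_i}$.
\item The odd generators are the polarization operators $\sum_i x_i\partial_{\theta_i}$ and $\sum_i\theta_i\partial_{x_i}$.
\end{itemize}
Super Howe duality (Sergeev, Cheng--Wang) gives
\[
\Sym(\C^n\otimes\C^{1|1})\cong\bigoplus_{\lambda} S_\lambda(\C^n)\otimes S_\lambda(\C^{1|1}),
\]
where $\lambda$ runs over partitions with $\ell(\lambda)\le n$ and $\lambda_2\le 1$, that is, over $P(1,1,n)$. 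Here $S_\lambda(\C^{1|1})$ is irreducible with character $s_\lambda(q/u)$, the hook Schur function of Berele--Regev. In particular, the ambient ring is a polynomial $\mathfrak{gl}(1|1)$-module, and its irreducible constituents are all indexed by $P(1,1,n)$.

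\textbf{Step 2: the action descends to $R_n^{(1,1)}$.} All four operators are (super)derivations that commute with the diagonal $\mathfrak{S}_n$-action. Hence each maps $\mathfrak{S}_n$-invariants to $\mathfrak{S}_n$-invariants, and it preserves the positive-degree part because it preserves total degree. By the (super) Leibniz rule, a derivation $D$ satisfies $D(fg)=D(f)g\pm fD(g)$, so $D$ maps the ideal $\langle \C[x,\theta]_+^{\mathfrak{S}_n}\rangle$ into itself. Therefore $R_n^{(1,1)}$ is a quotient $\mathfrak{gl}(1|1)$-module of the ambient ring. The bigraded Hilbert series is exactly the character with respect to the even Cartan subalgebra, since the Euler operators measure $q$-degree and $u$-degree.

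\textbf{Step 3: decompose the quotient.} Each bigraded piece of $\C[x,\theta]$ of fixed total degree $d$ is the finite-dimensional module $\Sym^d(\C^n\otimes\C^{1|1})$, a finite sum of irreducibles $S_\lambda(\C^{1|1})$ with $\lambda\in P(1,1,n)$. The category of polynomial $\mathfrak{gl}(m|k)$-modules (summands of tensor powers of the natural representation) is semisimple by Sergeev and Berele--Regev. Hence every quotient of such a module is a direct sum of the same kind of irreducibles. Applying this degree by degree to $R_n^{(1,1)}$, which is finite-dimensional, gives
\[
R_n^{(1,1)}\cong\bigoplus_{\lambda\in P(1,1,n)} S_\lambda(\C^{1|1})^{\oplus c_\lambda(n)}
\]
with nonnegative integers $c_\lambda(n)$. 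Taking characters yields \eqref{eq:coeffs-Hilbert}.

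The main obstacle is the complete reducibility input in Step 3. Finite-dimensional $\mathfrak{gl}(1|1)$-modules are \emph{not} semisimple in general, so the argument must stay inside the polynomial category. The first thing to check is that the quotient map is a $\mathfrak{gl}(1|1)$-homomorphism out of a direct summand of $(\C^{1|1})^{\otimes d}\otimes(\text{multiplicity space})$, so that semisimplicity of that tensor power, via Schur--Sergeev duality, applies directly. A secondary point is matching the paper's convention for $s_\lambda(q/u)$ with the Berele--Regev hook Schur function in one even and one odd variable. If the convention differs, I would verify the match on the natural representation, whose character is $q+u$, and track any resulting sign or involution $\lambda\mapsto\lambda'$.
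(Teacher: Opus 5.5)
Your argument is correct. The Euler and polarization operators give a $\mathfrak{gl}(1|1)$-action that preserves the ideal, and each total-degree component of $R_n^{(1,1)}$ is then a quotient of the semisimple polynomial module $\Sym^d(\C^n\otimes\C^{1|1})$. The paper's $s_\lambda(q/u)$ is exactly the Berele--Regev hook Schur character with $q$ even (Lemma~\ref{lem:schur-1-1} gives the weights $(a|b)$ and $(a-1|b+1)$), so no convention adjustment is needed.

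The paper does not prove this proposition itself; it imports it from \cite{Lentfer-Supersymmetry}, where diagonal supersymmetry comes from this same mechanism of a polarization-operator action of $\mathfrak{gl}(k|j)$ plus complete reducibility of polynomial modules. Your proof is the $(k,j)=(1,1)$ case of that argument.
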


We establish a combinatorial formula for the coefficients $c_\lambda(n)$ defined in equation~\eqref{eq:coeffs-Hilbert}, which determine the super Schur expansion of $\Hilb(R_n^{(1,1)};q;u)$.
\begin{theorem}\label{thm:improved-coefficient-formula}
        For $a \geq 1$ and $b \geq 0$, the coefficients $c_{(a,1^b)}(n)$ from equation~(\ref{eq:coeffs-Hilbert}) may be computed in a manifestly positive manner by
    \begin{equation}\label{eq:second-formula-coeffs}
        c_{(a,1^b)}(n) = \#\{w \in \OSP(n,n-b)\, | \, \inv(w) = a,\ w \text{ is type I} \}.
    \end{equation}
\end{theorem}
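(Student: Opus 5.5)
The plan is to compare coefficients of monomials $q^a u^b$ on the two sides of equation~\eqref{eq:coeffs-Hilbert}, using Theorem~\ref{thm:hilbert} to read the left side combinatorially. By the standard interpretation of $[k]_q!\,S[n,k]$ as the generating function $\sum_{w\in\OSP(n,k)} q^{\inv(w)}$ over ordered set partitions, Theorem~\ref{thm:hilbert} gives
\begin{equation*}
[q^a u^b]\,\Hilb(R_n^{(1,1)};q;u) = \#\{w\in\OSP(n,n-b) \,:\, \inv(w)=a\}.
\end{equation*}
On the other side, the super Schur function in one even variable $q$ and one odd variable $u$ vanishes unless $\lambda$ is a hook, and then $s_{(a,1^b)}(q/u) = q^{a-1}u^{b}(q+u)$. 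Since $\Hilb$ has constant term $1$, this accounts for $c_\emptyset(n)=1$. Extracting the coefficient of $q^a u^b$ for $(a,b)\neq(0,0)$ then yields the triangular system
\begin{equation*}
\#\{w\in\OSP(n,n-b)\,:\,\inv(w)=a\} = c_{(a,1^b)}(n) + c_{(a+1,1^{b-1})}(n),
\end{equation*}
with the convention that $c_{(a',1^{b'})}(n)=0$ whenever $a'<1$ or $b'<0$. This system determines all the $c_\lambda(n)$ uniquely by induction on $b$: at $b=0$ the second term is absent, and at $a=0$ the first term is absent.

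It therefore suffices to show that the type I counts satisfy the same recursion with the same initial conditions. Concretely, I will partition the set $\{w\in\OSP(n,n-b):\inv(w)=a\}$ into its type I elements and its remaining elements (call them type II), and build a bijection
\begin{equation*}
\Phi:\{w\in \OSP(n,n-b)\,:\,\inv(w)=a,\ w\text{ not type I}\}\longrightarrow\{w'\in\OSP(n,n-b+1)\,:\,\inv(w')=a+1,\ w'\text{ type I}\}.
\end{equation*}
Note that $\Phi$ increases the number of blocks by one and raises $\inv$ by exactly one. The natural candidate is a splitting move: take a distinguished element of $w$ (for instance the smallest element, or the last element of some designated block, chosen according to whatever local configuration fails the type I condition) and pull it out into a new singleton block at the position that creates exactly one new inversion. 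The inverse is the merging move, which absorbs a suitable singleton block of a type I partition back into a neighbour and thereby loses one inversion. The base cases should be checked directly. When $b=0$, every $w\in\OSP(n,n)$ is a permutation, and I expect all of them to be type I, which matches $c_{(a)}(n)$ being the Mahonian number. When $a=0$, the only partition with $\inv=0$ should be non-type-I, consistent with the absence of a hook with $a=0$.

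The main obstacle is this bijection, and specifically verifying that $\Phi$ lands in type I and that its image is \emph{all} of type I with $\inv=a+1$. To handle it, I will first work out the precise definition of the inversion statistic on ordered set partitions (pairs $i>j$ with $i$ the minimum of its block, in a block to the left of $j$, or the analogous convention) and of the type I condition. I will then check that the type I condition is exactly the condition that no splitting move is available, or equivalently that a merging move is available. If a direct bijection proves awkward, the fallback is to unroll the recursion into the alternating sum
\begin{equation*}
c_{(a,1^b)}(n)=\sum_{j\ge 0}(-1)^j\,\#\{w\in\OSP(n,n-b+j)\,:\,\inv(w)=a+j\},
\end{equation*}
and construct a sign-reversing involution on the disjoint union of these sets whose fixed points are exactly the type I partitions in the $j=0$ term.
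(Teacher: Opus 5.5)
Your reduction is sound: the triangular system is the one the paper derives on the way to \eqref{eq:first-formula-coeffs}, and showing that the type I counts satisfy it is equivalent, for $(a,b)\neq(0,0)$, to a bijection from the non-type-I elements of $\OSP(n,n-b)$ with $\inv=a$ onto the type I elements of $\OSP(n,n-b+1)$ with $\inv=a+1$. The gap is that this bijection is the entire content of the theorem, and you have not constructed it. Moreover, the check you plan to run would fail. Type I is \emph{not} the condition that no splitting move is available, and it is not the condition that some merging move is available. For example, $\{3\}/\{1,2\}$ is type I but splittable at $2$, while $\{2\}/\{1\}/\{3,4\}$ is mergeable at $2$ but type II. Your distinguished element (``the smallest element, or the last element of some designated block'') is also left unspecified. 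Finally, the inversion statistic you guess is not the paper's, which uses pairs $(s,S_j)$ with $s$ in an earlier block and $s\geq \min S_j$. Under your convention, splitting $3$ off in $\{1,3\}/\{2\}$ creates two new inversions rather than one.

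The missing idea is that type I is \emph{defined} through the Sagan--Swanson involution $\phi$, so the bijection you want is $\Phi=\phi$ restricted to type II. Concretely, let $M$ be the largest block maximum at which $w$ is splittable or mergeable; if this is a split at $M=\max S_i$, replace $S_i$ by $\{M\}/S_i\setminus\{M\}$. Maximality of $M$ forces every element of an earlier block to be smaller than $M$. Indeed, if $s\in S_h$ with $h<i$ and $s>M$, then $S_h$ must be a non-mergeable singleton, so $\max S_{h+1}>\max S_h> M$; iterating gives $\max S_i>M$, a contradiction. Hence the split adds exactly one block and exactly one inversion, namely $(M,S_i\setminus\{M\})$. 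Since $\phi$ is an involution exchanging types I and II, $\Phi$ is onto the type I elements with the shifted parameters. The only type III element is $\{1\}/\{2\}/\cdots/\{n\}$, which lies in the excluded case $(a,b)=(0,0)$. Every permutation with $\inv\geq 1$ has a descent and is therefore type I, so your $b=0$ base case should say ``all permutations with $\inv\geq 1$'', not all permutations. With this filled in, your recursive route is complete and somewhat more direct than the paper's. The paper unrolls the recursion into the alternating sum \eqref{eq:first-formula-coeffs} and must then track, in Lemma~\ref{lem:pairing-lemma}, which $\phi$-pairs leave the truncated set $\mathcal S_{n,a+2b,b}$. Your formulation needs only that $\phi$ maps type II to type I while shifting $(\blocks,\inv)$ by $(+1,+1)$.
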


\begin{remark}
    Finding a combinatorial interpretation for these coefficients $c_\lambda(n)$, in a slightly different framing, was contributed by the present authors in \cite[Problem 9]{FirstProof} as a challenge for testing the ability of AI systems to autonomously solve short research-level math problems.
    Several AI-generated solutions were proposed and subsequently refereed; some were novel and substantially correct solutions, yet different from our human-generated solution presented here.
\end{remark}

Combining the Rhoades--Wilson formula, diagonal supersymmetry \cite{Lentfer-Supersymmetry}, and a new combinatorial argument on ordered set partitions, we establish another of our main results.
\begin{theorem}\label{thm:main-theorem}
For $n \geq 1$,
    \begin{equation}
    \Hilb(R_n^{(1,1)};q;u)|_{u=-q^2} = 1+ \sum_{i=1}^n q^i \left( \binom{n}{i} - \binom{n}{i-1} \right).
\end{equation}
\end{theorem}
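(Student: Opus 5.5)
The plan is to substitute $u=-q^2$ directly into Theorem~\ref{thm:hilbert}, read the result as a signed sum over ordered set partitions, and cancel it with a sign-reversing involution. Expanding instead through Proposition~\ref{prop:DSS-Hilbert} is a useful consistency check, since it reduces everything to hook coefficients. For a hook $(a,1^b)$ with $a\ge 1$, the super Schur function in one even and one odd variable is $s_{(a,1^b)}(q/u)=q^{a-1}u^b(q+u)$. Hence at $u=-q^2$ it becomes $(-1)^b q^{a+2b}(1-q)$, and
\begin{equation*}
\Hilb(R_n^{(1,1)};q;u)|_{u=-q^2} = c_{\emptyset}(n) + (1-q)\sum_{a\ge 1,\, b\ge 0} (-1)^b c_{(a,1^b)}(n)\, q^{a+2b}.
\end{equation*}
The target right-hand side equals $(1-q)(1+q)^n + q^{n+1}$, because $(1-q)(1+q)^n=\sum_{i=0}^{n+1} q^i\bigl(\binom{n}{i}-\binom{n}{i-1}\bigr)$ and the $i=n+1$ term of this sum is $-q^{n+1}$. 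Comparing the two forms predicts what the alternating sum of hook coefficients must be, and this serves as a target at each stage.

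For the main argument I work with Theorem~\ref{thm:hilbert}. I use the standard model $[k]_q!\,S[n,k]=\sum_{w\in\OSP(n,k)} q^{\inv(w)}$, with the inversion statistic used to define $\OSP$ and type I later in the paper. The specialization then becomes the signed sum
\begin{equation*}
\sum_{k=0}^n \sum_{w\in \OSP(n,k)} (-1)^{n-k} q^{\inv(w)+2(n-k)}.
\end{equation*}
Merging two blocks lowers $k$ by one and so raises the weight exponent by $2$. I therefore look for a sign-reversing involution $\iota$ that merges or splits blocks, changing $k$ by exactly $1$ and $\inv$ by exactly $-2$, so that $\inv(w)+2(n-k)$ is preserved and the sign flips. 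The first candidate is to scan elements in decreasing order, starting from $n$, and locate the first element $x$ that is either a singleton block which can be absorbed into an adjacent block, or a non-minimal element of its block which can be split off. The elements larger than $x$ are then unchanged by the move, which should make the map well defined and invertible.

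Next I identify the fixed points. I expect them to be the configurations in which every element is either forced to be a singleton or forced to stay in its block; for example, each block consists of an element together with at most one specific partner. The fixed points should be in bijection with $0/1$ words, or subsets of $[n]$, carrying weight $q^{|S|}$ and a sign governed by a ballot-type condition. This would produce the ballot differences $\binom{n}{i}-\binom{n}{i-1}$, since those count lattice paths that stay weakly above the diagonal. As a check, at $n=1$ the sum is $1+q-q^2$, which already exhibits a fixed-point set mixing signs (a single one of degree $2$). I will test small $n$ by hand and calibrate the involution against the hook computation above, or against the type I description of Theorem~\ref{thm:improved-coefficient-formula} (in which $b=n-k$).

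The main obstacle is making the merge/split rule change $\inv$ by exactly $2$ in every case. How much the inversion count shifts when an element moves depends on its relative position among the block minima. So the involution will likely have to pair an element with a specifically chosen neighboring block, for instance the block immediately to its left whose minimum lies just below $x$, rather than simply adjoining $x$ to any block. If no uniform rule works, the fallback is to prove the identity by induction on $n$. The induction would insert $n$ into an ordered set partition of $[n-1]$, using $S[n,k]=S[n-1,k-1]+[k]_qS[n-1,k]$, and track the auxiliary polynomials $F_{n,k}(q)=\sum_{j\le k}[j]_q!S[n,j](-q^2)^{n-j}$; it would then verify that the closed form $(1-q)(1+q)^n+q^{n+1}$ satisfies the resulting recursion.
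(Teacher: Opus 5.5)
Your proposal has a genuine gap: the sign-reversing involution that is supposed to carry the argument is never constructed, and the candidate you sketch does not do the job. Its most natural realization is to merge a maximal singleton into the next block, or split off the maximum of a non-singleton block. That is the Sagan--Swanson involution $\phi$, which changes $\inv$ by exactly one. It therefore preserves $\inv(w)+(n-\blocks(w))$, the $u=-q$ weight (this is why $\mathcal H_n^{(1)}=1$), not $\inv(w)+2(n-\blocks(w))$.

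Your guess about the fixed points is also inconsistent. If those of weight $q^i$ were the $\binom{n}{i}$ subsets of size $i$ carrying signs, their signed count would have the parity of $\binom{n}{i}$. But $\binom{n}{i}-\binom{n}{i-1}$ has the parity of $\binom{n+1}{i}$, and these differ already at $n=2$, $i=1$; there $\OSP(2)$ has only three elements, of signed weights $1$, $q$, $-q^2$. Your small check is also misplaced: $1+q-q^2$ is the value at $n=2$, while at $n=1$ the specialization is $1$, as your own closed form $(1-q)(1+q)^n+q^{n+1}$ says. Finally, the fallback induction is not yet an argument. The partial sums $F_{n,k}$ do not visibly close under the recurrence, because the weights $[k]_q$ vary with $k$, and you never state the recursion to be verified.

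The missing idea is a two-stage cancellation. Your consistency check already isolates the right target, $\sum_b(-1)^b c_{(\ell-2b,1^b)}(n)=\binom{n}{\ell}-1$, which is exactly what the paper proves.

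Inside your own signed sum, $\phi$ performs the first stage. Its unique fixed point $\{1\}/\cdots/\{n\}$ has weight $1$, and it maps each type I $w$ to a type II object whose signed weight is $-q$ times that of $w$. Hence $\Hilb(R_n^{(1,1)};q;u)|_{u=-q^2}=1+(1-q)\sum_{w\text{ type I}}(-1)^{n-\blocks(w)}q^{\inv(w)+2(n-\blocks(w))}$. The paper reaches the same expression via the super Schur expansion and Theorem~\ref{thm:improved-coefficient-formula}.

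The second stage is a different involution $\psi$ on type I objects in the $(B,A)$ encoding. It adds an index $k$ to $B$ while lowering \emph{two} entries $a_k$ and $a_{\hat\jmath}$ of the inversion sequence, which is what shifts $\inv$ by $2$ and $n-\blocks$ by $1$. Its fixed points (type IC) are permutations, all of sign $+1$. Those with $\inv=\ell$ number $\binom{n}{\ell}-1$ for $\ell\le n$ (none for $\ell>n$), via $f_{n,\ell}=\binom{n-1}{\ell}+f_{n-1,\ell-1}$. The ballot differences then appear as $(\binom{n}{i}-1)-(\binom{n}{i-1}-1)$ from the factor $1-q$, not as signs of individual fixed points.

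For the algebraic fallback, the needed step is to write $[k]_q=(1-q^k)/(1-q)$ in $S^o[n,k]=[k]_q(S^o[n-1,k-1]+S^o[n-1,k])$. The $q^k$ part turns the $u=-q^2$ sum at $n-1$ into $q^{n-1}$ times the $u=-q$ sum, which is identically $1$. This yields $\mathcal H_n^{(2)}=(1+q)\mathcal H_{n-1}^{(2)}-q^n$, which your closed form satisfies with $\mathcal H_0^{(2)}=1$.
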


As a consequence, we deduce a conjecture of Sagan--Swanson {\cite[Conjecture 7.9]{SaganSwanson2024}}.
\begin{theorem}\label{thm:sagan-swanson-conjecture}
    For $n \geq 1$, the polynomial 
    \begin{equation}
        \left(\sum_{k=0}^n (-q^2)^{n-k}[k]_q!S[n,k]\right)-1
    \end{equation}
    is palindromic ignoring signs, with the same number of positive and negative coefficients, and where the lower-degree half of nonzero coefficients are positive and the rest are negative.
\end{theorem}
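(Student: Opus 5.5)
Theorem~\ref{thm:sagan-swanson-conjecture} follows directly from Theorem~\ref{thm:main-theorem}. The polynomial in question is exactly $\Hilb(R_n^{(1,1)};q;u)|_{u=-q^2}-1$, so by Theorem~\ref{thm:main-theorem} it equals
\[
P_n(q)=\sum_{i=1}^n d_i q^i,\qquad d_i=\binom{n}{i}-\binom{n}{i-1}.
\]
The proof reduces to elementary facts about the sequence $d_1,\dots,d_n$.

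\textbf{Signs.} Use the ratio $\binom{n}{i}/\binom{n}{i-1}=(n-i+1)/i$. This shows that $d_i>0$ exactly when $i<(n+1)/2$, that $d_i=0$ exactly when $i=(n+1)/2$ (which can only happen for $n$ odd), and that $d_i<0$ when $i>(n+1)/2$.

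\textbf{Palindromicity up to sign.} Pair $i$ with $n+1-i$ and apply the symmetry $\binom{n}{j}=\binom{n}{n-j}$:
\[
d_{n+1-i}=\binom{n}{n+1-i}-\binom{n}{n-i}=\binom{n}{i-1}-\binom{n}{i}=-d_i .
\]
So $|d_i|=|d_{n+1-i}|$. This is palindromicity of the absolute values, centered for the support $q^1,\dots,q^n$.

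\textbf{Counting.} The involution $i\mapsto n+1-i$ on $\{1,\dots,n\}$ swaps the indices with $d_i>0$ and those with $d_i<0$, and it fixes only the possible zero middle index. Hence the numbers of positive and negative coefficients are equal. Every index with $d_i>0$ lies below every index with $d_i<0$, so the lower-degree half of the nonzero coefficients are the positive ones and the upper half are the negative ones.

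One point needs care in the reading of ``palindromic'': the zero coefficient at the middle index when $n$ is odd. Since $d_{(n+1)/2}=0$ is its own mirror image, it is harmless. The degree range $1..n$ is symmetric about $(n+1)/2$, which is the center used in the pairing above. Apart from this, Theorem~\ref{thm:main-theorem} carries all the weight, and nothing else is an obstacle.
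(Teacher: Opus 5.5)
Your proposal is correct and is essentially the paper's proof. Both arguments substitute Theorem~\ref{thm:main-theorem}, via Theorem~\ref{thm:hilbert}, observe that $d_{n+1-i}=-d_i$, and then read off the sign pattern; your ratio argument $\binom{n}{i}/\binom{n}{i-1}=(n-i+1)/i$ simply justifies the positivity and negativity claims that the paper states without detail.
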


Theorem~\ref{thm:main-theorem} treats the single specialization $u = -q^2$, but algebraic methods apply to $u = -q^m$ for every $m$. 
In Proposition~\ref{prop:H-recurrence}, we give a recurrence relation, and in Theorem~\ref{thm:H-K-closed-form}, we give a new closed-form expression for the Hilbert series specialization $\Hilb(R_n^{(1,1)};q;u)|_{u=-q^m}$.
Consequently, Theorem~\ref{thm:H-K-closed-form} also gives a second, algebraic proof of Theorem~\ref{thm:main-theorem} (as Corollary~\ref{cor:2^n-n-1}).

The specializations $u=-q^m$ are of independent interest. 
As shown by Swanson--Wallach \cite[Section 1]{SwansonWallach2}, each carries homological meaning: $\Hilb(R_n^{(1,1)};q;u)|_{u=-q^m}-1$ is the graded Euler characteristic of a certain generalized exterior derivative cochain complex on the superspace harmonics of $\mathfrak{S}_n$. 
Since the full bigraded Hilbert series is in turn determined by these specializations for $1\leq m\leq n-1$ \cite{SwansonWallach2}, an independent computation of them would give an alternative proof of Theorem~\ref{thm:hilbert}. 
However, we take Theorem~\ref{thm:hilbert} as input to describe the specializations explicitly.

The organization of the paper is as follows. 
We review background in Section~\ref{sec:background}.
We prove Theorem~\ref{thm:improved-coefficient-formula} in Section~\ref{sec:main}.
In Section~\ref{sec:SaganSwanson}, we prove Theorems~\ref{thm:main-theorem} and~\ref{thm:sagan-swanson-conjecture}. In Section~\ref{sec:generating-function-hub}, we turn our attention to the more general setting of the specialization $u=-q^m$, where we prove Proposition~\ref{prop:H-recurrence} and Theorem~\ref{thm:H-K-closed-form}.
In Section~\ref{sec:coefficients}, we study the support and determine the extreme coefficients arising from the specialization $u=-q^m$.
In Section~\ref{sec:row-gf}, we study the generating function $\sum_a c_{(a,1^b)}(n) q^a$, giving a closed-form solution in Theorem~\ref{thm:unsigned-gf}.

\section{Background}\label{sec:background}

In this section, we recall essential background on super Schur functions, $q$-Stirling numbers, and the combinatorics of ordered set partitions. 
We assume the reader has some basic familiarity with symmetric functions (see for example \cite{Macdonald} or \cite[Chapter 7]{StanleyEC2}). 

A partition $\lambda = (\lambda_1, \ldots, \lambda_{\ell(\lambda)})$ is a weakly decreasing sequence of positive integers $\lambda_1 \geq \cdots \geq \lambda_{\ell(\lambda)} > 0$; its length is $\ell(\lambda)$.
We say $\lambda$ is a partition of $n$, denoted by $\lambda \vdash n$, if $\lambda_1 + \cdots + \lambda_{\ell(\lambda)} = n$.
A partition $\lambda$ may be identified with its Ferrers diagram, which is a left-justified arrangement of $\lambda_i$ boxes in each row $i$ from $1$ to $\ell(\lambda)$.
If $\mu \subseteq \lambda$, meaning the Ferrers diagram of $\mu$ lies inside that of $\lambda$, the skew diagram $\lambda/\mu$ is obtained by deleting the boxes of $\mu$ from those of $\lambda$.
Denote a Schur function by $s_\lambda(q_1,\ldots,q_k)$ and a skew Schur function by $s_{\lambda/\mu}(q_1,\ldots,q_k)$.
A \textbf{super Schur function} $s_\lambda(q_1,\ldots,q_k/u_1,\ldots,u_j)$ is defined by 
\begin{equation}
    s_\lambda(q_1,\ldots,q_k/u_1,\ldots,u_j) = \sum_{\nu \subseteq \lambda} s_\nu(q_1,\ldots,q_k)s_{\lambda'/\nu'}(u_1,\ldots,u_j),
\end{equation}
where $\lambda'$ denotes the transpose of $\lambda$ (\!\!\cite{BereleRegev}; see also \cite[Sections A.2.2 and 3.2]{ChengWangBook}).

A partition $\lambda$ with $\lambda_2 \leq 1$ is called a \textbf{hook shape}. 
Note that $P(1,1,n)$ consists of all hook shape partitions of length at most $n$.
The super Schur function has a simple formula for hook shape partitions.

\begin{lemma}\label{lem:schur-1-1}
    If $\lambda$ is $(a,1^b)$ for some $a \geq 1$ and $b \geq 0$, then $s_\lambda(q/u) = q^au^b + q^{a-1}u^{b+1}$. 
    If $\lambda$ is $\varnothing$, then $s_\lambda(q/u) = 1$.
    If $\lambda$ is not contained in a hook shape, then $s_\lambda(q/u) = 0$.
\end{lemma}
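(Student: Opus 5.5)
We will compute directly from the definition of the super Schur function with one $q$-variable and one $u$-variable:
\[
s_\lambda(q/u)=\sum_{\nu\subseteq\lambda} s_\nu(q)\,s_{\lambda'/\nu'}(u).
\]
The first step is to recall the one-variable facts. The function $s_\nu(q)$ is nonzero only when $\nu$ has at most one row, and $s_{(r)}(q)=q^r$; this holds because a semistandard tableau with all entries equal to $1$ has at most one row. Likewise $s_{\lambda'/\nu'}(u)$ is nonzero only when $\lambda'/\nu'$ is a horizontal strip, in which case it equals $u^{|\lambda'/\nu'|}$. Transposing, this means $\lambda/\nu$ must be a vertical strip, with at most one box in each row.

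So the only surviving terms are those with $\nu=(r)$, $r\le\lambda_1$, and $\lambda/(r)$ a vertical strip. Each such term contributes $q^r u^{|\lambda|-r}$. If $\lambda\neq\varnothing$, the condition on $\lambda/(r)$ says that every row $i\ge 2$ has $\lambda_i\le 1$ and that row $1$ has $\lambda_1-r\le 1$. Hence $r\in\{\lambda_1,\lambda_1-1\}$, and we need $r\ge 0$.

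We now split into the three cases of the statement.
\begin{itemize}
\item If $\lambda=(a,1^b)$ with $a\ge1$, both $r=a$ and $r=a-1\ge0$ are allowed. They give $q^au^b+q^{a-1}u^{b+1}$, since $|\lambda|=a+b$. When $a=1$, the choice $r=0$ gives $\nu=\varnothing$, and $\lambda$ itself is a single column, which is a vertical strip, so this term is also valid.
\item If $\lambda=\varnothing$, only $\nu=\varnothing$ occurs, and the sum is $1$.
\item If $\lambda$ is not contained in a hook, then $\lambda_2\ge2$, so no $\nu$ makes $\lambda/\nu$ a vertical strip with $\nu$ a single row, and the sum is $0$.
\end{itemize}

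The only delicate point is the bookkeeping of the transpose: the horizontal-strip condition on $\lambda'/\nu'$ must be translated into the vertical-strip condition on $\lambda/\nu$, and the edge case $r=0$ must be checked. Neither is a real obstacle.
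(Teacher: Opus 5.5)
Your proof is correct: you restrict to single-row $\nu=(r)$ and translate the horizontal-strip condition on $\lambda'/\nu'$ into $\lambda_i-\nu_i\le 1$ for all $i$. This leaves exactly $r\in\{\lambda_1,\lambda_1-1\}$, forces $\lambda_2\le 1$, and handles the $r=0$ edge case properly, which settles all three cases. The paper states this lemma without proof as an immediate consequence of the Berele--Regev definition, and your argument is precisely the routine verification it leaves to the reader.
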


By taking the specialization $u=-q^m$, we obtain the following.

\begin{corollary}\label{cor:schur_specialization_-q^m=u}
    If $\lambda$ is $(a,1^b)$ for some $a \geq 1$ and $b \geq 0$, then
    \begin{equation}
        s_\lambda(q/u)|_{u=-q^m} = (-1)^bq^{a+mb}(1-q^{m-1}).
    \end{equation}
    If $\lambda$ is $\varnothing$, then $s_\lambda(q/u)|_{u=-q^m} = 1$. If $\lambda$ is not contained in a hook shape, then $s_\lambda(q/u)|_{u=-q^m} = 0$.
\end{corollary}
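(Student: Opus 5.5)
The statement to prove is Corollary~\ref{cor:schur_specialization_-q^m=u}. The plan is to substitute $u=-q^m$ directly into the three cases of Lemma~\ref{lem:schur-1-1}. No further combinatorial input is needed.

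For $\lambda=(a,1^b)$ with $a\ge 1$ and $b\ge 0$, the lemma gives $s_\lambda(q/u)=q^au^b+q^{a-1}u^{b+1}$. Setting $u=-q^m$ turns the first term into $(-1)^b q^{a+mb}$. The second term becomes $(-1)^{b+1}q^{a-1+m(b+1)}=-(-1)^b q^{a+mb}\,q^{m-1}$. Factoring out $(-1)^bq^{a+mb}$ then yields $(-1)^bq^{a+mb}(1-q^{m-1})$, which is the claimed formula.

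The other two cases need no work, since the specialization of a constant is that constant. If $\lambda=\varnothing$, the value $1$ is unchanged. If $\lambda$ is not contained in a hook, the value $0$ is unchanged.

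Nothing here is an obstacle. The only point to check is the exponent bookkeeping in the second term, namely $a-1+mb+m = a+mb+(m-1)$. It is also worth noting that for $m=1$ every hook term vanishes. This is consistent with the formula, because $1-q^0=0$.
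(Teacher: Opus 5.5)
Your proposal is correct and matches the paper's approach: the paper obtains this corollary by substituting $u=-q^m$ directly into Lemma~\ref{lem:schur-1-1}, as you do. Your exponent bookkeeping $a-1+m(b+1)=a+mb+(m-1)$ is right, and your remark on $m=1$ is a valid consistency check.
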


Let $[k]_q$ denote the $q$-integer $1+q+\cdots+q^{k-1}$. Define the $q$-factorial $[k]_q!$ by $[k]_q [k-1]_q \cdots [2]_q [1]_q$, and let $[0]_q! := 1$. 
For $0 \leq k \leq n$, define the $q$-binomial coefficients by 
\begin{equation}
    \qbinom{n}{k}_q = \frac{[n]_q!}{[k]_q! [n-k]_q!}.
\end{equation}

An \textbf{ordered set partition of $\{1,\ldots,n\}$ with $k$ blocks} is a sequence of nonempty sets $w = (S_1/ S_2/ \cdots / S_k)$ such that the disjoint union of the sets $S_i$ is $\{1,\ldots,n\}$. 
We denote the set of all ordered set partitions of $\{1,\ldots,n\}$ into $k$ blocks by $\OSP(n,k)$, and the set of all ordered set partitions of $\{1,\ldots,n\}$ by $\OSP(n)$.
Ordered set partitions were studied by Haglund, Rhoades, and Shimozono in the context of generalized coinvariant algebras \cite{HaglundRhoadesShimozono} related to the Delta conjecture (conjectured in \cite{HaglundRemmelWilson2018} and proven in \cite{DAdderioMellit}).
Subsequently, Sagan--Swanson connected them to the superspace coinvariant ring. 
Following Sagan--Swanson \cite[Section 5.1]{SaganSwanson2024}, define the \textbf{inversion set} on ordered set partitions by
\begin{equation}
    \Inv(w) = \{(s,S_j)\, | \, s \in S_i \text{ for some } i < j \text{ and } s \geq \min S_j\},
\end{equation}
and the \textbf{inversion statistic} by $\inv(w) = \# \Inv(w)$.

Recall the \textbf{$q$-Stirling numbers of the second kind} $S[n,k]$, which are defined by the initial conditions $S[0,k] = \delta_{0,k}$ and $S[n,0] = \delta_{n,0}$, along with the recurrence relation
\begin{equation}\label{eq:q-Stirling-def}
    S[n,k] = S[n-1,k-1] + [k]_q S[n-1,k]
\end{equation}
for $1 \leq k \leq n$.

The \textbf{$q$-ordered Stirling numbers of the second kind} $S^o[n,k]$ are defined by $[k]_q!S[n,k]$. 
They satisfy the recurrence relation
\begin{equation}
    S^o[n,k] = [k]_q \left(S^o[n-1,k-1] + S^o[n-1,k]\right),
\end{equation}
for $1 \leq k \leq n$, with initial conditions $S^o[n,0]=\delta_{n,0}$ and $S^o[0,k]=\delta_{k,0}$, and the convention $S^o[n,k] := 0$ for $k > n$.

We will use the following expression for $S^o[n,k]$.
\begin{lemma}\label{lem:SO-closed-form} 
For $0 \leq k \leq n$,
    \begin{equation}
S^o[n,k]= q^{-\binom{k}{2}}\sum_{j=0}^k (-1)^j q^{\binom{j}{2}} \qbinom{k}{j}_q [k-j]_q^n.
\end{equation}
\end{lemma}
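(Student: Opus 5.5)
The formula is standard, and I would prove it by checking that the right-hand side satisfies the recurrence and initial conditions that define $S^o[n,k]$. Set
\[
T[n,k] := q^{-\binom{k}{2}}\sum_{j=0}^k (-1)^j q^{\binom{j}{2}} \qbinom{k}{j}_q [k-j]_q^n .
\]
Rather than check the recurrence $S^o[n,k]=[k]_q(S^o[n-1,k-1]+S^o[n-1,k])$ directly, which mixes two values of $k$, I would go through generating functions in $n$. From the recurrence \eqref{eq:q-Stirling-def} for $S[n,k]$ one gets the classical identity
\[
\sum_{n\ge 0} S[n,k]\, x^n = \frac{x^k}{\prod_{i=1}^k (1-[i]_q x)} .
\]
Indeed, multiplying \eqref{eq:q-Stirling-def} by $x^n$ and summing gives $F_k(x)(1-[k]_q x) = xF_{k-1}(x)$ with $F_0=1$.

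The main step is to show that $[k]_q!\,F_k(x)$ equals $\sum_n T[n,k]x^n$. The latter is
\[
q^{-\binom{k}{2}}\sum_{j=0}^{k} (-1)^j q^{\binom{j}{2}}\qbinom{k}{j}_q \frac{1}{1-[k-j]_q x}.
\]
I would decompose $x^k/\prod_{i=1}^k(1-[i]_qx)$ into partial fractions in $x$. The poles are simple since the $[i]_q$ are distinct, and we write
\[
\frac{x^k}{\prod_{i=1}^k(1-[i]_qx)} = C_0 + \sum_{i=1}^k \frac{C_i}{1-[i]_qx}.
\]
Each residue-type coefficient is then
\[
C_i = \frac{[i]_q^{-k}}{\prod_{l\ne i}(1-[l]_q/[i]_q)} = \frac{1}{\prod_{l\ne i}([i]_q-[l]_q)}\cdot\frac{1}{[i]_q}.
\]
To simplify this, use $[i]_q-[l]_q = q^{l}[i-l]_q$ for $l<i$ and $[i]_q-[l]_q=-q^{i}[l-i]_q$ for $l>i$. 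The product then collapses to
\[
(-1)^{k-i}\, q^{\binom{i}{2}+i(k-i)}\,[i-1]_q!\,[k-i]_q!,
\]
which gives
\[
C_i = \frac{(-1)^{k-i}\qbinom{k}{i}_q}{[k]_q!}\, q^{-\binom{i}{2}-i(k-i)} .
\]
Substituting $j=k-i$, the exponent becomes $-\binom{k-j}{2}-j(k-j) = \binom{j}{2}-\binom{k}{2}$, which matches the $j$-th term of $T$ after multiplying by $[k]_q!$.

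The $j=k$ term of $T$ (where $[0]_q=0$) is the constant $(-1)^kq^{\binom{k}{2}}q^{-\binom{k}{2}}=(-1)^k$, contributing only to $n=0$. It should match $[k]_q!\,C_0$, where $C_0=\lim_{x\to\infty}$ of the rational function, namely $(-1)^k/\prod_i[i]_q$. So $[k]_q!\,C_0=(-1)^k$, which agrees.

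Comparing coefficients of $x^n$ then gives $T[n,k]=S^o[n,k]$ for all $n$ (including $n<k$, where both vanish), so certainly for $0\le k\le n$. I expect the only delicate point to be the bookkeeping of $q$-powers in $\prod_{l\ne i}([i]_q-[l]_q)$; everything else is routine. I would sanity-check the result at $k=1,2$.
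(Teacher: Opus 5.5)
Your proof is correct, and it takes a genuinely different route from the paper.

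The paper's proof rests on two cited results. It starts from Carlitz's identity $[r]_q^n=\sum_{k}\qbinom{r}{k}_q q^{\binom{k}{2}}S^o[n,k]$, a $q$-analogue of counting maps $\{1,\ldots,n\}\to\{1,\ldots,r\}$ by image size. It then applies $q$-binomial (Gauss) inversion in $r$, which is a $q$-inclusion--exclusion.

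You instead work directly from the defining recurrence \eqref{eq:q-Stirling-def}. You obtain $\sum_{n\ge 0}S^o[n,k]x^n=[k]_q!\,x^k/\prod_{i=1}^k(1-[i]_qx)$ and read off the formula from a partial-fraction expansion over $\mathbb{Q}(q)$. This is the $q$-analogue of the classical partial-fraction derivation of the explicit Stirling formula. I checked the delicate points and they hold:
\begin{itemize}
\item the product $\prod_{l\ne i}([i]_q-[l]_q)=(-1)^{k-i}q^{\binom{i}{2}+i(k-i)}[i-1]_q!\,[k-i]_q!$;
\item the exponent identity $\binom{k}{2}=\binom{k-j}{2}+\binom{j}{2}+j(k-j)$;
\item the polynomial part $C_0=(-1)^k/[k]_q!$, which matches the $j=k$ summand under the convention $[0]_q^0=1$.
\end{itemize}
Because the denominators have constant term $1$, the equality of rational functions passes to coefficients of $x^n$.

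Your route is self-contained, since it needs nothing beyond the recurrence. It also shows in passing that the right-hand side vanishes for $n<k$, and it produces the rational generating function in $n$. The cost is the $q$-power bookkeeping you flagged. The paper's route is much shorter and explains the alternating sum as the inversion of a positive identity, but it relies on external results.
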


\begin{proof}
    Carlitz's identity (originally \cite{Carlitz1948}; see \cite[Corollary 5.2]{YueEhrenborgReaddy}) says $[r]_q^n = \sum_{k=0}^n \qbinom{r}{k}_q q^{\binom{k}{2}} S^o[n,k]$.
    The result is immediate upon applying $q$-binomial (Gauss) inversion \cite[Theorem 1.2]{Ernst}.
\end{proof}

We recall the following generating function for $q$-ordered Stirling numbers of the second kind, in terms of the inversion statistic.
\begin{proposition}[\!\!{\cite[Theorem 5.1]{SaganSwanson2024}}]\label{prop:OSP-inv}
For $0 \leq k \leq n$,
    \begin{equation}
        [k]_q! S[n,k] = \sum_{w \in \OSP(n,k)}q^{\inv(w)}.
    \end{equation}
\end{proposition}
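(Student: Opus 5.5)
We prove the identity by induction on $n$, showing that the right-hand side $\sum_{w \in \OSP(n,k)} q^{\inv(w)}$ satisfies the same recurrence and initial conditions as $S^o[n,k] = [k]_q!S[n,k]$, namely
\begin{equation*}
S^o[n,k] = [k]_q\left(S^o[n-1,k-1] + S^o[n-1,k]\right),
\end{equation*}
with $S^o[n,0]=\delta_{n,0}$ and $S^o[0,k]=\delta_{k,0}$. The initial conditions are immediate. $\OSP(0,0)$ consists of the empty ordered set partition, which has $\inv = 0$. $\OSP(n,0)$ is empty for $n\geq 1$, and $\OSP(0,k)$ is empty for $k \geq 1$.

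For the recurrence, fix $w = (S_1/\cdots/S_k) \in \OSP(n,k)$ with $n \geq 1$, and let $i$ be the index with $n \in S_i$. We split into two cases according to whether $S_i = \{n\}$ or $|S_i| \geq 2$. In either case, we first record the inversions that involve $n$.

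\emph{Inversions with $n$ as the element.} For $j > i$ we always have $n \geq \min S_j$, so each later block $S_j$ gives the inversion $(n, S_j)$. Thus $n$ contributes exactly $k-i$ inversions of this kind.

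\emph{Inversions with $n$ as the minimum.} The element $n$ can equal $\min S_j$ only when $S_i=\{n\}$. Even then, an inversion $(s,\{n\})$ would require some $s \neq n$ with $s \geq n$, which is impossible. So there are no inversions of this kind.

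In the first case, $S_i=\{n\}$. Deleting this block gives $w' \in \OSP(n-1,k-1)$. Every inversion $(s,S_j)$ with $s \neq n$ and $S_j \neq \{n\}$ is unchanged, because the relative order of the remaining blocks and their minima are preserved. Hence $\inv(w) = \inv(w') + (k-i)$. Conversely, inserting $\{n\}$ into any of the $k$ positions of any $w'$ is a bijection onto this case. So this case contributes $\sum_{i=1}^k q^{k-i}\, S^o[n-1,k-1] = [k]_q\, S^o[n-1,k-1]$.

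In the second case, $|S_i| \geq 2$. Removing $n$ from $S_i$ gives $w' \in \OSP(n-1,k)$. The minimum of every block is unchanged, since $n$ is never the minimum of a block with at least two elements. Hence all inversions not involving $n$ are preserved, and $\inv(w) = \inv(w') + (k-i)$. Conversely, adding $n$ to any one of the $k$ blocks of any $w'$ is a bijection onto this case. So this case contributes $[k]_q\, S^o[n-1,k]$.

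Summing the two cases gives the recurrence, which completes the induction. The only delicate point is verifying that deleting $n$ changes no inversion other than the $k-i$ inversions $(n,S_j)$. This rests on the two observations above: $n$ is never the minimum of a non-singleton block, and a singleton block $\{n\}$ can never be the second coordinate of an inversion.
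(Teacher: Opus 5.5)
Your proof is correct: the two observations (that $n$ is never the minimum of a block with at least two elements, and that a singleton $\{n\}$ can never be the second coordinate of an inversion) give $\inv(w)=\inv(w')+(k-i)$ in both cases, and the recurrence for $S^o[n,k]$ then closes the induction. The paper does not prove this proposition (it cites Sagan--Swanson), but your deletion of $n$ is exactly the last step of the $(B,A)$ insertion encoding in Section~\ref{sec:main}, with $n\in B$ iff $|S_i|\geq 2$ and $a_n=k-i$; iterating your argument gives $\inv(w)=\sum_i a_i$, so the underlying idea is essentially the same.
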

Let $\langle q^i\rangle f(q)$ denote the coefficient of $q^i$ in a power series $f(q)$. 
Coefficient-wise, the proposition says
\begin{equation}\label{eq:OSP-equation}
    \langle q^i \rangle [k]_q! S[n,k] = \#\{w \in \OSP(n,k)\, | \, \inv(w) = i \}.
\end{equation}

Now we recall the \textbf{involution $\phi$ of Sagan--Swanson} \cite[Section 5.1]{SaganSwanson2024}.
Given an ordered set partition $w=(S_1/S_2/\ldots /S_k)$, we call $w$ \textbf{splittable} at $M = \max S_i$, for a fixed $i \in\{1,\ldots,k\}$, if $|S_i|>1$; we call $w$ \textbf{mergeable} at $M= \max S_i$, for a fixed $i \in\{1,\ldots,k-1\}$, if $|S_i|=1$ and $M>\max S_{i+1}$.

Define a function $\phi$ on ordered set partitions of $\{1,\ldots,n\}$.
Given $w=(S_1/S_2/\ldots /S_k)$, find the largest $M$ such that $w$ is either splittable or mergeable at $M$, if such an $M$ exists.
We say that
\begin{itemize}
    \item $w$ is \textbf{type I} if $w$ is mergeable at $M$;
    \item $w$ is \textbf{type II} if $w$ is splittable at $M$;
    \item $w$ is \textbf{type III} if no $M$ exists.
\end{itemize}
Then 
\begin{equation}\label{eq:phi-mergeable-splittable}
\phi(w)=\begin{cases}
   (S_1/S_2/\ldots /S_{i-1}/S_i\cup S_{i+1}/S_{i+2}/\ldots  /S_k) & \text{if $w$ is type I}, \\
   (S_1/S_2/\ldots /S_{i-1}/\{M\}/S_i-\{M\}/S_{i+1}/\ldots  /S_k) & \text{if $w$ is type II}, \\
   w & \text{if $w$ is type III}.
\end{cases}
\end{equation}
It follows that $\phi$ is an involution \cite[Section 5]{SaganSwanson2024}.
Let $\blocks(w)$ denote the number of blocks of $w$.
As merging removes one inversion and one block while splitting adds one of each, the quantity $n-\blocks(w)+\inv(w)$ is preserved by $\phi$.
Finally, with the definition $\sgn(w) = (-1)^{n- \blocks(w)}$, $\phi$ is a sign-reversing involution on all non-fixed points.

\section{Formulas for the coefficients \texorpdfstring{$c_{(a,1^b)}(n)$}{c(a,1b)(n)}}\label{sec:main}

In this section, we work towards finding a manifestly positive combinatorial formula for the coefficients $c_{(a,1^b)}(n)$ in order to establish Theorem~\ref{thm:improved-coefficient-formula}. 
First, we express the coefficients in terms of a signed enumeration of certain ordered set partitions.

\begin{lemma}\label{lem:c_{(a,1^b)}}
    For $a \geq 1$ and $b \geq 0$, the coefficients $c_{(a,1^b)}(n)$ from equation~(\ref{eq:coeffs-Hilbert}) may be computed by
    \begin{equation}\label{eq:first-formula-coeffs}
        c_{(a,1^b)}(n) = \sum_{i=0}^b (-1)^i \#\{w \in \OSP(n,n-b+i)\, | \, \inv(w) = a+i \}.
    \end{equation}
\end{lemma}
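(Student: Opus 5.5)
We compare the coefficient of the monomial $q^a u^b$ on the two sides of equation~\eqref{eq:coeffs-Hilbert}. On the left, Theorem~\ref{thm:hilbert} together with Proposition~\ref{prop:OSP-inv} gives
\[
\langle q^a u^b\rangle \Hilb(R_n^{(1,1)};q;u) = \langle q^a\rangle [n-b]_q!\,S[n,n-b] = \#\{w\in\OSP(n,n-b) \mid \inv(w)=a\}.
\]
Call this quantity $N(a,b)$. It is understood to be $0$ when $b>n$ or $n-b<0$.

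On the right, Lemma~\ref{lem:schur-1-1} says $s_{(a',1^{b'})}(q/u) = q^{a'}u^{b'} + q^{a'-1}u^{b'+1}$. The empty partition contributes only to the constant term. So for $a\ge 1$ the monomial $q^a u^b$ receives contributions from exactly two hooks: $(a,1^b)$, and $(a+1,1^{b-1})$ when $b\ge 1$. This gives
\[
N(a,b) = c_{(a,1^b)}(n) + c_{(a+1,1^{b-1})}(n) \qquad (a\ge1,\ b\ge 1),
\]
and $N(a,0)=c_{(a)}(n)$. Here we use the convention that $c_\lambda(n)=0$ for hooks with $\ell(\lambda)>n$, consistent with $\lambda\in P(1,1,n)$.

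Next we unwind this recurrence by induction on $b$. The relation rearranges to $c_{(a,1^b)}(n) = N(a,b) - c_{(a+1,1^{b-1})}(n)$. Iterating it $b$ times gives
\[
c_{(a,1^b)}(n)=\sum_{i=0}^{b}(-1)^i N(a+i,\,b-i).
\]
The base case $b=0$ is $N(a,0)=c_{(a)}(n)$. The inductive step applies the hypothesis to $(a+1,1^{b-1})$, which is legitimate because $a+1\ge 1$. Substituting $N(a+i,b-i)=\#\{w\in\OSP(n,n-b+i)\mid \inv(w)=a+i\}$ yields equation~\eqref{eq:first-formula-coeffs}.

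The only delicate point is the boundary bookkeeping. First, a hook $(a,1^b)$ with $b\ge n$ has length greater than $n$, so its coefficient is zero. In that range the right-hand side should also vanish: for $b>n$ the sets $\OSP(n,n-b+i)$ are empty whenever $n-b+i<0$. The remaining terms then telescope exactly as in the case of a length exceeding $n$ on the Hilbert series side. This is consistent because the coefficient identity for $q^au^b$ holds for every $b$, with both sides zero once $b\ge n$. In any case the lemma is only needed for hooks in $P(1,1,n)$. Second, the constant term and the fact that $a\ge1$ ensure that no hook with $a=0$ ever enters the recurrence.
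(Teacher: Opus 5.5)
Your proposal is correct and follows the paper's proof: you extract the coefficient of $q^au^b$ from both sides using Theorem~\ref{thm:hilbert}, Proposition~\ref{prop:OSP-inv}, and Lemma~\ref{lem:schur-1-1}, which gives $c_{(a,1^b)}(n)+c_{(a+1,1^{b-1})}(n)=\#\{w\in\OSP(n,n-b)\mid\inv(w)=a\}$ for $b\geq 1$, and then you unwind this recurrence in $b$. Your explicit induction and boundary remarks simply spell out the ``solve for $c_{(a,1^b)}(n)$'' step that the paper leaves implicit.
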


\begin{proof}
By equations~(\ref{eq:coeffs-Hilbert}) and~(\ref{eq:hilbert-rhoades-wilson}), we have that
\begin{equation}
    \sum_{\lambda \in P(1,1,n)} c_\lambda(n) s_\lambda(q/u) = \sum_{d=0}^n [d]_q!S[n,d] u^{n-d}.
\end{equation}
Then by Lemma~\ref{lem:schur-1-1}, we have
\begin{equation}
    1 + \sum_{\substack{a \geq 1,\\ b \geq 0}} c_{(a,1^b)}(n)(q^au^b+q^{a-1}u^{b+1}) = \sum_{d=0}^n [d]_q!S[n,d] u^{n-d}.
\end{equation}
By equation~(\ref{eq:OSP-equation}),
\begin{equation}
    1 + \sum_{\substack{a \geq 1,\\ b \geq 0}} c_{(a,1^b)}(n)(q^au^b+q^{a-1}u^{b+1}) = \sum_{d=0}^n  \sum_{i \geq 0} q^i \# \{w \in \OSP(n,d)\, |\, \inv(w) = i \} u^{n-d}.
\end{equation}
By extracting the coefficient of $q^a u^b$ from both sides, we find that, for $b \geq 1$,
\begin{equation}
    c_{(a,1^b)}(n)+c_{(a+1,1^{b-1})}(n) = \#\{w\in \OSP(n,n-b)\ | \ \inv(w)=a\}
\end{equation}
and for $b=0$,
\begin{equation}
    c_{(a)}(n) = \#\{w\in \OSP(n,n)\ | \ \inv(w)=a\}.
\end{equation}
Finally, we solve for $c_{(a,1^b)}(n)$ to establish equation~(\ref{eq:first-formula-coeffs}).
\end{proof}

We wish to improve upon Lemma~\ref{lem:c_{(a,1^b)}} to get a manifestly positive formula for the coefficients $c_{(a,1^b)}(n)$. 
To do so, first we will review the bijection between ordered set partitions and certain pairs of subsets and inversion sequences (which themselves correspond to monomials in the super-Artin basis for $R_n^{(1,1)}$ \cite{Angarone2024}).

For $0 \leq k \leq n-1$, define $\mathscr{B}(n,k)$ to be the set of all $k$-element subsets of $\{2,3,\ldots,n\}$.
To a $B \in \mathscr{B}(n,k)$, we may associate any inversion sequence $A = (a_1,\ldots, a_n)$, where the $a_i$ satisfy $0 \leq a_i < i -|B \cap \{1,\ldots,i\}|$.

There is a bijection between $\OSP(n,n-k)$ and all pairs $(B,A)$ where $B \in \mathscr{B}(n,k)$ and $A$ is an inversion sequence for $B$. 
We describe the bijection starting from a pair $(B,A)$ (by specializing the map from \cite[Section 6]{Lentfer2024}).
Initialize an ordered set partition as ``$\{1\}$.'' 
For $i$ ranging from $2$ to $n$, do the following:
\begin{enumerate}[(a)]
    \item If $i \not\in B$, insert ``$/\{i\}$'' or ``$\{i\}/$'' in such a way as to create a new block consisting of the set $\{i\}$ at block position $a_i$ from the right (indexing starting at 0);
    \item If $i \in B$, take the union of $\{i\}$ with the existing block at block position $a_i$ from the right.
\end{enumerate}
After this has been performed up through $i=n$, the output is an ordered set partition $w$.
Identify the two forms $(B,A)$ and $w$ under this bijection.
In particular, $B$ has cardinality $n-\blocks(w)$ and $A$ satisfies $\sum_{i=1}^n a_i = \inv(w)$.

\begin{example}
    We give an example of the bijection between pairs $(B,A)$ and ordered set partitions at $n=3$.
    \begin{itemize}
    \item Consider $\mathscr{B}(3,0) =\big\{\varnothing\big\}$. 
    Associated to $B = \varnothing$, there are inversion sequences $A = (0,0,0)$, $(0,1,0)$, $(0,0,1)$, $(0,1,1)$, $(0,0,2)$, and $(0,1,2)$. 
    We follow the procedure to create ordered set partitions.
    \begin{itemize}
        \item For $A = (0,0,0)$, we build $\{1\}$, then $\{1\}/\{2\}$, then finally $\{1\}/\{2\}/\{3\}$.
        \item For $A = (0,1,0)$, we build $\{1\}$, then $\{2\}/\{1\}$, then finally $\{2\}/\{1\}/\{3\}$.
        \item For $A = (0,0,1)$, we build $\{1\}$, then $\{1\}/\{2\}$, then finally $\{1\}/\{3\}/\{2\}$.
        \item For $A = (0,1,1)$, we build $\{1\}$, then $\{2\}/\{1\}$, then finally $\{2\}/\{3\}/\{1\}$.
        \item For $A = (0,0,2)$, we build $\{1\}$, then $\{1\}/\{2\}$, then finally $\{3\}/\{1\}/\{2\}$.
        \item For $A = (0,1,2)$, we build $\{1\}$, then $\{2\}/\{1\}$, then finally $\{3\}/\{2\}/\{1\}$.
    \end{itemize}
    \item Consider $\mathscr{B}(3,1) =\big\{\{2\},\{3\}\big\}$.
    Associated to $B = \{2\}$, there are inversion sequences $A = (0,0,0)$ and $(0,0,1)$.  
    We follow the procedure to create ordered set partitions.
    \begin{itemize}
        \item For $A = (0,0,0)$, we build $\{1\}$, then $\{1,2\}$, then finally $\{1,2\}/\{3\}$.
        \item For $A = (0,0,1)$, we build $\{1\}$, then $\{1,2\}$, then finally $\{3\}/\{1,2\}$.
    \end{itemize}
    Associated to $B = \{3\}$, there are inversion sequences $A = (0,0,0)$, $(0,1,0)$, $(0,0,1)$, and $(0,1,1)$.
    We follow the procedure to create ordered set partitions.
    \begin{itemize}
        \item For $A = (0,0,0)$, we build $\{1\}$, then $\{1\}/\{2\}$, then finally $\{1\}/\{2,3\}$.
        \item For $A = (0,1,0)$, we build $\{1\}$, then $\{2\}/\{1\}$, then finally $\{2\}/\{1,3\}$.
        \item For $A = (0,0,1)$, we build $\{1\}$, then $\{1\}/\{2\}$, then finally $\{1,3\}/\{2\}$.
        \item For $A = (0,1,1)$, we build $\{1\}$, then $\{2\}/\{1\}$, then finally $\{2,3\}/\{1\}$.
    \end{itemize}
    \item Consider $\mathscr{B}(3,2) =\big\{\{2,3\}\big\}$.
    Associated to $B = \{2,3\}$, there is one inversion sequence $A = (0,0,0)$.
    We follow the procedure to create an ordered set partition.
    \begin{itemize}
        \item For $A = (0,0,0)$, we build $\{1\}$, then $\{1,2\}$, then finally $\{1,2,3\}$.
    \end{itemize}
    \end{itemize}
\end{example}

We explain the involution $\phi$ of Sagan--Swanson on the ordered set partitions under the aforementioned bijection.
Given an ordered set partition $w=(B,A)$ in $\OSP(n)$, we say that $w$ is of 
\begin{itemize}
    \item \textbf{type I} if there exists an index $j$ such that $a_j>0$ and $a_i=0$ for all $i>j$ and $i\notin B$ for all $i\ge j$;
    \item \textbf{type II} if there exists an index $j$ such that $j\in B$ and $a_i=0$ for all $i>j$ and $i\notin B$ for all $i>j$;
    \item \textbf{type III} if no such $j$ exists (i.e., if $a_i=0$ for $1\le i\le n$ and $B=\emptyset$).
\end{itemize}
Once such a $j$ is determined, the involution $\phi$ is then given by: 
\begin{itemize}
\item If $w$ is of type I then $\phi(w)=(\tilde B,\tilde A)$ where $\tilde B=B\cup\{j\}$ and $\tilde A$ is given by $\tilde a_i=a_i$ for all $i\neq j$ and $\tilde a_j=a_j-1$. Then $\phi(w)$ is of type II.
\item If $w$ is of type II then $\phi(w)=(\tilde B,\tilde A)$ where $\tilde B=B\backslash\{j\}$ and $\tilde A$ is given by $\tilde a_i=a_i$ for all $i\neq j$ and $\tilde a_j=a_j+1$. Then $\phi(w)$ is of type I.
\item If $w$ is of type III then $\phi(w)=w$ and it is a fixed point.
\end{itemize}

\begin{example}
    We classify all 13 ordered set partitions at $n=3$ as type I, II, or III and show the effect of the involution $\phi$.
    The following six pairs have the type I ordered set partition on the left and the type II ordered set partition on the right:
    \begin{itemize}
        \item $\big(\varnothing, (0,1,0)\big) \xlongleftrightarrow[]{\phi} \big(\{2\}, (0,0,0)\big)$.
        \item $\big(\varnothing, (0,0,1)\big) \xlongleftrightarrow[]{\phi} \big(\{3\}, (0,0,0)\big)$.
        \item $\big(\varnothing, (0,1,1)\big) \xlongleftrightarrow[]{\phi} \big(\{3\}, (0,1,0)\big)$.
        \item $\big(\varnothing, (0,0,2)\big) \xlongleftrightarrow[]{\phi} \big(\{3\}, (0,0,1)\big)$.
        \item $\big(\varnothing, (0,1,2)\big) \xlongleftrightarrow[]{\phi} \big(\{3\}, (0,1,1)\big)$.     
        \item $\big(\{2\}, (0,0,1)\big) \xlongleftrightarrow[]{\phi} \big(\{2,3\}, (0,0,0)\big)$.        
    \end{itemize}
    There is one type III ordered set partition:
    \begin{itemize}
        \item $\big(\varnothing, (0,0,0)\big)$.
    \end{itemize}
\end{example}

For $\ell \geq 1$ and $0 \leq b \leq {\lfloor \frac{\ell-1}{2}\rfloor}$, define the set
\begin{equation}
\mathcal S_{n,\ell,b}=\bigcup_{c=0}^b  \{w\in\OSP(n,n-c)\, |\, \inv(w)=\ell-b-c\}.
\end{equation}
While $\phi$ is an involution on $\OSP(n)$, it does not generally fix $\mathcal S_{n,\ell,b}$. 
We describe when points of $\mathcal S_{n,\ell,b}$ remain or leave the set under $\phi$ in the following lemma.
\begin{lemma}\label{lem:pairing-lemma}
    If $w \in \mathcal S_{n,\ell,b}$, then:
\begin{enumerate}[(i)]
    \item if $w$ is of type I, $w \in \OSP(n,n-c)$ and $\inv(w) = \ell-2c$, then $\phi(w) \not\in \mathcal S_{n,\ell,b}$;
    \item if $w$ is otherwise of type I, then $\phi(w) \in \mathcal S_{n,\ell,b}$;
    \item if $w$ is of type II, then $\phi(w) \in \mathcal S_{n,\ell,b}$;
    \item $w$ cannot be of type III.
\end{enumerate}
\end{lemma}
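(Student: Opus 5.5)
The plan is to track the effect of $\phi$ on the pair of invariants $(c,\inv)$, where $w\in\OSP(n,n-c)$, so that $c=n-\blocks(w)=|B|$. Membership in $\mathcal S_{n,\ell,b}$ means exactly that $0\le c\le b$ and $\inv(w)=\ell-b-c$. In the $(B,A)$ description, a type I element has $\phi$ adding $j$ to $B$ and lowering $a_j$ by one. Thus $(c,\inv)\mapsto(c+1,\inv-1)$. A type II element has $\phi$ removing $j$ from $B$ and raising $a_j$ by one, so $(c,\inv)\mapsto(c-1,\inv+1)$. In both cases $c+\inv$ is preserved; this is the invariance of $n-\blocks(w)+\inv(w)$ already noted after the definition of $\phi$. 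So $\phi(w)$ satisfies the inversion condition automatically once its $c$-value lies in $[0,b]$, and the whole lemma reduces to checking the range of $c$.

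For (iii): if $w$ is type II, then $c\ge 1$ because $j\in B$. Hence $\phi(w)$ has $c-1\in[0,b-1]$ and $\inv(\phi(w))=\ell-b-(c-1)$, so $\phi(w)\in\mathcal S_{n,\ell,b}$.

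For (i) and (ii): if $w$ is type I, $\phi(w)$ lies in $\mathcal S_{n,\ell,b}$ exactly when $c+1\le b$, and fails exactly when $c=b$. When $c=b$ we have $\inv(w)=\ell-b-c=\ell-2c$. Conversely, if $\inv(w)=\ell-2c$, then comparing with $\inv(w)=\ell-b-c$ forces $c=b$. So the condition ``$\inv(w)=\ell-2c$'' in (i) is precisely $c=b$, i.e.\ the boundary case where $\phi(w)$ would land in $\OSP(n,n-b-1)$, outside the union. In every other type I case, $c<b$, and $\phi(w)\in\OSP(n,n-c-1)$ with $\inv(\phi(w))=\ell-b-(c+1)$, so it stays in the set. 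I expect this identification of the boundary case to be the only delicate point. It is worth double-checking against the block-merge description: merging reduces the block count by one and removes exactly one inversion.

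For (iv): a type III element has $B=\varnothing$ and $A=0$, so $c=0$ and $\inv=0$. Membership in $\mathcal S_{n,\ell,b}$ would then force $\ell=b$. But $\ell\ge1$ and $b\le\lfloor(\ell-1)/2\rfloor<\ell$, a contradiction.
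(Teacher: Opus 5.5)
Your proposal is correct and follows essentially the same argument as the paper. Both use that $\phi$ shifts $(c,\inv)$ by $(+1,-1)$ on type I and by $(-1,+1)$ on type II, so membership fails only at the type I boundary $c=b$ (equivalently $\inv(w)=\ell-2c$), and a type III element would force $\ell=b$. Your range check on $c$ via the invariant $c+\inv$ is just tidier bookkeeping of the paper's window $\ell-2b\le\inv(w)\le\ell-b$.
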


\begin{proof}
Let $b$ be fixed. The condition on $\inv(w)$ for $\mathcal S_{n,\ell,b}$ requires that $\ell - 2b \leq \inv(w) \leq \ell-b$, by considering the values $c$ can take on.
\begin{enumerate}[(i)]
    \item If $w$ is of type I, $w \in \OSP(n,n-c)$ and $\inv(w) = \ell-2c$, this means that $c=b$, so $\inv(w) = \ell-2b$. As $\phi$ applied to an ordered set partition of type I lowers the number of inversions by 1, $\inv(\phi(w)) = \ell-2b-1$, so $\phi(w) \not \in \mathcal S_{n,\ell,b}$.
    \item If $w$ is otherwise of type I, then $c \leq b-1$. Since here $\phi$ removes one block and one inversion, we get that $\inv(\phi(w))= \ell - b - c -1 \geq \ell-2b$. Thus $\phi(w) \in \mathcal S_{n,\ell,b}$.
    \item If $w$ is of type II, then $\blocks(w) \leq n-1$, so $c \geq 1$, which implies that $\inv(w) \leq \ell-b-1$. 
    Since here $\phi$ adds one block and one inversion, $\inv(\phi(w)) \leq \ell - b$, so $\phi(w) \in \mathcal S_{n,\ell,b}$.
    \item Suppose towards a contradiction that  $w \in \mathcal S_{n,\ell,b}$ is of type III, then $c=0$ and $\ell = b$. Since $\ell \geq 1$, then $b > {\lfloor \frac{\ell-1}{2}\rfloor}$, contradicting that $b \leq {\lfloor \frac{\ell-1}{2}\rfloor}$.
\end{enumerate}
\end{proof}

Let us collect those ordered set partitions in $\mathcal S_{n,\ell,b}$ which are not paired under $\phi$ as 
\begin{equation}
    \mathcal T_{n,\ell,b}=
\{w\in \OSP(n,n-b) \, |\, \text{$w$ of type I and}\ \inv(w)=\ell-2b\},
\end{equation}
and define
\begin{equation}
    \mathcal T_{n,\ell} = \bigcup_{b=0}^{\lfloor\frac{\ell-1}{2}\rfloor} \mathcal T_{n,\ell,b}.
\end{equation}

Now we are ready to prove our combinatorial interpretation of the coefficients $c_{(a,1^b)}(n)$.

\begin{proof}[Proof of Theorem~\ref{thm:improved-coefficient-formula}]
By Lemma~\ref{lem:c_{(a,1^b)}}, we wish to enumerate the points of 
\begin{equation}
\mathcal S_{n,a+2b,b} = \bigcup_{i=0}^b \{ w\in \OSP(n,n-b+i)\, | \, \inv(w) = a+i\}
\end{equation}
which are not paired under the involution $\phi$ of Sagan--Swanson.
This is because the points which are paired under $\phi$ have opposite signs and cancel in the signed sum of Lemma~\ref{lem:c_{(a,1^b)}}.
By Lemma~\ref{lem:pairing-lemma}, those points in $\mathcal S_{n,a+2b,b}$ which are not paired under $\phi$ are
\begin{equation}
    \mathcal T_{n,a+2b,b} = \{w \in \OSP(n,n-b)\, | \, \inv(w) = a, \ w \text{ is type I} \}.
\end{equation}
Since each unpaired point lies in $\OSP(n,n-b)$, it is counted in the $i=0$ term of equation~\eqref{eq:first-formula-coeffs}, with sign $+1$. 
This completes the proof.
\end{proof}

The combinatorial formula of Theorem~\ref{thm:improved-coefficient-formula} lets us describe exactly which coefficients $c_{(a,1^b)}(n)$ are nonzero.
Regarding the empty partition, we have $c_\varnothing(n) = 1$.

\begin{proposition}\label{prop:nonzero-support}
    Fix $n \geq 1$, and let $a \geq 1$ and $b \geq 0$.
    Then $c_{(a,1^b)}(n) > 0$ if and only if
    \begin{equation}\label{eq:support}
        0 \leq b \leq n-2 \quad \text{and} \quad 1 \leq a \leq \binom{n}{2} - \binom{b+1}{2} - b.
    \end{equation}
\end{proposition}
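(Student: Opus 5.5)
The plan is to read off the support directly from Theorem~\ref{thm:improved-coefficient-formula}, using the $(B,A)$ description of ordered set partitions and of type I. By that theorem, $c_{(a,1^b)}(n)>0$ exactly when some type I $w\in\OSP(n,n-b)$ has $\inv(w)=a$. Under the bijection, such a $w$ is a pair $(B,A)$ with $B\in\mathscr{B}(n,b)$ and $\sum_i a_i=a$. Being type I means there is an index $j$ with $j\notin B$, $a_j>0$, and both $a_i=0$ and $i\notin B$ for all $i>j$. Thus a type I element is the same as a choice of the following data:
\begin{itemize}
\item an index $j$;
\item a $b$-subset $B\subseteq\{2,\ldots,j-1\}$;
\item entries $0\le a_i\le m_i:=i-1-|B\cap\{1,\ldots,i\}|$ for $i<j$;
\item an entry $1\le a_j\le m_j$;
\item $a_i=0$ for $i>j$.
\end{itemize}
Since $j\notin B$ and $|B|=b$, we have $m_j=j-1-b$, so the data exist only when $j\ge b+2$.

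The first step is to show that, for fixed admissible $(j,B)$, the attainable values of $\inv=\sum a_i$ form the whole interval $[1,M(j,B)]$, where $M(j,B)=\sum_{i\le j}m_i$. This holds because the $a_i$ range independently over integer intervals, so their sum takes every value between its minimum and maximum, and the minimum is $1$. Every admissible $(j,B)$ attains the value $1$, so the set of $a$ with $c_{(a,1^b)}(n)>0$ is $[1,M^*]$, where $M^*$ is the maximum of $M(j,B)$ over admissible pairs. If no admissible pair exists, the set is empty.

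The second step settles existence. An admissible pair requires a $b$-subset of $\{2,\ldots,j-1\}$ together with $j\ge b+2$ and $j\le n$, which is possible if and only if $b\le n-2$. Conversely, for $b\le n-2$ we may take $j=n$, which also covers the case $n=1$ (where no $b\ge 0$ works).

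The final step, and the only real computation, is maximizing $M(j,B)$. Each $m_i$ is nonnegative and only grows with $j$, so we take $j=n$. Then
\[
M(n,B)=\binom{n}{2}-\sum_{i=1}^n|B\cap\{1,\ldots,i\}|=\binom{n}{2}-\sum_{x\in B}(n-x+1).
\]
This is maximized by making the elements of $B$ as large as possible subject to $B\subseteq\{2,\ldots,n-1\}$, that is, $B=\{n-b,\ldots,n-1\}$. That choice gives $\sum_{x\in B}(n-x+1)=2+3+\cdots+(b+1)=\binom{b+1}{2}+b$. Hence $M^*=\binom{n}{2}-\binom{b+1}{2}-b$, matching \eqref{eq:support}. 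The main point to check carefully is this optimization, in particular that the constraint $j=n\notin B$ forces $B$ to avoid $n$, which is exactly where the extra $-b$ comes from.
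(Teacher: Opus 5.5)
Your proposal is correct and takes essentially the same route as the paper. Both arguments use the $(B,A)$ encoding of type I elements, derive $b\le n-2$ from $j\ge b+2$, maximize $\sum_i \kappa_i$ by taking $B=\{n-b,\ldots,n-1\}$, and then attain every value in the interval because the $a_i$ range independently. Your closed form $M(n,B)=\binom{n}{2}-\sum_{x\in B}(n-x+1)$ justifies the optimization a bit more transparently than the paper's ``move elements of $B$ to larger indices'' remark, but the argument is the same.
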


\begin{proof}
    By Theorem~\ref{thm:improved-coefficient-formula},
    \begin{equation}
        c_{(a,1^b)}(n) = \#\{w \in \OSP(n,n-b) \ | \ \inv(w) = a,\ w \text{ is type I}\},
    \end{equation}
    so $c_{(a,1^b)}(n) > 0$ if and only if there exists a type I ordered set partition $w \in \OSP(n,n-b)$ with $\inv(w) = a$.
    We use the encoding $w = (B,A)$ where $|B| = b$, $\inv(w) = \sum_{i=1}^n a_i$, and the entries $a_i$ of the inversion sequence satisfy $0 \leq a_i \leq \kappa_i$, where
    \begin{equation}\label{eq:kappa}
        \kappa_i := i - 1 - |B \cap \{1,\ldots,i\}|.
    \end{equation}
    Recall that $w$ is type I exactly when there exists an index $j$ with $a_j > 0$, with $a_i = 0$ for all $i > j$, and with $i \notin B$ for all $i \geq j$.

    Suppose $w = (B,A)$ is type I, with associated index $j$.
    Since $i \notin B$ for all $i \geq j$, we have $B \subseteq \{2,\ldots,{j-1}\}$, so  $|B \cap \{1,\ldots,i\}| = b$ for every $i \geq j$. 
    That is, $\kappa_j = j - 1- b$.
    As $a_j \geq 1$ implies that $\kappa_j \geq 1$, we obtain $j \geq b+2$, and since $j \leq n$ this implies $b \leq n-2$.

    By the previous paragraph, $n \notin B$, so $B \subseteq \{2,\ldots,n-1\}$.
    Moving an element of $B$ to a larger index weakly increases every $\kappa_i$, hence the bound $\inv(w) = \sum_{i=1}^n a_i \leq \sum_{i=1}^n \kappa_i$, so this bound is maximized by taking $B$ to be the $b$ largest elements of $\{2,\ldots,n-1\}$, namely $B = \{n-b,\ldots,n-1\}$.
    In this case, we compute by equation~\eqref{eq:kappa} that
    \begin{equation}
        \kappa_i = \begin{cases}
            i-1 & \text{ if } 1 \leq i \leq n-b-1,\\
            n-b-2& \text{ if } n-b \leq i \leq n-1,\\
            n-b-1& \text{ if } i=n.
        \end{cases}
    \end{equation}
    Then $a$ is bounded above by
    \begin{equation}\label{eq:max}
         \sum_{i=1}^n \kappa_i = \binom{n-b-1}{2} + b(n-b-2) + (n-b-1) = \binom{n}{2} - \binom{b+1}{2} - b.
    \end{equation}
    This establishes that if $c_{(a,1^b)}(n) > 0$, then $a$ and $b$ satisfy equation~\eqref{eq:support}. 
    
    Next, we show that the conditions in equation~\eqref{eq:support} are sufficient.
    For fixed $b$ with $0 \leq b \leq n-2$, again consider the set $B = \{n-b,n-b+1,\ldots,n-1\}$.
    Any inversion sequence $A$ with $a_n \geq 1$ yields a type I ordered set partition $(B,A)$:
    the index $j=n$ is our witness for being type I, since $a_n > 0$, no index exceeds $n$, and $n \notin B$.
    Letting each $a_i$ range over $\{0,1,\ldots, \kappa_i\}$ for $1 \leq i \leq n-1$ and $a_n$ range over $\{1,\ldots,\kappa_n\}$, by equation~\eqref{eq:max}, the sum $\sum_{i=1}^n a_i$ could take on any integer value in $\{1,\ldots, \binom{n}{2} - \binom{b+1}{2} - b\}$.
    Hence for each such $a$, there exists a type I ordered set partition $w \in \OSP(n,n-b)$ with $\inv(w)=a$. 
    Hence $c_{(a,1^b)}(n) > 0$.
\end{proof}

\begin{example}
    We compute all coefficients $c_{(a,1^b)}(n)$ at $n=3$. From the previous examples, we have six type I ordered set partitions. We compute their inversion statistics:
    \begin{itemize}
        \item $\big(\varnothing, (0,1,0)\big) = \{2\}/\{1\}/\{3\}$ has $1$ inversion.
        \item $\big(\varnothing, (0,0,1)\big) = \{1\}/\{3\}/\{2\}$ has $1$ inversion.
        \item $\big(\varnothing, (0,1,1)\big) = \{2\}/\{3\}/\{1\}$ has $2$ inversions.
        \item $\big(\varnothing, (0,0,2)\big) = \{3\}/\{1\}/\{2\}$ has $2$ inversions.
        \item $\big(\varnothing, (0,1,2)\big) = \{3\}/\{2\}/\{1\}$ has $3$ inversions. 
        \item $\big(\{2\}, (0,0,1)\big) = \{3\}/\{1,2\}$ has $1$ inversion.   
    \end{itemize}
    Thus we compute $c_{(1)}(3) = 2$, $c_{(2)}(3) = 2$, $c_{(3)}(3) = 1$, and $c_{(1,1)}(3) = 1$. We always have $c_\varnothing(3) = 1$. Then all other $c_{(a,1^b)}(3) = 0$. 
\end{example}

\begin{example}
We compute all coefficients $c_{(a,1^b)}(n)$ at $n=4$. For brevity, we drop the set braces in the ordered set partitions.
\begin{figure}[h]
\begin{tabular}{l|l|llllll}
\text{Coefficient} & \text{Value} & \text{OSPs}         \\ \hline
$c_{(1)}(4)$       & 3            & $2|1|3|4$  & $1|3|2|4$ & $1|2|4|3$ &           &           &           \\
$c_{(1,1)}(4)$     & 3            & $1|4|23$   & $12|4|3$  & $3|12|4$  &           &           &           \\
$c_{(1,1,1)}(4)$   & 1            & $4|123$    &           &           &           &           &           \\
$c_{(2)}(4)$       & 5            & $2|1|4|3$  & $1|3|4|2$ & $2|3|1|4$ & $3|1|2|4$ & $1|4|2|3$ &           \\
$c_{(2,1)}(4)$     & 5            & $4|1|23$   & $13|4|2$  & $2|4|13$  & $3|4|12$  & $4|12|3$  &           \\
$c_{(3)}(4)$       & 6            & $2|3|4|1$  & $3|2|1|4$ & $2|4|1|3$ & $3|1|4|2$ & $4|1|2|3$ & $1|4|3|2$ \\
$c_{(3,1)}(4)$     & 4            & $4|13|2$   & $23|4|1$  & $4|2|13$  & $4|3|12$  &           &           \\
$c_{(4)}(4)$       & 5            & $4|1|3|2$  & $4|2|1|3$ & $3|4|1|2$ & $3|2|4|1$ & $2|4|3|1$ &           \\
$c_{(4,1)}(4)$     & 1            & $4|23|1$   &           &           &           &           &           \\
$c_{(5)}(4)$       & 3            & $4|2|3|1$  & $4|3|1|2$ & $3|4|2|1$ &           &           &           \\
$c_{(6)}(4)$       & 1            & $4|3|2|1$  &           &           &           &           &          
\end{tabular}
\caption{The coefficients $c_\lambda(4)$ and their contributing ordered set partitions (OSPs) given by Theorem~\ref{thm:improved-coefficient-formula}. Aside from $c_\varnothing(4)=1$, all coefficients $c_\lambda(4)$ not shown are $0$.}
\end{figure}

One can compute that
\begin{align*}
    \Hilb(R_4^{(1,1)};q;u) &=  (q^6+3q^5+5q^4+6q^3+5q^2+3q+1)+ (q^5+4q^4+9q^3+11q^2+8q+3)u\\
    &\quad+ (q^3+4q^2+6q+3)u^2 +u^3\nonumber\\
    &= 1+ c_{(1)}(4)s_{(1)}(q/u) + c_{(2)}(4)s_{(2)}(q/u) + c_{(3)}(4)s_{(3)}(q/u) + c_{(4)}(4)s_{(4)}(q/u)\\ 
    &\quad+ c_{(5)}(4)s_{(5)}(q/u) + c_{(6)}(4)s_{(6)}(q/u) + c_{(1,1)}(4)s_{(1,1)}(q/u) + c_{(2,1)}(4)s_{(2,1)}(q/u)\nonumber\\ 
    &\quad + c_{(3,1)}(4)s_{(3,1)}(q/u) + c_{(4,1)}(4)s_{(4,1)}(q/u) + c_{(1,1,1)}(4)s_{(1,1,1)}(q/u).\nonumber
\end{align*}
\end{example}

For the remainder of this section, we give a few special cases of Theorem~\ref{thm:improved-coefficient-formula}.
First, we easily recover the following result, at $b=0$, which was already known via the Hilbert series of the classical coinvariant ring $\Hilb(R_n^{(1,0)};q) = [n]_q!$ \cite{Artin}. 

\begin{corollary}\label{cor:coefficient-b=0}
    For $n \geq 1$ and $a \geq 0$, we have
    \begin{equation}
        c_{(a)}(n) = \langle q^a \rangle [n]_q!.
    \end{equation}
\end{corollary}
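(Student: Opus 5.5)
Two routes are available. The first uses Lemma~\ref{lem:c_{(a,1^b)}} directly, and the second uses Theorem~\ref{thm:improved-coefficient-formula}; I would present the first as the main argument and the second as a combinatorial check.

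For the first route, set $b=0$ in equation~\eqref{eq:first-formula-coeffs}. The alternating sum then has only its $i=0$ term, so
\[
c_{(a)}(n) = \#\{w \in \OSP(n,n) \mid \inv(w)=a\}.
\]
By equation~\eqref{eq:OSP-equation} with $k=n$, this count equals $\langle q^a\rangle [n]_q!\,S[n,n]$. It remains to show $S[n,n]=1$. This follows by induction from the recurrence~\eqref{eq:q-Stirling-def}: we have $S[n,n]=S[n-1,n-1]+[n]_q S[n-1,n]$, and $S[n-1,n]=0$ because $k>n$ forces $S[n,k]=0$ (also by a trivial induction from the initial conditions). Equivalently, $\OSP(n,n)$ is the set of permutations and $\inv$ restricts to the usual inversion number, which gives Artin's $[n]_q!$. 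For $a\geq 1$ this completes the proof. For $a=0$, the statement should be read with the convention $c_{(0)}(n)=c_\varnothing(n)=1=\langle q^0\rangle[n]_q!$; I would note this convention explicitly.

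For the second route, I would identify the type I elements of $\OSP(n,n)$. When $b=0$ we have $B=\varnothing$, so type I just means the inversion sequence $A$ is nonzero. Hence every $w\in\OSP(n,n)$ with $\inv(w)=a\geq 1$ is type I. This recovers the same count from Theorem~\ref{thm:improved-coefficient-formula}.

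I expect no real obstacle. The only points needing care are the edge case $a=0$ and the vanishing of $S[n-1,n]$.
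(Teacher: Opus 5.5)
Your proof is correct, and your main route differs slightly from the paper's. The paper derives the corollary from Theorem~\ref{thm:improved-coefficient-formula} at $b=0$. It notes that $\OSP(n,n)$ consists of permutations and uses implicitly that a permutation is type I exactly when it has an inversion. That is precisely your second route, where you make the implicit step explicit: with $B=\varnothing$, type I just means $A\neq 0$.

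Your primary argument instead uses Lemma~\ref{lem:c_{(a,1^b)}}, where the alternating sum collapses at $b=0$. The resulting identity $c_{(a)}(n)=\#\{w\in\OSP(n,n)\mid \inv(w)=a\}$ already appears verbatim in the proof of that lemma. In effect you are reading off the $u^0$ coefficient of the Rhoades--Wilson formula, namely $[n]_q!\,S[n,n]=[n]_q!$. This avoids the involution $\phi$ and the type I analysis entirely, so it is the more elementary argument. The paper's route is shorter on the page only because it leans on the main theorem; its value is as a consistency check of the type I interpretation.

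Two small remarks. First, $c_\varnothing(n)=1$ is not a convention. It follows from comparing constant terms: each $s_{(a,1^b)}(q/u)$ with $a\ge 1$ has zero constant term, while $\Hilb(R_n^{(1,1)};q;u)$ has constant term $1$. The only convention is reading $(0)$ as $\varnothing$. Second, you are right that $S[n-1,n]=0$ lies outside the range $1\le k\le n$ in which the recurrence~\eqref{eq:q-Stirling-def} is stated. Your alternative of counting permutations by $\inv$ directly (Artin's $[n]_q!$) sidesteps this cleanly.
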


\begin{proof}
    At $a=0$, we have that $c_\varnothing(n) = 1$.
    For $a \geq 1$, this follows from Theorem~\ref{thm:improved-coefficient-formula} at $b=0$ because then the ordered set partitions are all permutations.
\end{proof}

Next, we give formulas at small values of $a$.

\begin{corollary}\label{cor:KR-comparison} Fix $n \geq 1$. For $0 \leq b \leq n-2$, we have
        \begin{equation}
            c_{(1^{b+1})}(n) = \binom{n-1}{b+1},
        \end{equation}
        and
\begin{equation}
            c_{(2,1^{b})}(n) =
            \binom{n+1}{b+2}\frac{(b+1)(n-b-2)}{n}.
        \end{equation}
\end{corollary}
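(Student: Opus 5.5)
Both identities will come from Theorem~\ref{thm:improved-coefficient-formula}, using the encoding $w=(B,A)$ with $|B|=b$ and $\inv(w)=\sum_i a_i$. As in the proof of Proposition~\ref{prop:nonzero-support}, $w$ is type I exactly when its last nonzero entry $a_j$ satisfies $j\notin B$ and $B\subseteq\{2,\ldots,j-1\}$; then $\kappa_j=j-1-b$. So I will count pairs $(B,A)$ with $B\subseteq\{2,\ldots,j-1\}$, $|B|=b$, entries $0\le a_i\le\kappa_i$ (with $\kappa_i$ as in \eqref{eq:kappa}), $a_j\ge 1$, and $a_i=0$ for $i>j$, organised by the witness index $j$.

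\textbf{The case $a=1$.} The shape $(1^{b+1})=(1,1^b)$ has $a=1$, so $A$ has a single nonzero entry $a_j=1$. The only constraint is $\kappa_j=j-1-b\ge 1$, i.e.\ $j\ge b+2$. For each such $j$ there are $\binom{j-2}{b}$ choices of $B$, so
\[
c_{(1^{b+1})}(n)=\sum_{j=b+2}^{n}\binom{j-2}{b}=\binom{n-1}{b+1}
\]
by the hockey-stick identity.

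\textbf{The case $a=2$.} I split by the shape of $A$.

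(i) A single entry $a_j=2$. This needs $\kappa_j\ge 2$, i.e.\ $j\ge b+3$, and contributes $\sum_{j=b+3}^n\binom{j-2}{b}=\binom{n-1}{b+1}-1$.

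(ii) Two entries $a_i=a_j=1$ with $i<j$. Here $j\ge b+2$ and $i$ must satisfy $\kappa_i\ge1$; note $i\in B$ is allowed. We have $\kappa_i=0$ precisely when $\{2,\ldots,i\}\subseteq B$. So if $r(B)$ denotes the length of the initial run $\{2,\ldots,r+1\}\subseteq B$, the number of admissible $i$ is $(j-1)-1-r(B)=j-2-r(B)$. Counting pairs $(B,t)$ with $1\le t\le r(B)$ gives
\[
\sum_{B} r(B)=\sum_{t=1}^{b}\binom{j-2-t}{b-t}=\binom{j-2}{b-1},
\]
again by hockey stick (re-indexing via $\binom{m}{b-t}=\binom{m}{m-b+t}$). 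Hence case (ii) contributes
\[
\sum_{j=b+2}^{n}\Bigl[(j-2)\binom{j-2}{b}-\binom{j-2}{b-1}\Bigr].
\]

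\textbf{Closing the computation.} I will evaluate the sum above using $m\binom{m}{b}=(b+1)\binom{m+1}{b+1}-\binom{m}{b}$, which yields closed forms such as $(b+1)\binom{n}{b+2}$. Adding case (i) should then simplify to $\binom{n+1}{b+2}\frac{(b+1)(n-b-2)}{n}$. I will sanity-check the result against the values $c_{(2)}(4)=5$, $c_{(2,1)}(4)=5$, and $c_{(2)}(3)=2$ from the examples above.

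I expect the main obstacle to be this final binomial simplification, in particular keeping the boundary terms (the $-1$ in case (i) and the $j=b+2$ term) straight. The combinatorial reduction itself is routine.
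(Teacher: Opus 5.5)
Your proposal is correct and follows essentially the same route as the paper. Both decompose by the witness index $j$, split $a=2$ into the subcases $a_j=2$ and $a_j=1$, and count inadmissible lower positions via $\sum_B r(B)=\binom{j-2}{b-1}$; the paper phrases the $a=1$ count as a bijection with $(b+1)$-subsets $C=B\cup\{j\}$ rather than a hockey-stick sum, which amounts to the same thing. The final algebra closes as you expect: case (ii) sums to $(b+1)\binom{n}{b+2}-\binom{n-1}{b+1}-\binom{n-1}{b}+1$, and adding case (i) gives $(b+1)\binom{n}{b+2}-\binom{n-1}{b}=\binom{n+1}{b+2}\frac{(b+1)(n-b-2)}{n}$.
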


\begin{proof}
By Theorem~\ref{thm:improved-coefficient-formula}, the coefficient $c_{(a,1^b)}(n)$ counts type I pairs $(B,A)$ with $|B| =b$ and $\sum_i a_i = a$.
Let $j$ be the largest index where $a_j > 0$. Then we have $a_i = 0$ for all $i >j$, $B \subseteq \{ 2,\ldots,j-1\}$, and $\kappa_j = j-1-b$ by equation~\eqref{eq:kappa}.

Observe that a type I ordered set partition whose single inversion occurs at the top index (that is, $a_j=1$ and $a_i = 0 $ for all $i \neq j$) corresponds to a $(b+1)$-element subset $C := B \cup \{j\}$ of $\{2,\ldots,n\}$. 
That is, given such a $C$, the inversion is at index $j = \max C$, $B$ is obtained via $B = C \setminus \{j\}$, and the inversion sequence $A$ is admissible because $\kappa_j = j-1-b \geq 1$ since $j \geq b+2$.

In the case of $a=1$, every such ordered set partition has its single inversion occur at position $j$, so
\begin{equation}
    c_{(1^{b+1})}(n) = \binom{n-1}{b+1}.
\end{equation}

In the case of $a=2$, we consider two subcases depending on the value $a_j \in \{1,2\}$.

If $a_j=2$, the ordered set partition again corresponds to a subset $C$ as before, but for the inversion sequence to be admissible requires $\kappa_j = j-1-b \geq 2$, i.e., $\max C \geq b+3$. 
This only excludes the subset $\{2,\ldots,b+2\}$; hence the enumeration for this case is $\binom{n-1}{b+1} - 1$.

If $a_j=1$, the second inversion is $a_{j'}=1$ for one index $j' < j$, which is admissible as part of a valid inversion sequence if and only if $\kappa_{j'} \geq 1$, that is, $\{2,\ldots,j'\} \not\subseteq B$. 
There are $\binom{j-2}{b}$ choices of $B \subseteq \{2,\ldots, j-1\}$ and $j-2$ candidate positions $j' \in \{2,\ldots, j-1\}$, the inadmissible ones being those with $\{2,\ldots,j'\} \subseteq B$. 
Summing the number of inadmissible positions over all $B$ gives
\begin{equation}
    \sum_B \# \{ 2 \leq j' \leq j-1 : \{2,\ldots,j'\} \subseteq B\} = \sum_{j'=2}^{j-1} \binom{j-1-j'}{b-j'+1} = \binom{j-2}{b-1}.
\end{equation}
Hence, for fixed $j$, the $a_j = 1$ subcase contributes $(j-2)\binom{j-2}{b} - \binom{j-2}{b-1}$.
Summing over $j$, using the hockey-stick identity, we obtain
\begin{equation}
    \sum_{j=b+2}^n \left( (j-2) \binom{j-2}{b} - \binom{j-2}{b-1} \right) = (b+1) \binom{n-1}{b+2} + b\binom{n-1}{b+1} - \binom{n-1}{b} + 1.
\end{equation}

Summing both subcases, using Pascal's identity, gives
\begin{equation}
    \begin{aligned}
        c_{(2,1^b)}(n) &= (b+1)\binom{n-1}{b+2} + (b+1)\binom{n-1}{b+1} - \binom{n-1}{b}\\
        &= \binom{n+1}{b+2}\frac{(b+1)(n-b-2)}{n}.
    \end{aligned}
\end{equation}
\end{proof}

\begin{remark}
    In \cite{Lentfer-Supersymmetry}, there is also a version of diagonal supersymmetry for the Frobenius series of $R_n^{(1,1)}$ in terms of universal series coefficients $c_{\lambda,\mu}$ where $\mu \vdash n$. 
    We can recover the coefficients $c_\lambda(n)$ via $c_\lambda(n) = \sum_{\mu \vdash n} c_{\lambda,\mu} f^\mu$, where $f^\lambda$ is the number of standard Young tableaux of shape $\mu$. 
    Then another proof of Corollary~\ref{cor:KR-comparison} can be given using those coefficients $c_{\lambda,\mu}$ determined in \cite[Proposition 4.3]{Lentfer-Supersymmetry}.
\end{remark}

We conclude this section with an enumerative consequence of Theorem~\ref{thm:improved-coefficient-formula}.
Let $\Fub(n) := \#\OSP(n)$ denote the $n$th Fubini number \cite[\href{https://oeis.org/A000670}{A000670}]{OEIS}.

\begin{corollary}\label{cor:sum-of-coeffs}
    For $n \geq 1$, the number of type I ordered set partitions of $\{1,\ldots, n\}$ is $\frac{1}{2} \left( \Fub(n) -1 \right)$.
    Hence
    \begin{equation}
        \sum_{\lambda \in P(1,1,n)} c_\lambda (n) = \frac{\Fub(n) +1}{2}.
    \end{equation}
\end{corollary}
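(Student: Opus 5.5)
The plan is to count type I ordered set partitions using the structure of the Sagan--Swanson involution $\phi$ on all of $\OSP(n)$, and then to sum the formula of Theorem~\ref{thm:improved-coefficient-formula} over all hooks.

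\textbf{Counting type I partitions.} By the description of $\phi$ in the $(B,A)$ encoding, $\phi$ sends type I to type II and type II to type I. Since $\phi$ is an involution, it restricts to a bijection between the type I and type II ordered set partitions. The type III partitions are exactly those with $B=\varnothing$ and all $a_i=0$. Under the bijection this is the single partition $\{1\}/\{2\}/\cdots/\{n\}$, so there is exactly one fixed point. Hence
\[
\Fub(n) = 2\,\#\{\text{type I}\} + 1,
\]
which gives the first claim. The only point needing care is that the three types partition $\OSP(n)$ and that $\phi$ really swaps types I and II. Both facts were recorded in the bullet list describing $\phi$ on pairs $(B,A)$, so I take them as given.

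\textbf{Summing the coefficients.} I will sum over all hooks $(a,1^b)$ with $a\ge1$, $b\ge0$. By Theorem~\ref{thm:improved-coefficient-formula} the sum counts type I $w$ with $\blocks(w)=n-b$ and $\inv(w)=a$. Every type I partition $w$ satisfies $\inv(w)=\sum a_i\ge a_j\ge1$, and has a well-defined $b=n-\blocks(w)\ge0$. So each type I partition is counted exactly once, for the unique pair $(a,b)=(\inv(w),\,n-\blocks(w))$.

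Including the empty partition with $c_\varnothing(n)=1$, I get
\[
\sum_{\lambda\in P(1,1,n)} c_\lambda(n) = 1+\frac{\Fub(n)-1}{2}=\frac{\Fub(n)+1}{2}.
\]
One check remains: hooks with $\ell(\lambda)>n$, i.e. $b\ge n$, contribute nothing. This holds because such $b$ gives an empty $\OSP(n,n-b)$ (or one block, which is never type I, since type I needs a mergeable singleton block). I expect no real obstacle here.
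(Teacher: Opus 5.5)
Your proposal is correct and matches the paper's proof: $\phi$ pairs the type I and type II ordered set partitions and has the single fixed point $\{1\}/\{2\}/\cdots/\{n\}$. Summing Theorem~\ref{thm:improved-coefficient-formula} over all hooks and adding $c_\varnothing(n)=1$ then gives the second formula; your extra checks (every type I partition has $\inv(w)\ge 1$, and $b\ge n-1$ contributes nothing) spell out details the paper leaves implicit.
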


\begin{proof}
    The involution $\phi$ restricts to a bijection between the type I and the type II ordered set partitions of $\{1,\ldots,n\}$, and its unique fixed point is $\{1\}/\{2\}/ \cdots/\{n\}$, hence there are $\frac{1}{2} \left( \Fub(n) -1 \right)$ type I ordered set partitions.
    Then the second statement follows by summing equation~\eqref{eq:second-formula-coeffs} over all $a \geq 1$ and $b \geq 0$, along with the coefficient $c_\varnothing(n) = 1$.
\end{proof}

\section{The Hilbert series specialization and the conjecture of Sagan--Swanson}\label{sec:SaganSwanson}
In this section, we prove two of our main results, a new formula for the Hilbert series of $R_n^{(1,1)}$ specialized at $u=-q^2$, and the conjecture of Sagan--Swanson.
Let us outline the strategy. 
Lemma~\ref{lem:c_{(a,1^b)}} expresses $c_{(a,1^b)}(n)$ as a signed enumeration of ordered set partitions, and in Section~\ref{sec:main}, the sign-reversing involution $\phi$ of Sagan--Swanson cancels almost all of it: the elements that survive cancellation are the type I ordered set partitions of Theorem~\ref{thm:improved-coefficient-formula}. 
The specialization $u=-q^2$ introduces a second layer of signs, as each coefficient of $\Hilb(R_n^{(1,1)};q;u)|_{u=-q^2}$ is an alternating sum of the coefficients $c_{(a,1^b)}(n)$ along $\ell = a+2b$. 
Accordingly, in Proposition~\ref{prop:sylvie} we construct a second sign-reversing involution $\psi$, defined on the type I ordered set partitions that survived $\phi$.
The fixed points of $\psi$ are enumerated by $\binom{n}{\ell} -1$, which we now establish.

\begin{proposition}\label{prop:sylvie} For $1 \leq \ell \leq n$,
\begin{equation}\label{eq:to-show1}
        \sum_{b=0}^{\lfloor \frac{\ell-1}{2} \rfloor}  (-1)^{b} \#\{w \in \OSP(n,n-b)\, | \, \inv(w) = \ell-2b, \ w \text{ is type I }  \} = \binom{n}{\ell}-1,
    \end{equation}
    and for $\ell > n$, the same sum is $0$.
\end{proposition}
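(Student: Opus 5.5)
The plan is to cancel the signed sum in \eqref{eq:to-show1} with a second sign-reversing involution $\psi$ on type I ordered set partitions, working throughout in the encoding $w=(B,A)$. Recall from the proof of Proposition~\ref{prop:nonzero-support} that $w$ is type I with witness index $j$ exactly when $a_j>0$, $a_i=0$ for all $i>j$, and $B\subseteq\{2,\ldots,j-1\}$. For such $w$ we have $|B|=b$ and $\sum_i a_i=\inv(w)=\ell-2b$. So the left side of \eqref{eq:to-show1} is a signed count of type I pairs $(B,A)$ with weight $\sum_i a_i+2|B|=\ell$, each carrying sign $(-1)^{|B|}$.

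The involution $\psi$ should trade an element of $B$ for two units of inversion sequence, which preserves the weight and flips the sign. Concretely, I would scan indices $i=2,\ldots,j-1$ from the left and stop at the first $i$ with either $i\in B$ or $a_i\geq 2$ (or a variant adapted to the bounds $\kappa_i=i-1-|B\cap\{1,\ldots,i\}|$).
\begin{itemize}
\item If $i\in B$, remove $i$ from $B$ and raise $a_i$ by $2$.
\item If $a_i\geq 2$ and $i\notin B$, lower $a_i$ by $2$ and add $i$ to $B$.
\end{itemize}
Several checks are needed. Removing $i$ from $B$ raises $\kappa_{i'}$ by one for every $i'\geq i$, so admissibility above $i$ is preserved; adding $i$ to $B$ lowers them, so in that direction one must verify that $a_{i'}\leq\kappa_{i'}-1$ still holds. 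The witness $j$ is unchanged because $i<j$. Finally, the scanning rule must give the same stopping index before and after the move, which is what makes $\psi$ an involution.

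The fixed points would then be the pairs with $B=\emptyset$ and all $a_i\in\{0,1\}$, together with whatever boundary configurations the admissibility failures force. Type I with $B=\emptyset$ and $0/1$ entries summing to $\ell$ correspond to nonempty $\ell$-subsets of $\{2,\ldots,n\}$, giving $\binom{n-1}{\ell}$. To reach the target $\binom{n}{\ell}-1=\binom{n-1}{\ell}+\binom{n-1}{\ell-1}-1$, the remaining fixed points should be the configurations where the pairing is blocked: $a_i=\kappa_i$ makes the forward move inadmissible, or the move would involve the witness index $j$ itself (for example $a_j=2$ with $\kappa_j$ tight). I would expect those to be in bijection with $(\ell-1)$-subsets of $\{2,\ldots,n\}$ minus one exceptional configuration. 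For $\ell>n$, the claim would be that no fixed points survive at all.

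The main obstacle is designing $\psi$ precisely enough that it is a well-defined involution respecting admissibility, and then identifying the leftover fixed points exactly. I would first test the rule on the tables for $n=3,4$ above, and adjust it, for instance by scanning from the right or by treating the witness index $j$ separately.

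If the combinatorial route stalls, there is a fallback. By Corollary~\ref{cor:schur_specialization_-q^m=u} with $m=2$, the left side $X_\ell$ of \eqref{eq:to-show1} satisfies
\[
\Hilb(R_n^{(1,1)};q;u)|_{u=-q^2}=1+(1-q)\sum_{\ell\geq1}X_\ell q^\ell .
\]
The claimed values of $X_\ell$ are equivalent to
\[
\sum_{k=0}^n(-q^2)^{n-k}S^o[n,k]=(1-q)(1+q)^n+q^{n+1}.
\]
This identity can be proved by induction on $n$ from the recurrence $S^o[n,k]=[k]_q\bigl(S^o[n-1,k-1]+S^o[n-1,k]\bigr)$, or directly from Lemma~\ref{lem:SO-closed-form}.
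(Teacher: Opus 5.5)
\textbf{The combinatorial plan has a genuine gap.} It sits where you suspected, but changing the scan order will not fix it. Putting both units of the trade on a single entry $a_i$ is incompatible with admissibility.

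Removing $i$ from $B$ raises $\kappa_i$ by exactly one, so the move $a_i\mapsto a_i+2$ is blocked whenever $a_i=\kappa_i$. It is always blocked at $i=2$, where $\kappa_2=0$. Since your scan starts at $2$, every type I pair with $2\in B$ is left unmatched, with sign $(-1)^{|B|}$. In the other direction, adding $i$ to $B$ lowers every $\kappa_{i'}$ with $i'\ge i$, so it is blocked as soon as some later $a_{i'}=\kappa_{i'}$.

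Concrete failures:
\begin{itemize}
\item For $n=\ell=3$ the sum consists of $(\emptyset,(0,1,2))$ with sign $+1$ and $(\{2\},(0,0,1))$ with sign $-1$. Your rule moves neither. The first has no stopping index below $j=3$. For the second, the image would need $a_2=2$ while the new bound is $1$.
\item For $n=4$, $\ell=3$ your rule leaves five unmatched elements of sign $+1$ and two of sign $-1$.
\end{itemize}
So the unmatched set has mixed signs, and your predicted description ($\binom{n-1}{\ell}$ plus a positively counted family of blocked configurations) is not what the rule produces. Also, because you never scan $j$, the unmatched elements with $B=\emptyset$ include ones with $a_j\ge 2$, such as $(\emptyset,(0,0,0,3))$.

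\textbf{The missing idea: split the two units between two indices.} The paper's $\psi$ toggles $k\in B$ while changing both $a_k$ and a later entry $a_{\hat\jmath}$ by one (types IA/IB). Here $a_i=0$ strictly between $k$ and $\hat\jmath$, all entries above $\hat\jmath$ lie in $\{0,1\}$, and $B$ has no element above $k$. Since $\kappa_k$ and $\kappa_{\hat\jmath}$ each move by exactly one, admissibility and the type I witness survive in both directions. The fixed points then all have sign $+1$: $B=\emptyset$, at most one entry exceeding $1$, and only zeros before it. Those with only $0/1$ entries number $\binom{n-1}{\ell}$. Deleting $a_1$ and lowering the large entry by one identifies the rest with the fixed points for $(n-1,\ell-1)$. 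This gives $f_{n,\ell}=\binom{n-1}{\ell}+f_{n-1,\ell-1}=\binom{n}{\ell}-1$.

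\textbf{Your fallback is sound and gives a complete, non-bijective proof.} It is essentially the paper's second route: Proposition~\ref{prop:H-recurrence} at $m=2$ and Corollary~\ref{cor:2^n-n-1}. Your reduction to $\sum_k(-q^2)^{n-k}S^o[n,k]=(1-q)(1+q)^n+q^{n+1}$ is correct, since $1-q$ is not a zero divisor.

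One step needs care. The $S^o$ recurrence does not close on $H_n:=\sum_k(-q^2)^{n-k}S^o[n,k]$ alone.
\begin{itemize}
\item It gives $H_n=H_{n-1}+q(1-q)A_{n-1}$, where $A_{n-1}=\sum_k(-q^2)^{n-1-k}[k]_qS^o[n-1,k]$.
\item Writing $(1-q)[k]_q=1-q^k$ turns $(1-q)A_{n-1}$ into $H_{n-1}-q^{n-1}\sum_k(-q)^{n-1-k}S^o[n-1,k]$.
\item That last sum is identically $1$: the same manipulation at $u=-q$ shows it is independent of $n$.
\item Hence $H_n=(1+q)H_{n-1}-q^n$, and the closed form follows by induction.
\end{itemize}
Compared with the paper's proof of this proposition, this route gives up the explicit positive set of fixed points counted by $\binom{n}{\ell}-1$.
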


\begin{proof}
Observe that $n-b$ is the number of blocks in each ordered set partition, so the sign $(-1)^b$ equals $(-1)^{n-\blocks(w)}$.

Given an ordered set partition $w=(B,A)$ in $\mathcal T_{n,\ell}$, since it is of type I, it has an associated index $j$. Furthermore, we say that such $w$ is of 
\begin{itemize}
    \item \textbf{type IA} if there exist $k,\hat{\jmath}$ satisfying $k<\hat{\jmath} \leq j$, such that $a_k>0$, $a_{\hat{\jmath}}>1$, $a_i=0$ for all $k<i<\hat{\jmath}$, $a_i\in\{0,1\}$ for all $i>\hat{\jmath}$, and $i\notin B$ for $i\geq k$;
    \item \textbf{type IB} if there exist $k,\hat{\jmath}$ satisfying $k<\hat{\jmath} \leq j$, such that $k\in B$, $a_{\hat{\jmath}}>0$, $a_i=0$ for all $k<i<\hat{\jmath}$, $a_i\in \{0,1\}$ for all $i>\hat{\jmath}$, and $i\notin B$ for all $i>k$;
    \item \textbf{type IC} if $\inv(w)=\ell$ (i.e.,  $B=\emptyset$), there is at most one $\hat{\jmath} \leq j$ such that $a_{\hat{\jmath}}>1$, and if it exists, then $a_1=\cdots =a_{\hat{\jmath}-1}=0$.
\end{itemize}
One may check that any $w \in \mathcal T_{n,\ell}$ is exactly one of type IA, type IB, or type IC.

We define a new involution $\psi$ on $\mathcal T_{n,\ell}$.
\begin{itemize}
\item If $w$ is of type IA, then $\psi(w)=(\tilde B,\tilde A)$ where $\tilde B=B\cup\{k\}$ and $\tilde A$ is given by $\tilde a_i=a_i$ for all $i\neq \hat{\jmath},k$ and $\tilde a_k=a_k-1$ and $\tilde a_{\hat{\jmath}}=a_{\hat{\jmath}}-1$. Then $\psi(w)$ is of type IB.
\item If $w$ is of type IB, then $\psi(w)=(\tilde B,\tilde A)$ where $\tilde B=B\backslash\{k\}$ and $\tilde A$ is given by $\tilde a_i=a_i$ for all $i\neq \hat{\jmath},k$ and $\tilde a_k=a_k+1$ and $\tilde a_{\hat{\jmath}}=a_{\hat{\jmath}}+1$. Then $\psi(w)$ is of type IA.
\item If $w$ is of type IC, then it is a fixed point.
\end{itemize}
One may check that $\psi$ is well defined, that is, that $\psi(w) \in \mathcal T_{n,\ell}$.
The involution $\psi$ is sign-reversing (except at the fixed points) using the same definition of sign as before, $(-1)^{n-\blocks(w)}$.

Let $f_{n,\ell}$ be the number of fixed points of $\psi$, that is, the number of ordered set partitions of type IC in $\mathcal T_{n,\ell}$. 
Note that they are all of the same sign, since they have $n$ blocks. Thus the left hand side of equation~(\ref{eq:to-show1}) simplifies to 
\begin{equation}
    \#\{ w \in \OSP(n,n) \,|\, \text{$w$ is type IC}\}.
\end{equation}
If the ordered set partition $(\emptyset, A)$ is such that $a_i \in \{0,1\}$ for all $i$, then the number of them is $\binom{n-1}{\ell}$, since $a_1$ must be 0.  
If the ordered set partition $(\emptyset, A)$ is such that there exists a $j$ with $a_j>1$, then $(\emptyset, (a_2,\ldots, a_{j-1},a_j-1,a_{j+1},\ldots ,a_n))$ is an ordered set partition of type IC in $\mathcal T_{n-1,\ell-1}$.
This map is a bijection onto the type IC elements of $\mathcal T_{n-1,\ell-1}$, which are enumerated by $f_{n-1,\ell-1}$.
Thus for $n > \ell$, we have
\begin{equation}
    f_{n,\ell}=\binom{n-1}{\ell}+f_{n-1,\ell-1}.
\end{equation}
If $n\leq \ell$, it is easy to see that $f_{n,\ell}=0$.
By induction, with base cases $f_{r,r} = 0$ and $f_{r,0} = 0$ for all $r$, we conclude that
\begin{equation}
f_{n,\ell}=\binom{n}{\ell}-1.
\end{equation}
\end{proof}

Now we are ready to prove one of our main theorems.

\begin{proof}[Proof of Theorem~\ref{thm:main-theorem}]
    By Proposition~\ref{prop:DSS-Hilbert} and Corollary~\ref{cor:schur_specialization_-q^m=u}, we have that
    \begin{equation}
    \begin{aligned}
        \Hilb(R_n^{(1,1)};q;u)|_{u=-q^2} &= \sum_{\lambda \in P(1,1,n)} c_\lambda(n) s_\lambda(q/u)|_{u=-q^2}\\
        &= 1 +\sum_{\substack{a \geq 1,\\ b \geq 0}} c_{(a,1^b)}(n)(-1)^b q^{a+2b}(1-q).
    \end{aligned}
    \end{equation}
    Now by Theorem~\ref{thm:improved-coefficient-formula}, this is equal to 
    \begin{align}
        &1 + (1-q)\sum_{\substack{a \geq 1,\\ b \geq 0}} (-1)^b q^{a+2b} \#\{w \in \OSP(n,n-b)\, | \, \inv(w) = a, \ w \text{ is type I}  \}.
    \end{align} 
Extracting the coefficient of $q^\ell$ from the sum, we have
\begin{equation}
\begin{aligned}
    \langle q^\ell \rangle& \sum_{\substack{a \geq 1,\\ b \geq 0}} (-1)^b q^{a+2b} \#\{w \in \OSP(n,n-b)\, | \, \inv(w) = a,\ w \text{ is type I} \}\\
    &= \sum_{\substack{a\geq 1,b \geq 0\\ \text{s.t. } \ell = a+2b}} (-1)^{b} \#\{w \in \OSP(n,n-b)\, | \, \inv(w) = a,\ w \text{ is type I}\}\\
    &= \sum_{b=0}^{\lfloor \frac{\ell-1}{2} \rfloor} (-1)^{b} \#\{w \in \OSP(n,n-b)\, | \, \inv(w) = \ell-2b,\ w \text{ is type I}\},
\end{aligned}
\end{equation}
which by Proposition~\ref{prop:sylvie} equals $\binom{n}{\ell}-1$ for $1 \leq \ell \leq n$ and equals $0$ for $\ell > n$.

Plugging this back in, and noting that the term $\ell = n$ vanishes since $\binom{n}{n}-1 = 0$, we get that
\begin{equation}
    \Hilb(R_n^{(1,1)};q;u)|_{u=-q^2} = 1+ (1-q)\sum_{\ell=1}^{n-1}q^\ell\left( \binom{n}{\ell} -1\right).
\end{equation}
Finally, extract the coefficient of $q^i$, which is $\binom{n}{i} - \binom{n}{i-1}$, proving the theorem.
\end{proof}

Now we can prove the conjecture of Sagan--Swanson.

\begin{proof}[Proof of Theorem~\ref{thm:sagan-swanson-conjecture}]
    By Theorem~\ref{thm:main-theorem} and Theorem~\ref{thm:hilbert}, we have that
    \begin{equation}
        \left(\sum_{k=0}^n (-q^2)^{n-k}[k]_q!S[n,k]\right)-1 = \sum_{i=1}^n d_i q^i, 
    \end{equation}
where $d_i =\binom{n}{i} - \binom{n}{i-1}$.
Since 
\begin{equation}
    \binom{n}{i} - \binom{n}{i-1} = -\left( \binom{n}{n-i+1} - \binom{n}{n-i}\right),
\end{equation}
it follows that $d_i = -d_{n-i+1}$ for all $i \in \{1,\ldots,n\}$, so they are palindromic ignoring signs.
The coefficients $d_1, \ldots, d_{\lfloor\frac{n}{2} \rfloor}$ are all positive and the coefficients $d_{n -\lfloor\frac{n}{2} \rfloor+1}, \ldots, d_{n}$ are all negative. 
If and only if $n$ is odd, then there is a middle coefficient $d_{\lfloor\frac{n}{2} \rfloor + 1} = 0$.
This completes the proof.
\end{proof}

\section{Specializations at \texorpdfstring{$u = -q^m$}{u=-qm}}\label{sec:generating-function-hub}

The goal of this section is to study the Hilbert series of $R_n^{(1,1)}$ at the more general specialization $u=-q^m$. 
The main result of this section is a new formula for that specialization of the Hilbert series.

Define the generating function
\begin{equation}
    Q_n(q,u) := \sum_{\substack{a\geq 1,\\ b\geq 0}} c_{(a,1^b)}(n) q^a u^b = \sum_{\substack{w \in \OSP(n),\\ w \text{ is type I}}} q^{\inv(w)} u^{n- \blocks(w)},
\end{equation}
where the second equality is Theorem~\ref{thm:improved-coefficient-formula}. 
By Lemma~\ref{lem:schur-1-1}, $s_{(a,1^b)}(q/u) = \frac{q+u}{q}q^a u^b$, so we may write
\begin{equation}
    \Hilb(R_n^{(1,1)};q;u) = 1 +  \frac{q+u}{q}Q_n(q,u).
\end{equation}
For integers $n \geq 0$ and $m\geq 0$, set
\begin{equation}\label{eq:H-definition}
    \mathcal H_n^{(m)} := \Hilb(R_n^{(1,1)};q;u)\big|_{u=-q^m},
\end{equation}
where $R_0^{(1,1)} := \C$, so that $\mathcal H_0^{(m)} = 1$.
For $m \geq 1$, we may write
\begin{equation}
    \mathcal H_n^{(m)} = 1 +  (1-q^{m-1})Q_n(q,-q^m).
\end{equation}
Define, for $m \geq 1$,
\begin{equation}\label{eq:K-definition}
    \mathcal K_n^{(m)} := Q_n(q,-q^m). 
   \end{equation}
Then for $m \geq 2$,
   \begin{equation}\label{eq:K-fraction}
   \mathcal K_n^{(m)} =  \frac{\mathcal H_n^{(m)} - 1}{1-q^{m-1}} = \sum_{\substack{a \geq 1,\\ b \geq 0}} c_{(a,1^b)}(n)\,(-1)^b q^{a+mb}.
\end{equation}

Our next goal is to establish recurrences and a closed-form expression for 
\begin{equation}\label{eq:for-lemma-r1}
    \mathcal H_n^{(m)}=\sum_{k=0}^n(-1)^{n-k}q^{m(n-k)}S^o[n,k].
\end{equation}
First we will prove two lemmas.
For $n \geq 0$ and $m \geq 1$, we define an auxiliary sum
    \begin{equation}\label{eq:def-A}
        \mathcal A_n^{(m)} := \sum_{k=0}^n(-1)^{n-k}q^{m(n-k)}[k]_q  S^o[n,k].
    \end{equation}

\begin{lemma}\label{lem:recurrence1}
For $n \geq 0$ and $m \geq 1$,
\begin{equation}
    \mathcal H_{n+1}^{(m)} = (1-q^m)\mathcal A_n^{(m)} + q^n \mathcal H_n^{(m-1)}.
\end{equation}
\end{lemma}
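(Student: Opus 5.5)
The identity should follow from a direct manipulation of the defining sums. I would use the recurrence $S^o[n+1,k]=[k]_q\bigl(S^o[n,k-1]+S^o[n,k]\bigr)$ and then split the $q$-integer $[j+1]_q=[j]_q+q^j$; no combinatorics is needed.

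\textbf{Step 1: expand.} By equation~\eqref{eq:for-lemma-r1}, which comes from Theorem~\ref{thm:hilbert} and holds for every $m\ge 0$ (including $m-1=0$, i.e.\ $u=-1$),
\begin{equation*}
\mathcal H_{n+1}^{(m)}=\sum_{k=0}^{n+1}(-1)^{n+1-k}q^{m(n+1-k)}S^o[n+1,k].
\end{equation*}
The $k=0$ term vanishes because $S^o[n+1,0]=0$ for $n\ge 0$. For $1\le k\le n+1$ I apply the recurrence for $S^o$, using the convention $S^o[n,n+1]=0$. This splits the sum into a piece containing $S^o[n,k]$ and a piece containing $S^o[n,k-1]$.

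\textbf{Step 2: the $S^o[n,k]$ piece.} This piece is
\begin{equation*}
\sum_{k}(-1)^{n+1-k}q^{m(n+1-k)}[k]_qS^o[n,k]=-q^m\mathcal A_n^{(m)},
\end{equation*}
by the definition~\eqref{eq:def-A}. Including $k=0$ is harmless since $[0]_q=0$.

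\textbf{Step 3: the $S^o[n,k-1]$ piece.} I reindex with $j=k-1$, which turns this piece into
\begin{equation*}
\sum_{j=0}^{n}(-1)^{n-j}q^{m(n-j)}[j+1]_qS^o[n,j].
\end{equation*}
Writing $[j+1]_q=[j]_q+q^j$ produces two sums:
\begin{itemize}
\item The $[j]_q$ part is exactly $\mathcal A_n^{(m)}$.
\item In the $q^j$ part, the exponent rewrites as $q^{m(n-j)+j}=q^n q^{(m-1)(n-j)}$. So this part is $q^n\sum_j(-1)^{n-j}q^{(m-1)(n-j)}S^o[n,j]=q^n\mathcal H_n^{(m-1)}$, again by~\eqref{eq:for-lemma-r1}.
\end{itemize}

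\textbf{Step 4: combine.} Adding the pieces gives $\mathcal A_n^{(m)}-q^m\mathcal A_n^{(m)}+q^n\mathcal H_n^{(m-1)}$, which is the claimed identity.

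The only point needing care is the boundary terms in the index shift ($k=0$ and $k=n+1$). These are handled by $[0]_q=0$, $S^o[n+1,0]=0$ and $S^o[n,n+1]=0$. The case $n=0$ can be checked directly: both sides equal $1-q^m+1\cdot 1$ minus nothing, since $\mathcal A_0^{(m)}=0$ and $\mathcal H_1^{(m)}=1=\mathcal H_0^{(m-1)}$.
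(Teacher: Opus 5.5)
Your proof is correct and is the same computation as the paper's. In both, you apply $S^o[n+1,k]=[k]_q\bigl(S^o[n,k-1]+S^o[n,k]\bigr)$, recognize $-q^m\mathcal A_n^{(m)}$ in the unshifted piece, and split $[j+1]_q=[j]_q+q^j$ in the shifted piece to get $\mathcal A_n^{(m)}+q^n\mathcal H_n^{(m-1)}$; the paper performs this split silently in one line. The only blemish is the garbled expression ``$1-q^m+1\cdot 1$'' in your $n=0$ check, since both sides are simply $1$, but that separate check is redundant because your general argument already covers $n=0$.
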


\begin{proof}
    Apply $S^o[n+1,k] = [k]_q \left( S^o[n,k-1] + S^o[n,k]\right)$ to equation~\eqref{eq:for-lemma-r1}, yielding
    \begin{equation}
    \begin{aligned}
        \mathcal H_{n+1}^{(m)} &= \sum_{k=0}^{n+1} (-1)^{n+1-k} q^{m(n+1-k)} [k]_q \left( S^o[n,k-1] + S^o[n,k]\right)\\
        &= \mathcal A_n^{(m)} + \sum_{k=0}^n(-1)^{n-k}q^{m(n-k)+k}S^o[n,k] -q^m \mathcal A_n^{(m)}.
    \end{aligned}
    \end{equation}
   We find that
    \begin{equation}
        \sum_{k=0}^n(-1)^{n-k}q^{m(n-k)+k}S^o[n,k] = q^n\sum_{k=0}^n(-1)^{n-k} q^{(m-1)(n-k)}S^o[n,k] = q^n\mathcal H_n^{(m-1)}.
    \end{equation}
    Combining these completes the proof.
\end{proof}

\begin{lemma}\label{lem:recurrence2}
    For $n\geq0$ and $m\geq1$,
\begin{equation}
    \mathcal A_n^{(m)} = \frac{\mathcal H_n^{(m)} - q^n\,\mathcal H_n^{(m-1)}}{1-q}.
\end{equation}
\end{lemma}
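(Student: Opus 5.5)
The identity is a direct algebraic consequence of writing $[k]_q = \frac{1-q^k}{1-q}$ inside the definition~\eqref{eq:def-A} of $\mathcal A_n^{(m)}$. The plan is to multiply $\mathcal A_n^{(m)}$ by $1-q$, split the sum into two pieces, and recognize each piece as a specialization $\mathcal H_n^{(\cdot)}$ via equation~\eqref{eq:for-lemma-r1}.

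Concretely, we will compute
\begin{equation*}
(1-q)\mathcal A_n^{(m)} = \sum_{k=0}^n (-1)^{n-k} q^{m(n-k)} S^o[n,k] \;-\; \sum_{k=0}^n (-1)^{n-k} q^{m(n-k)+k} S^o[n,k].
\end{equation*}
The first sum is exactly $\mathcal H_n^{(m)}$ by equation~\eqref{eq:for-lemma-r1}. For the second sum we use the same exponent manipulation as in the proof of Lemma~\ref{lem:recurrence1}: $m(n-k)+k = n + (m-1)(n-k)$. Hence the second sum equals $q^n\sum_{k}(-1)^{n-k}q^{(m-1)(n-k)}S^o[n,k] = q^n \mathcal H_n^{(m-1)}$.

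Dividing by $1-q$, which is legitimate in $\mathbb{Z}[[q]]$ or simply as rational functions, gives the claim. The only step needing a moment's care is the edge case $m=1$, where $\mathcal H_n^{(0)}$ means $\Hilb(R_n^{(1,1)};q;u)|_{u=-1}$. Equation~\eqref{eq:for-lemma-r1} still holds there with $m-1=0$ by Theorem~\ref{thm:hilbert}, since the definition~\eqref{eq:H-definition} allows $m\geq 0$. The case $n=0$ is also immediate: both sides vanish, because $[0]_q=0$ and $\mathcal H_0^{(m)}=\mathcal H_0^{(m-1)}=1$.
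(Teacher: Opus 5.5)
Your proposal is correct and is exactly the paper's argument: substitute $[k]_q=\frac{1-q^k}{1-q}$ into the definition of $\mathcal A_n^{(m)}$, split into two sums, and identify them as $\mathcal H_n^{(m)}$ and $q^n\mathcal H_n^{(m-1)}$ using $m(n-k)+k=n+(m-1)(n-k)$. Your checks of the $m=1$ case (using $\mathcal H_n^{(0)}$ at $u=-1$) and the $n=0$ case are fine.
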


\begin{proof}
    Since $[k]_q=(1-q^k)/(1-q)$, this follows from equations~(\ref{eq:for-lemma-r1},~\ref{eq:def-A}) and a bit of algebra.
\end{proof}

We now prove a recurrence for the Hilbert series specializations $\mathcal H_n^{(m)}$.

\begin{proposition}\label{prop:H-recurrence}
    For all $m \geq 1$,
    \begin{equation}
        \mathcal H_n^{(m)} = [m]_q \mathcal H_{n-1}^{(m)} - q^n [m-1]_q \mathcal H_{n-1}^{(m-1)},
    \end{equation}
    for $n \geq 1$, and $\mathcal H_0^{(m)} = 1$.
    The base case is $\mathcal H_n^{(1)} = 1$ for all $n$.
\end{proposition}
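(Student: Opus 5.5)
Combine Lemma~\ref{lem:recurrence1} and Lemma~\ref{lem:recurrence2} to eliminate the auxiliary sum $\mathcal A$; the recurrence then reduces to bookkeeping in $m$. The base cases come directly from the definitions.

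\textbf{Base cases.} For $n=0$ we have $\mathcal H_0^{(m)}=1$ by convention. For $m=1$, Corollary~\ref{cor:schur_specialization_-q^m=u} gives $s_{(a,1^b)}(q/u)|_{u=-q}=0$ for every hook, and $s_\varnothing=1$. Proposition~\ref{prop:DSS-Hilbert} then yields $\mathcal H_n^{(1)}=1$ for all $n$. The recurrence at $m=1$ is consistent with this, since $[0]_q=0$ and $[1]_q=1$.

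\textbf{Main step.} Fix $m\ge 1$ and $n\ge 1$. Apply Lemma~\ref{lem:recurrence1} with $n-1$ in place of $n$:
\[
\mathcal H_n^{(m)}=(1-q^m)\mathcal A_{n-1}^{(m)}+q^{n-1}\mathcal H_{n-1}^{(m-1)}.
\]
Substitute Lemma~\ref{lem:recurrence2}, namely $\mathcal A_{n-1}^{(m)}=(\mathcal H_{n-1}^{(m)}-q^{n-1}\mathcal H_{n-1}^{(m-1)})/(1-q)$. Writing $(1-q^m)/(1-q)=[m]_q$, this gives
\[
\mathcal H_n^{(m)}=[m]_q\mathcal H_{n-1}^{(m)}-q^{n-1}\bigl([m]_q-1\bigr)\mathcal H_{n-1}^{(m-1)}.
\]
Since $[m]_q-1=q[m-1]_q$, the second coefficient becomes $q^{n}[m-1]_q$, which is exactly the claimed recurrence.

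\textbf{Edge case to check.} For $m=1$, Lemmas~\ref{lem:recurrence1} and~\ref{lem:recurrence2} involve $\mathcal H^{(0)}$. This object is defined by equation~\eqref{eq:H-definition} for $m\ge 0$, so the lemmas still apply, and its coefficient $q^{n}[0]_q$ vanishes anyway. The only real care needed is the index shift and the identity $[m]_q-1=q[m-1]_q$. The argument is a short algebraic combination, so I expect no substantive obstacle.
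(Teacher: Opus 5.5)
Your proof is correct and follows the paper's argument: substitute Lemma~\ref{lem:recurrence2} (shifted to $n-1$) into Lemma~\ref{lem:recurrence1}, then simplify with $(1-q^m)/(1-q)=[m]_q$ and $[m]_q-1=q[m-1]_q$. The one difference is the base case. You obtain $\mathcal H_n^{(1)}=1$ from the vanishing of the hook super Schur functions at $u=-q$, which also uses $c_\varnothing(n)=1$, whereas the paper reads it directly off the recurrence, which at $m=1$ collapses to $\mathcal H_n^{(1)}=\mathcal H_{n-1}^{(1)}$.
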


\begin{proof}
Plug Lemma~\ref{lem:recurrence2} at $n \mapsto n-1$ into Lemma~\ref{lem:recurrence1}, and simplify with the identities $[m]_q = \frac{1-q^m}{1-q}$ and $[m]_q-1 = q[m-1]_q$.    
By definition, $\mathcal H_0^{(m)} = 1$; then use the recurrence relation to compute that $\mathcal H_n^{(1)} = \mathcal H_{n-1}^{(1)} = \cdots = \mathcal H_0^{(1)} = 1$. 
Note that this base case of $\mathcal H_n^{(1)} = 1$ was previously shown by Sagan--Swanson \cite[Theorem 5.5]{SaganSwanson2024}.
\end{proof}

Define (a special case of) the \textbf{$q$-Pochhammer symbol} by
\begin{equation}
    (q;q)_r = \prod_{i=1}^r (1-q^i).
\end{equation}
By induction on $r \geq 0$, one can show that 
\begin{equation}\label{eq:pochhammer}
        \sum_{i=0}^{r} \frac{q^i}{(q;q)_i} = \frac{1}{(q;q)_r}.
\end{equation}

Now we can prove a closed-form formula for $\mathcal H_n^{(m)}$ and $\mathcal K_n^{(m)}$.

\begin{theorem}\label{thm:H-K-closed-form}
    For all $m \geq 1$ and $n \geq 0$,
    \begin{equation}\label{eq:H-closed-form}
        \mathcal H_n^{(m)} = (q;q)_{m-1}\sum_{j=0}^{m-1}  \frac{q^{j(n+1)}}{(q;q)_{j}}[m-j]_q^n.
    \end{equation}
    Furthermore, if $m \geq 2$ and $n \geq 0$,
    \begin{equation}\label{eq:K-closed-form}
        \mathcal K_n^{(m)} = (q;q)_{m-2} \sum_{j=0}^{m-2} \frac{q^{j(n+1)}}{(q;q)_j}[m-j]_q^n -[n+1]_{q^{m-1}}.
    \end{equation}
\end{theorem}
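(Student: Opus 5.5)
The plan is to prove \eqref{eq:H-closed-form} by induction on $m$, and for each $m$ by induction on $n$, using the recurrence of Proposition~\ref{prop:H-recurrence}. Write $G_n^{(m)}$ for the right-hand side of \eqref{eq:H-closed-form}. The base case $m=1$ is immediate: the sum has the single term $j=0$ with $(q;q)_0=1$ and $[1]_q^n=1$, so $G_n^{(1)}=1=\mathcal H_n^{(1)}$.

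For $n=0$ we need $G_0^{(m)}=1$. Indeed $G_0^{(m)}=(q;q)_{m-1}\sum_{j=0}^{m-1} q^j/(q;q)_j$, which equals $1$ by \eqref{eq:pochhammer}.

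For the inductive step, with $m\ge 2$ and $n\ge 1$, I will verify that
\[
[m]_q G_{n-1}^{(m)} - q^n[m-1]_q G_{n-1}^{(m-1)} = G_n^{(m)}
\]
holds term by term in $j$. Reindex the second sum, whose terms are $q^{jn}[m-1-j]_q^{n-1}/(q;q)_j$ multiplied by $(q;q)_{m-2}$. Using $(q;q)_{m-1}=(1-q^{m-1})(q;q)_{m-2}$, rewrite the prefactor as $(q;q)_{m-2}(1-q)[m-1]_q$ so that it matches the first sum's prefactor $(q;q)_{m-1}$ up to the factor $(1-q^{m-1})$. Then pair the term $j$ of the first sum with the term $j-1$ of the second, since both contain $[m-j]_q^{n-1}$.

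Concretely, the coefficient of $(q;q)_{m-1}[m-j]_q^{n-1}$ should be
\[
\frac{[m]_q q^{jn}}{(q;q)_j} - \frac{q^n[m-1]_q\, q^{(j-1)n}}{(1-q^{m-1})(q;q)_{j-1}}.
\]
The target is $q^{j(n+1)}[m-j]_q/(q;q)_j$. After multiplying through by $(q;q)_j q^{-jn}$, the required identity becomes
\[
[m]_q-\frac{(1-q^j)[m-1]_q}{1-q^{m-1}} = q^j[m-j]_q .
\]
Since $[m-1]_q/(1-q^{m-1}) = 1/(1-q)$, this reduces to $[m]_q-[j]_q=q^j[m-j]_q$, which is true.

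The boundary terms need separate care. The $j=0$ term of the first sum has no partner, which is consistent because $(q;q)_{-1}$ is absent and $1-q^0=0$. The second sum's last reindexed term has $j=m-1$, where $[m-j]_q=[1]_q$ and the formula still applies. I expect the main obstacle to be exactly this bookkeeping of prefactors and endpoints, so I would double-check the case $m=2$, $n=1$ directly.

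For \eqref{eq:K-closed-form}, I will use \eqref{eq:K-fraction}: $\mathcal K_n^{(m)}=(\mathcal H_n^{(m)}-1)/(1-q^{m-1})$. In \eqref{eq:H-closed-form}, separate the $j=m-1$ term, which equals $(q;q)_{m-1}q^{(m-1)(n+1)}/(q;q)_{m-1}=q^{(m-1)(n+1)}$. The remaining terms carry the factor $(q;q)_{m-1}=(1-q^{m-1})(q;q)_{m-2}$, so dividing by $1-q^{m-1}$ yields exactly the displayed sum over $0\le j\le m-2$. Finally,
\[
\frac{q^{(m-1)(n+1)}-1}{1-q^{m-1}}=-[n+1]_{q^{m-1}},
\]
which gives \eqref{eq:K-closed-form}.
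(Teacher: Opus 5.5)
Your proposal is correct and follows essentially the same route as the paper. You show that the right-hand side satisfies the recurrence of Proposition~\ref{prop:H-recurrence} with the same initial conditions (via \eqref{eq:pochhammer} at $n=0$). Your termwise identity $[m]_q-[j]_q=q^j[m-j]_q$ and your prefactor rewriting $(q;q)_{m-2}[m-1]_q=(q;q)_{m-1}/(1-q)$ are exactly the paper's identities $\beta_{m,j}-[m]_q=-[j]_q$ and $\alpha_{m,j}[j]_q=q[m-1]_q\alpha_{m-1,j-1}$, and your derivation of the $\mathcal K_n^{(m)}$ formula by splitting off the $j=m-1$ term is the step the paper leaves as ``follows easily.''
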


\begin{proof}
    We will show equation~\eqref{eq:H-closed-form} by induction on $m$.
    For concision, we write
    \begin{equation}
        \beta_{m,j} := q^{j}[m-j]_q, \quad \text{ and } \quad \alpha_{m,j} = q^{j} \frac{(q;q)_{m-1}}{(q;q)_{j}}.
    \end{equation}
    Define
    \begin{equation}
        \mathcal G_n^{(m)} := \sum_{j=0}^{m-1} \alpha_{m,j}\beta_{m,j}^n,
    \end{equation}
    which is the right-hand side of equation~\eqref{eq:H-closed-form}.

    It is easy to verify the following identities:
    \begin{equation}\label{eq:first-identity}
        \beta_{m,j} - [m]_q = - [j]_q \quad \text{ and } \quad \beta_{m,j} = q\beta_{m-1,j-1},
    \end{equation}
    and for $1 \leq j \leq m-1$,
    \begin{equation}\label{eq:second-identity}
        \alpha_{m,j}[j]_q = q[m-1]_q\alpha_{m-1,j-1}.
    \end{equation}

    Now we proceed with our induction on $m$. 
    For the base case of $m=1$, $\mathcal G_n^{(1)} = 1 = \mathcal H_n^{(1)}$.
    For the inductive step, assume that $\mathcal G_n^{(m-1)} = \mathcal H_n^{(m-1)}$ for all $n \geq 0$. 
    In the case that $n=0$, we have
    \begin{equation}
        \mathcal G_0^{(m)} = \sum_{j=0}^{m-1} \alpha_{m,j} = (q;q)_{m-1} \sum_{i=0}^{m-1} \frac{q^i}{(q;q)_i} = 1,
    \end{equation}
    by equation~\eqref{eq:pochhammer}.
    Next, by equations~(\ref{eq:first-identity}, \ref{eq:second-identity}) and $\beta_{m,j}^{n-1} = q^{n-1} \beta_{m-1,j-1}^{n-1}$, we write
    \begin{equation}
    \begin{aligned}
        \mathcal G_n^{(m)} - [m]_q \mathcal G_{n-1}^{(m)} &=\sum_{j=0}^{m-1} (\beta_{m,j} - [m]_q) \alpha_{m,j}\beta_{m,j}^{n-1}\\ 
        &=  -\sum_{j=0}^{m-1} [j]_q \alpha_{m,j}q^{n-1} \beta_{m-1,j-1}^{n-1}\\
        &= -q^n [m-1]_q \sum_{j=0}^{m-2}  \alpha_{m-1,j} \beta_{m-1,j}^{n-1}\\
        &= -q^n [m-1]_q \mathcal G_{n-1}^{(m-1)}.
    \end{aligned}
    \end{equation}
    Thus $\mathcal G_n^{(m)}$ satisfies the same recurrence relation and initial conditions as $\mathcal H_n^{(m)}$ (Proposition~\ref{prop:H-recurrence}), so $\mathcal G_n^{(m)} = \mathcal H_n^{(m)}$.

    Now  the second claim follows easily using
    \begin{equation}
        \mathcal K_n^{(m)} = \frac{\mathcal H_n^{(m)} - 1}{1-q^{m-1}}.
    \end{equation}
\end{proof}

At $m=2$, we immediately deduce the following (this recovers Theorem~\ref{thm:main-theorem}).
\begin{corollary}\label{cor:2^n-n-1}
For $n \geq 1$,
\begin{equation}
\mathcal H_n^{(2)} = (1-q)[2]_q^n + q^{n+1}, \quad \text{ and } \quad
\mathcal K_n^{(2)} = [2]_q^n - [n+1]_q.
\end{equation}
\end{corollary}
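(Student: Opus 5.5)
This is a direct specialization of Theorem~\ref{thm:H-K-closed-form} at $m=2$; the plan is just to plug in and simplify.

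For $\mathcal H_n^{(2)}$, take equation~\eqref{eq:H-closed-form} with $m=2$. The prefactor is $(q;q)_1 = 1-q$, and the sum runs over $j\in\{0,1\}$.
\begin{itemize}
\item The $j=0$ term is $[2]_q^n$.
\item The $j=1$ term is $\frac{q^{n+1}}{1-q}[1]_q^n = \frac{q^{n+1}}{1-q}$.
\end{itemize}
Multiplying through by $1-q$ gives $\mathcal H_n^{(2)} = (1-q)[2]_q^n + q^{n+1}$.

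For $\mathcal K_n^{(2)}$, take equation~\eqref{eq:K-closed-form} with $m=2$. Here $(q;q)_0=1$ and only the $j=0$ term survives, giving $[2]_q^n$. The subtracted term is $[n+1]_{q^{1}}=[n+1]_q$. So $\mathcal K_n^{(2)}=[2]_q^n-[n+1]_q$.

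As a consistency check, I would verify directly that $(\mathcal H_n^{(2)}-1)/(1-q)$ equals this. This reduces to
\[
\frac{q^{n+1}-1}{1-q} = -[n+1]_q ,
\]
which holds.

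To tie this back to Theorem~\ref{thm:main-theorem}, expand $(1-q)(1+q)^n + q^{n+1}$. The coefficient of $q^i$ for $1\le i\le n$ is $\binom{n}{i}-\binom{n}{i-1}$. The constant term is $1$. The $q^{n+1}$ term cancels, since $-\binom{n}{n}+1=0$.

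There is no real obstacle here. The only points needing care are the conventions $(q;q)_0=1$ and $[n+1]_{q^{m-1}}$ at $m=2$, and checking that the $n\ge1$ hypothesis is compatible with the theorem's range $n\ge0$.
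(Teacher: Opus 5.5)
Your proposal is correct and follows the paper's route exactly: the paper obtains this corollary as an immediate specialization of Theorem~\ref{thm:H-K-closed-form} at $m=2$. Your evaluation of the terms, using $(q;q)_1=1-q$, $(q;q)_0=1$ and $[n+1]_{q^{1}}=[n+1]_q$, is right, and your expansion of $(1-q)(1+q)^n+q^{n+1}$ correctly recovers Theorem~\ref{thm:main-theorem}, as the paper notes.
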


At the $q=1$ specialization, we deduce the following.
\begin{corollary}
For $n \geq 1$ and $m \geq 2$,
\begin{equation}
\mathcal H_n^{(m)}|_{q=1} = 1, \quad \text{ and } \quad
\mathcal K_n^{(m)}|_{q=1} = 2^n - n- 1.
\end{equation}
\end{corollary}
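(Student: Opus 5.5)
The plan is to specialize the closed forms of Theorem~\ref{thm:H-K-closed-form} at $q=1$. Since these are rational expressions that we know to be polynomials, we will take limits carefully rather than substituting blindly.

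For $\mathcal H_n^{(m)}$ we work directly with equation~\eqref{eq:H-closed-form}. Write $(q;q)_{m-1}/(q;q)_j = \prod_{i=j+1}^{m-1}(1-q^i)$, which is a polynomial in $q$. It vanishes at $q=1$ whenever $j<m-1$, because then the product is nonempty. So only the $j=m-1$ term survives, and it equals $q^{(m-1)(n+1)}[1]_q^n = 1$ at $q=1$. This gives $\mathcal H_n^{(m)}|_{q=1}=1$. It is also consistent with $\mathcal H_n^{(m)}$ being the evaluation of a polynomial: we have $\mathcal H_n^{(m)} = \Hilb(R_n^{(1,1)};q;u)|_{u=-q^m}$, so at $q=1$ it equals $\Hilb(R_n^{(1,1)};1;-1)$. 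A cross-check would be $\sum_k (-1)^{n-k}k!S(n,k) = 1$, a classical identity.

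For $\mathcal K_n^{(m)}$ we cannot simply divide $\mathcal H_n^{(m)}-1$ by $1-q^{m-1}$ at $q=1$, since both numerator and denominator vanish. We use equation~\eqref{eq:K-closed-form} instead. Here $\mathcal K_n^{(m)} = Q_n(q,-q^m)$ is manifestly a polynomial, and the right-hand side of equation~\eqref{eq:K-closed-form} is a sum of polynomials in $q$. Applying the same vanishing argument to $(q;q)_{m-2}/(q;q)_j=\prod_{i=j+1}^{m-2}(1-q^i)$, only the $j=m-2$ term survives at $q=1$. That term is $q^{(m-2)(n+1)}[2]_q^n$, which equals $2^n$ at $q=1$. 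Meanwhile $[n+1]_{q^{m-1}}$ evaluates to $n+1$. Hence $\mathcal K_n^{(m)}|_{q=1}=2^n-n-1$.

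The only point needing care is the formal validity of evaluating equation~\eqref{eq:K-closed-form} at $q=1$. Each quotient $(q;q)_{m-2}/(q;q)_j$ is a genuine polynomial product, so every term is a polynomial and evaluation commutes with the finite sum. As a sanity check, at $m=2$ the result matches Corollary~\ref{cor:2^n-n-1}. Combinatorially it also reads as $\sum_{w \text{ type I}} (-1)^{n-\blocks(w)} = 2^n-n-1$, which one could alternatively derive from Proposition~\ref{prop:sylvie} by summing $\binom{n}{\ell}-1$ over $1\le \ell\le n$. That second route is independent of $m$ because setting $q=1$ in $Q_n(q,-q^m)$ gives $Q_n(1,-1)$ for every $m$, which explains why the answer does not depend on $m$.
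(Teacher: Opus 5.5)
Your proof is correct and follows the paper's intended argument: set $q=1$ in the closed forms of Theorem~\ref{thm:H-K-closed-form}. There, every term with $j<m-1$ (respectively $j<m-2$) carries a vanishing polynomial factor $\prod_{i=j+1}^{m-1}(1-q^i)$ (respectively $\prod_{i=j+1}^{m-2}(1-q^i)$), which leaves $1$ and $2^n-(n+1)$. Your side remark that $\mathcal K_n^{(m)}|_{q=1}=Q_n(1,-1)=\sum_{\ell=1}^n\bigl(\binom{n}{\ell}-1\bigr)$ via Proposition~\ref{prop:sylvie} is also correct, and it nicely explains why the answer does not depend on $m$.
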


\section{Coefficients of \texorpdfstring{$\mathcal K_n^{(m)}$}{Kn(m)}}\label{sec:coefficients}

Recall from Theorem~\ref{thm:H-K-closed-form} that $\mathcal K_n^{(m)}$ contains the essential content of the Hilbert series specialization $\mathcal H_n^{(m)}$.
Since $\mathcal K_n^{(m)}$ is a polynomial in $q$, a natural question is to study the coefficients $u_\ell(n,m) := \langle q^\ell \rangle \mathcal K_n^{(m)}$.
In this section, we determine the support of $\mathcal K_n^{(m)}$ and compute its coefficients at extreme low and high degrees.
We first bound the maximum degree for which $\mathcal K_n^{(m)}$ is supported.

\begin{proposition}\label{prop:degbound}
    For all $n\geq 0$ and $m\geq 1$ we have $\deg_q \mathcal H_n^{(m)}\le(m-1)n$. 
    For all $n\geq 0$ and $m\geq 2$ we have $\deg_q \mathcal K_n^{(m)}\le(m-1)(n-1)$. 
    Consequently, for $m \geq 2$, $u_\ell(n,m)=0$ whenever $\ell>(m-1)(n-1)$.
\end{proposition}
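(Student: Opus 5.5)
Induction on $n$ via the recurrence of Proposition~\ref{prop:H-recurrence} handles $\mathcal H_n^{(m)}$ directly; for $\mathcal K_n^{(m)}$ I will work with the numerator $\mathcal H_n^{(m)}-1$, using equation~\eqref{eq:K-fraction} and the fact that $1-q^{m-1}$ is monic of degree $m-1$.

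\emph{Bound for $\mathcal H_n^{(m)}$.} I induct on $m\geq 1$, and within each $m$ on $n\geq 0$. The base cases $\mathcal H_n^{(1)}=1$ and $\mathcal H_0^{(m)}=1$ have degree $0$, which meets the bound. For the inductive step I use
\[
\mathcal H_n^{(m)} = [m]_q \mathcal H_{n-1}^{(m)} - q^n [m-1]_q \mathcal H_{n-1}^{(m-1)}.
\]
The first term has degree at most $(m-1)+(m-1)(n-1)=(m-1)n$. The second term has degree at most $n+(m-2)+(m-2)(n-1)=(m-1)n$. Hence $\deg_q \mathcal H_n^{(m)}\le (m-1)n$.

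\emph{Bound for $\mathcal K_n^{(m)}$.} Fix $m\ge 2$. Since $\mathcal K_n^{(m)}=Q_n(q,-q^m)$ is a polynomial and
\[
(1-q^{m-1})\,\mathcal K_n^{(m)} = \mathcal H_n^{(m)}-1,
\]
degrees add, giving $\deg_q \mathcal K_n^{(m)} = \deg_q(\mathcal H_n^{(m)}-1)-(m-1)$. When $n\geq 1$, the previous paragraph gives $\deg_q(\mathcal H_n^{(m)}-1)\le (m-1)n$, so $\deg_q \mathcal K_n^{(m)}\le (m-1)(n-1)$. When $\mathcal K_n^{(m)}=0$, for instance $n=0$ where $Q_0=0$, the bound holds trivially.

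The only subtle point is making sure this degree subtraction is legitimate: $\mathcal K_n^{(m)}$ is a genuine polynomial because $Q_n$ is, and the factor $1-q^{m-1}$ has nonzero top coefficient, so no cancellation occurs. The statement about $u_\ell(n,m)$ is then immediate, since every coefficient above degree $(m-1)(n-1)$ must vanish.
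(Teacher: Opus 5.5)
Your proof is correct and follows essentially the paper's argument: an induction bounding $\deg_q \mathcal H_n^{(m)}$ through the recurrence, then the identity $(1-q^{m-1})\mathcal K_n^{(m)}=\mathcal H_n^{(m)}-1$ to subtract $m-1$ from the degree. The only difference is cosmetic: you bound the two terms of the combined recurrence of Proposition~\ref{prop:H-recurrence} directly, while the paper goes through $\mathcal A_n^{(m)}$ using Lemmas~\ref{lem:recurrence1} and~\ref{lem:recurrence2} and a divisibility-by-$(1-q)$ step, which is the same bookkeeping (also, $1-q^{m-1}$ has leading coefficient $-1$ and so is not monic, but as you later note, only a nonzero leading coefficient is needed).
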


\begin{proof}
    We prove $\deg_q\mathcal H_n^{(m)}\leq(m-1)n$ for all $m\geq1$ simultaneously, by induction on $n$.
    For the base case of $n=0$, $\mathcal H_0^{(m)} = 1$ has degree $0$.

    Assume the bound holds at $n$ for every $m \geq 1$. Fix $m$.
    If $m = 1$, then $\mathcal H_{n+1}^{(1)} = 1$ has degree $0 = (1-1)(n+1)$. If $m \geq 2$, the inductive hypothesis gives $\deg_q\mathcal H_n^{(m)}\leq(m-1)n$ and $\deg_q\mathcal H_n^{(m-1)}\leq(m-2)n$, hence we write
    \begin{equation}
        \deg_q \left(\mathcal H_n^{(m)} - q^n \mathcal H_n^{(m-1)} \right) \leq \max \{ (m-1)n, n+ (m-2)n\} = (m-1)n.
    \end{equation}
    By Lemma~\ref{lem:recurrence2}, $\mathcal H_n^{(m)} - q^n \mathcal H_n^{(m-1)}$ is divisible as a polynomial by $1-q$, and $\deg_q \mathcal A_n^{(m)}\leq(m-1)n-1$.
    Then by Lemma~\ref{lem:recurrence1},
    \begin{equation}
    \begin{aligned}
        \deg_q\mathcal H_{n+1}^{(m)} &\leq  \max \{ m + \deg_q \mathcal A_n^{(m)}, n + \deg_q \mathcal H_n^{(m-1)} \}\\
        &\leq \max \{ (m-1)n + (m-1), (m-1)n\} = (m-1)(n+1).
    \end{aligned}
    \end{equation}
    This completes the induction.

    Finally, by equation~\eqref{eq:K-fraction} we have that $\mathcal K_n^{(m)} = (\mathcal H_n^{(m)}-1)/(1-q^{m-1})$, which is a polynomial, so 
    \begin{equation}
        \deg_q \mathcal K_n^{(m)}=\deg_q(\mathcal H_n^{(m)}-1)-(m-1)\leq(m-1)n-(m-1)=(m-1)(n-1).
    \end{equation}
    Hence $u_\ell(n,m)=\langle q^\ell\rangle \mathcal K_n^{(m)}=0$ for $\ell>(m-1)(n-1)$.
\end{proof}

We now compute the low-degree coefficients in two ways.

\begin{proposition}\label{prop:u_ell-values}
    For $m\geq 1$ we have $u_0(n,m)=0$ and $u_\ell(n,m)=\langle q^\ell\rangle[n]_q!$ for $1\leq\ell<m$.
\end{proposition}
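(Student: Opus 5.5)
The plan is to read the low-degree coefficients directly from the expansion
\begin{equation*}
\mathcal K_n^{(m)} = Q_n(q,-q^m) = \sum_{a\geq 1,\, b\geq 0} c_{(a,1^b)}(n)\,(-1)^b q^{a+mb}.
\end{equation*}
This expansion is the definition of $\mathcal K_n^{(m)}$ in equation~\eqref{eq:K-definition}, and it holds for every $m\geq 1$; it is not only the $m\geq 2$ form in equation~\eqref{eq:K-fraction}. The exponent $a+mb$ is at least $a\geq 1$, so there is no constant term and $u_0(n,m)=0$.

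For $1\leq \ell<m$, the terms with $b\geq 1$ have exponent $a+mb\geq 1+m>\ell$, so only $b=0$ contributes. Hence $u_\ell(n,m)=c_{(\ell)}(n)$. By Corollary~\ref{cor:coefficient-b=0} this equals $\langle q^\ell\rangle [n]_q!$, which is the claim.

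As a cross-check, I would also give the second, algebraic derivation that the phrase ``in two ways'' suggests, using Theorem~\ref{thm:H-K-closed-form}. For $m\geq2$ we have $\mathcal H_n^{(m)} = \sum_k (-1)^{n-k} q^{m(n-k)} S^o[n,k]$. Modulo $q^m$, only the $k=n$ term survives, so $\mathcal H_n^{(m)} \equiv [n]_q! \pmod{q^m}$. Then
\begin{equation*}
\mathcal K_n^{(m)} = \frac{\mathcal H_n^{(m)}-1}{1-q^{m-1}} \equiv \bigl([n]_q!-1\bigr)\bigl(1+q^{m-1}\bigr) \pmod{q^m}.
\end{equation*}
Because $[n]_q!-1$ has no constant term, the correction term $q^{m-1}([n]_q!-1)$ only affects degrees $\geq m$. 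This again gives $u_\ell(n,m)=\langle q^\ell\rangle [n]_q!$ for $1\leq \ell<m$, and $u_0=0$.

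The main point needing care is the case $m=1$. There the range $1\leq\ell<m$ is empty, so only $u_0=0$ has to be checked, and the first argument handles it uniformly. The algebraic route needs $m\geq 2$ because of the division by $1-q^{m-1}$, so I would rely on the combinatorial argument as the primary proof.
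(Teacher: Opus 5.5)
Your primary argument is correct and is essentially the paper's proof. The paper expands $\mathcal K_n^{(m)}$ over type I ordered set partitions, notes that $\inv(w)\geq 1$, and notes that $\ell<m$ forces $\blocks(w)=n$, i.e.\ permutations with at least one inversion; this is exactly your observation that $a\geq 1$ and $b=0$ are forced, followed by Corollary~\ref{cor:coefficient-b=0}. Your algebraic cross-check via $\mathcal H_n^{(m)}\equiv [n]_q! \pmod{q^m}$ is also valid for $m\geq 2$, though it uses \eqref{eq:for-lemma-r1} and \eqref{eq:K-fraction} rather than the closed form of Theorem~\ref{thm:H-K-closed-form}, despite what you say.
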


\begin{proof}
    By Theorem~\ref{thm:improved-coefficient-formula}, $c_{(a,1^b)}(n)$ counts type I ordered set partitions $w$ in $\OSP(n,n-b)$ with $\inv(w) = a$. Since $n-b = \blocks(w)$, we write
    \begin{equation}
        \mathcal K_n^{(m)} = \sum_{\substack{w \in \OSP(n),\\
        w \text { is type I}}} (-1)^{n-\blocks(w)} q^{\inv(w) + m(n-\blocks(w))}.
    \end{equation}
    Thus
    \begin{equation}\label{eq:contributing}
        u_\ell(n,m) = \sum_{\substack{w \in \OSP(n),\\
        w \text { is type I}, \\ \ell = \inv(w)+ m(n-\blocks(w))}} (-1)^{n-\blocks(w)}.
    \end{equation}
    Every type I ordered set partition has $\inv(w)\geq 1$; so $u_0(n,m)=0$ for $m\geq 1$.

    Fix $1 \leq \ell < m$. 
    Suppose that $w$ contributes to the sum.
    If $n -\blocks(w) \geq 1$, then $\ell = \inv(w) + m(n-\blocks(w)) \geq m > \ell$, a contradiction; hence $\blocks(w) = n$.
    These are exactly the ordered set partitions of all singleton blocks, which can be identified with the set of permutations of $n$, $\mathfrak{S}_n$, and they all contribute with a sign of $+1$, exactly when $\inv(w) = \ell$.
    Hence
    \begin{equation}
        u_\ell(n,m) = \#\{ w\in \mathfrak{S}_n \, | \, \inv(w) =\ell, w \text{ is type I}\}.
    \end{equation}
    Now a permutation is a type I ordered set partition if and only if it has at least one inversion. Hence for $1 \leq \ell < m$, we have
    \begin{equation}
        u_\ell(n,m) = \langle q^\ell \rangle [n]_q!.
    \end{equation}
\end{proof}

We establish next another formula for the low-degree coefficients for a different bound on $\ell$.
\begin{proposition}\label{prop:low-degree-coefficients}
    For $1 \leq \ell \leq n$ and $m \geq 2$,
\begin{equation}
    u_\ell(n,m)
    = \langle q^\ell \rangle \left( (q;q)_{m-2} [m]_q^n \right)
    - \begin{cases}
        1 & \text{if } m-1 \mid \ell, \\
        0 & \text{otherwise.}
    \end{cases}
\end{equation}
In particular, $u_1(n,m) = n-1$ for all $m \geq 2$.
\end{proposition}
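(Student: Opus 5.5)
The plan is to read the coefficients directly off the closed form for $\mathcal K_n^{(m)}$ in equation~\eqref{eq:K-closed-form} of Theorem~\ref{thm:H-K-closed-form}. The combinatorial description in equation~\eqref{eq:contributing} is not needed. For $m \geq 2$ that closed form reads
\begin{equation*}
\mathcal K_n^{(m)} = (q;q)_{m-2}[m]_q^n + \sum_{j=1}^{m-2} q^{j(n+1)}\frac{(q;q)_{m-2}}{(q;q)_j}[m-j]_q^n - [n+1]_{q^{m-1}}.
\end{equation*}
Here the $j=0$ term has been separated out, using $(q;q)_0 = 1$. When $m=2$ the middle sum is empty.

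The first step is to show that the middle sum does not contribute to $\langle q^\ell\rangle$ when $\ell \leq n$. For $1 \leq j \leq m-2$, the ratio $(q;q)_{m-2}/(q;q)_j = \prod_{i=j+1}^{m-2}(1-q^i)$ is a polynomial in $q$. The factor $[m-j]_q^n$ is also a polynomial. So the $j$th summand is $q^{j(n+1)}$ times a polynomial, and every monomial in it has degree at least $n+1 > \ell$. Hence
\begin{equation*}
\langle q^\ell\rangle \mathcal K_n^{(m)} = \langle q^\ell\rangle\big((q;q)_{m-2}[m]_q^n\big) - \langle q^\ell\rangle [n+1]_{q^{m-1}}.
\end{equation*}

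The second step handles the last term. We have $[n+1]_{q^{m-1}} = \sum_{i=0}^{n} q^{(m-1)i}$. Its coefficient of $q^\ell$ is $1$ exactly when $\ell = (m-1)i$ for some $0 \leq i \leq n$, and $0$ otherwise. If $m-1 \mid \ell$, then $i = \ell/(m-1) \leq \ell \leq n$ because $m-1 \geq 1$. So the range restriction on $i$ is automatic, and the coefficient is $1$ if and only if $m-1 \mid \ell$. This gives the stated formula.

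For the special case $\ell = 1$ (which is allowed since $n \geq 1$), I expand to first order.
\begin{itemize}
\item The factor $[m]_q^n = (1+q+\cdots)^n$ contributes $1 + nq + O(q^2)$, since $m \geq 2$.
\item The factor $(q;q)_{m-2}$ is $1 - q + O(q^2)$ if $m \geq 3$, and is $1$ if $m=2$.
\item The divisibility $m-1 \mid 1$ holds only when $m=2$.
\end{itemize}
For $m=2$ this gives $n - 1$. For $m \geq 3$ it gives $(n-1) - 0 = n-1$. So $u_1(n,m) = n-1$ in all cases.

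There is no real obstacle here. The only point needing care is the degree bound $j(n+1) > \ell$ that kills the $j \geq 1$ terms, which is exactly where the hypothesis $\ell \leq n$ is used. One should also keep track of the $m=2$ versus $m\geq 3$ split in the $\ell = 1$ computation.
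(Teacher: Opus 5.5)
Your proposal is correct and is the same argument as the paper, whose one-line proof observes that for $\ell \le n$ only the $j=0$ summand of \eqref{eq:K-closed-form} contributes. You have filled in the details the paper leaves implicit: the $q^{j(n+1)}$ degree bound that kills the $j \ge 1$ terms, the coefficient extraction from $[n+1]_{q^{m-1}}$, and the $m=2$ versus $m \ge 3$ check for $u_1(n,m)=n-1$.
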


\begin{proof}
This is straightforward as for $\ell\le n$, only the summand $j=0$
in \eqref{eq:K-closed-form} contributes.
\end{proof}

At the opposite end of the support, we compute the leading coefficient.
\begin{proposition}\label{prop:leading-coefficient}
    For $n \geq 1$ and $m \geq 1$,
    \begin{equation}
        \langle q^{(m-1)n} \rangle \mathcal H_n^{(m)} = (-1)^{m-1} \binom{n-1}{m-1},
    \end{equation}
    hence for $m \geq 2$,
    \begin{equation}
        \langle q^{(m-1)(n-1)} \rangle \mathcal K_n^{(m)} = (-1)^{m} \binom{n-1}{m-1}.
    \end{equation}
    In particular, for $m \geq 2$, $\deg_q \mathcal K_n^{(m)} = (m-1)(n-1)$ exactly when $m \leq n$.
\end{proposition}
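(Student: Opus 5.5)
Using the closed form of Theorem~\ref{thm:H-K-closed-form}, we read off the coefficient of $q^{(m-1)n}$ in each summand
\[
(q;q)_{m-1}\frac{q^{j(n+1)}}{(q;q)_j}[m-j]_q^n = q^{j(n+1)}\,(q^{j+1};q)_{m-1-j}\,[m-j]_q^n,
\]
where $(q^{j+1};q)_{m-1-j}=\prod_{i=j+1}^{m-1}(1-q^i)$.

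First, compute degrees. The summand has degree
\[
j(n+1)+\sum_{i=j+1}^{m-1}i+(m-1-j)n .
\]
The difference between this and $(m-1)n$ is
\[
j+\tbinom{m}{2}-\tbinom{j+1}{2}=\tbinom{m}{2}-\tbinom{j}{2}\ge 0 .
\]
So the summands have degree above $(m-1)n$, and extracting only top coefficients is not enough. Instead we use the recurrence of Proposition~\ref{prop:H-recurrence} together with the degree bound of Proposition~\ref{prop:degbound}.

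Write $L_n^{(m)}:=\langle q^{(m-1)n}\rangle \mathcal H_n^{(m)}$. Proposition~\ref{prop:degbound} says this is the coefficient in the maximal allowed degree. In
\[
\mathcal H_n^{(m)} = [m]_q \mathcal H_{n-1}^{(m)} - q^n [m-1]_q \mathcal H_{n-1}^{(m-1)},
\]
the first term has degree at most $(m-1)+(m-1)(n-1)=(m-1)n$, with top coefficient $L_{n-1}^{(m)}$. The second term has degree at most $n+(m-2)+(m-2)(n-1)=(m-1)n$, with top coefficient $-L_{n-1}^{(m-1)}$. Hence
\[
L_n^{(m)}=L_{n-1}^{(m)}-L_{n-1}^{(m-1)}.
\]
The initial values are $L_n^{(1)}=1$, since $\mathcal H_n^{(1)}=1$, and $L_0^{(m)}=1$, since $\mathcal H_0^{(m)}=1$ has degree $0=(m-1)\cdot 0$.

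The main obstacle is that $L_0^{(m)}=1$ does not match $(-1)^{m-1}\binom{-1}{m-1}=1$ in a clean way, so it is safer to start at $n=1$. We have $\mathcal H_1^{(m)}=[m]_q-q[m-1]_q=1$, so $L_1^{(m)}=\delta_{m,1}$, which equals $(-1)^{m-1}\binom{0}{m-1}$. For $n\ge2$ we induct using the recurrence; Pascal's rule gives
\[
(-1)^{m-1}\tbinom{n-2}{m-1}+(-1)^{m-1}\tbinom{n-2}{m-2}=(-1)^{m-1}\tbinom{n-1}{m-1}.
\]
This closes the induction on $n$ for all $m\ge1$ simultaneously, with the $m=1$ case as a separate base.

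For $\mathcal K_n^{(m)}$, we use $\mathcal H_n^{(m)}-1=(1-q^{m-1})\mathcal K_n^{(m)}$ together with $\deg\mathcal K_n^{(m)}\le (m-1)(n-1)$. For $n\ge1$ and $m\ge2$ the degree $(m-1)n$ is positive, so the top coefficient of $\mathcal H_n^{(m)}-1$ is $L_n^{(m)}$, and it also equals $-\langle q^{(m-1)(n-1)}\rangle\mathcal K_n^{(m)}$. This gives
\[
\langle q^{(m-1)(n-1)}\rangle\mathcal K_n^{(m)}=(-1)^m\tbinom{n-1}{m-1}.
\]
Since $\binom{n-1}{m-1}\neq 0$ if and only if $m\le n$, and Proposition~\ref{prop:degbound} bounds the degree by $(m-1)(n-1)$, the final sentence of the statement follows.
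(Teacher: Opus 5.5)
Your argument is correct and is essentially the paper's proof. You extract the coefficient of $q^{(m-1)n}$ from the recurrence of Proposition~\ref{prop:H-recurrence}, using the degree bound of Proposition~\ref{prop:degbound}, to get $L_n^{(m)}=L_{n-1}^{(m)}-L_{n-1}^{(m-1)}$; you seed it with $L_n^{(1)}=1$ and $L_1^{(m)}=\delta_{m,1}$, close with Pascal's rule, and pass to $\mathcal K_n^{(m)}$ via $(1-q^{m-1})\mathcal K_n^{(m)}=\mathcal H_n^{(m)}-1$. The opening closed-form digression and your worry about $L_0^{(m)}$ are unnecessary (in fact $(-1)^{m-1}\binom{-1}{m-1}=1$ does match), but neither affects the proof.
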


\begin{proof}
Write $L(n,m) := \langle q^{(m-1)n} \rangle \mathcal H_n^{(m)}$.
Since $[m]_q$ is monic of degree $m-1$ and $\deg_q \mathcal H_{n-1}^{(m)} \leq (m-1)(n-1)$ by Proposition~\ref{prop:degbound}, the product $[m]_q\mathcal H_{n-1}^{(m)}$ has degree at most $(m-1)n$, and $\langle q^{(m-1)n} \rangle [m]_q\mathcal H_{n-1}^{(m)} = L(n-1,m)$.
Similarly, $q^n[m-1]_q$ is monic of degree $n+m-2$ and $\deg_q \mathcal H_{n-1}^{(m-1)} \leq (m-2)(n-1)$, so $q^n[m-1]_q \mathcal H_{n-1}^{(m-1)}$ has degree at most $(m-1)n$, and $\langle q^{(m-1)n} \rangle q^n[m-1]_q\mathcal H_{n-1}^{(m-1)} = L(n-1,m-1)$.
Extracting the coefficient of $q^{(m-1)n}$ from the recurrence of Proposition~\ref{prop:H-recurrence} gives
    \begin{equation}
        L(n,m) = L(n-1,m) - L(n-1,m-1).
    \end{equation}
With base cases $L(n,1) = 1$ and $L(1,m) = 0$ for $m \geq 2$, induction on $n$ using Pascal's rule gives $L(n,m) = (-1)^{m-1} \binom{n-1}{m-1}$.

Finally, $(1-q^{m-1})\mathcal K_n^{(m)} = \mathcal H_n^{(m)} -1$ 
together with the bound $\deg_q \mathcal K_n^{(m)} \leq (m-1)(n-1)$ gives $\langle q^{(m-1)(n-1)} \rangle \mathcal K_n^{(m)}  = - L(n,m) = (-1)^m \binom{n-1}{m-1}$, which is nonzero if and only if $m -1 \leq n-1$. 
Combining it with the same degree bound shows that $\deg_q \mathcal K_n^{(m)} = (m-1)(n-1)$ exactly when $m \leq n$.
\end{proof}

\section{Generating functions \texorpdfstring{$\sum_a c_{(a,1^b)}(n) q^a$}{c(a,1b)(n)}}\label{sec:row-gf}

In this section, we work with a new generating function for the coefficients $c_{(a,1^b)}(n)$; our goal is to prove the following result.

\begin{theorem}\label{thm:unsigned-gf}
    Fix $n \geq 1$. For $0 \leq b \leq n-1$, we have the generating function 
    \begin{equation}
        \sum_{a \geq 1} c_{(a,1^b)}(n)q^a = q^{1- \binom{n-b-1}{2}} \sum_{j=0}^{n-b-1}(-1)^j q^{\binom{j}{2}} \qbinom{n-b}{j}_q  [n-b-j-1]_q^n.
    \end{equation}
\end{theorem}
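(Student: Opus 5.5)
We plan to argue by a recurrence in $b$, and never to enumerate type I ordered set partitions directly. Write $C_b(q) := \sum_{a\ge 1} c_{(a,1^b)}(n)q^a$, and let $T_b(q)$ denote the right-hand side of Theorem~\ref{thm:unsigned-gf}. We will show that $C_b$ and $T_b$ satisfy the same first-order recurrence in $b$ and agree at the endpoint $b=n-1$. Then downward induction on $b$ gives $C_b=T_b$ for all $0\le b\le n-1$.

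\emph{Step 1: the recurrence for $C_b$.} We start from the coefficient extraction in the proof of Lemma~\ref{lem:c_{(a,1^b)}}. For $b\ge 1$, comparing coefficients of $q^a u^b$ for \emph{all} $a\ge 0$ gives
\begin{equation*}
c_{(a,1^b)}(n)+c_{(a+1,1^{b-1})}(n)=\langle q^a\rangle S^o[n,n-b],
\end{equation*}
where $c_{(0,1^b)}(n):=0$. The case $a=0$ comes from the term $q^{0}u^{b}$ of $s_{(1,1^{b-1})}(q/u)$. Multiplying by $q^{a+1}$ and summing over $a$ gives
\begin{equation*}
qC_b(q)+C_{b-1}(q)=q\,S^o[n,n-b]\qquad (1\le b\le n-1).
\end{equation*}
For the endpoint, Proposition~\ref{prop:nonzero-support} gives $c_{(a,1^{n-1})}(n)=0$ for every $a$, so $C_{n-1}=0$. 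The recurrence then determines $C_{n-2},\dots,C_0$ uniquely.

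\emph{Step 2: the same recurrence for $T_b$.} Put $k=n-b$ and
\begin{equation*}
F(k,r):=\sum_{j\ge 0}(-1)^j q^{\binom j2}\qbinom{k}{j}_q[r-j]_q^n .
\end{equation*}
The terms with $j>r$ vanish: such $j$ either exceed $k$ or give $[r-j]_q=0$ with $n\ge 1$. With this notation, Lemma~\ref{lem:SO-closed-form} reads $S^o[n,k]=q^{-\binom k2}F(k,k)$, and the target is $T_b=q^{1-\binom{k-1}{2}}F(k,k-1)$. At $b=n-1$, i.e.\ $k=1$, we have $T_{n-1}=q\,[0]_q^n-q\cdot 0=0$, which matches $C_{n-1}$. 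For $1\le b\le n-1$, the identity $qT_b+T_{b-1}=qS^o[n,n-b]$ becomes, after multiplying through by $q^{\binom k2-1}$ and using $\binom k2-\binom{k-1}2=k-1$,
\begin{equation*}
q^{k}F(k,k-1)+F(k+1,k)=F(k,k).
\end{equation*}

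\emph{Step 3: proving this identity.} This is the only computational step. We apply the $q$-Pascal rule $\qbinom{k+1}{j}_q=\qbinom{k}{j}_q+q^{k+1-j}\qbinom{k}{j-1}_q$ inside $F(k+1,k)$. The first part reproduces $F(k,k)$. In the second part we reindex $j=i+1$ and use $\binom{i+1}{2}-i=\binom i2$. It becomes
\begin{equation*}
-\sum_i(-1)^i q^{\binom i2+k}\qbinom{k}{i}_q[k-1-i]_q^n=-q^kF(k,k-1),
\end{equation*}
which is exactly the required identity.

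This completes the downward induction. The only delicate points are the boundary case $a=0$ in Step 1 and checking that no stray terms appear at the ends of the summation range in Step 3.
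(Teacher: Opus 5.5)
Your proof is correct, and it takes a genuinely different route from the paper's.

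The paper goes through the combinatorial interpretation:
\begin{enumerate}
\item It introduces the type I generating function $S^I[n,k]$ and proves the diagonal recurrence $S^I[n,k]=S^I[n-1,k-1]+q[k-1]_qS^o[n-1,k-1]$ by stripping $a_n$ off the $(B,A)$ encoding (Lemma~\ref{lem:SI-recurrence}).
\item It checks that $F[n,k]$ satisfies the same recurrence. Because that recurrence lowers the exponent on $[\,\cdot\,]_q$ from $n$ to $n-1$, this needs the splitting $[k-j-1]_q=q^{-j}([k-1]_q-[j]_q)$, Lemma~\ref{lem:SO-closed-form} at $(n-1,k-1)$, and an auxiliary $q$-binomial identity.
\item Only then does it transfer the result to $c_{(a,1^b)}(n)$, via Theorem~\ref{thm:improved-coefficient-formula}.
\end{enumerate}

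You instead fix $n$ and use $qC_b+C_{b-1}=qS^o[n,n-b]$, the generating-function form of the triangular system behind Lemma~\ref{lem:c_{(a,1^b)}}. Since the exponent $n$ never changes, the verification collapses to the single $q$-Pascal identity $F(k+1,k)=F(k,k)-q^kF(k,k-1)$.

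What each route buys:
\begin{itemize}
\item Yours shows the formula is a formal consequence of Theorem~\ref{thm:hilbert} and the hook expansion alone. Even your endpoint $C_{n-1}=0$ can be read off from the coefficient of $q^au^n$, using $S^o[n,0]=0$ for $n\ge 1$. That avoids Proposition~\ref{prop:nonzero-support}, which itself rests on the type I interpretation.
\item The paper's route additionally produces a structural recurrence for type I ordered set partitions, which has independent interest.
\end{itemize}

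One small cleanup. Your claim that all terms with $j>r$ vanish silently assumes $[m]_q=0$ for $m<0$. For instance, the $j=k$ term of $F(k,k-1)$ involves $[-1]_q$, which equals $-q^{-1}$ under the convention $[m]_q=(1-q^m)/(1-q)$. It is cleaner to define $F(k,r)$ as a sum over $0\le j\le r$. With that definition, your reindexing in Step~3 maps $\{1,\dots,k\}$ exactly onto $\{0,\dots,k-1\}$, and everything else goes through verbatim.
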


For $1 \leq k \leq n$, we define
\begin{equation}
    S^I[n,k] := \sum_{a \geq 0}\#\{w \in \OSP(n,k)\, | \, \inv(w) = a, \ w \text{ is type I}  \}\, q^a,
\end{equation}
so that, by Theorem~\ref{thm:improved-coefficient-formula}, $\sum_{a \geq 1} c_{(a,1^b)}(n) q^a = S^I[n,n-b]$.
Recall that 
\begin{equation}
    S^o[n,k]= [k]_q!S[n,k] = \sum_{a \geq 0} \#\{w \in \OSP(n,k)\, | \, \inv(w) = a\}\, q^a,
\end{equation}
for $1 \leq k \leq n$.
We prove the following.

\begin{lemma}\label{lem:SI-recurrence}
For $n\geq k\geq 2$,
\begin{equation}\label{eq:SI-recurrence}
S^I[n,k]=S^I[n-1,k-1]+q[k-1]_q S^o[n-1,k-1],
\end{equation}
with initial conditions $S^I[n,1]=0$ for all $n \geq 1$.
\end{lemma}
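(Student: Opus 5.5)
We work entirely in the $(B,A)$ encoding, where $w\in\OSP(n,k)$ corresponds to $B\subseteq\{2,\ldots,n\}$ with $|B|=n-k$ and an inversion sequence $A$ with $0\le a_i\le \kappa_i = i-1-|B\cap\{1,\ldots,i\}|$, and $\inv(w)=\sum_i a_i$. Recall that $w$ is type I if there is an index $j$ with $a_j>0$, $a_i=0$ for all $i>j$, and $i\notin B$ for all $i\ge j$. The plan is to split the type I elements of $\OSP(n,k)$ according to the behaviour of the last index $n$, namely whether $n\in B$, $a_n=0$, or $a_n>0$.

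\textbf{Case $n\in B$.} Here $j\le n$ would force $n\notin B$, so no type I witness exists. This case contributes nothing.

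\textbf{Case $n\notin B$ and $a_n=0$.} The witness $j$ must satisfy $j<n$. Deleting index $n$ gives a pair $(B,(a_1,\ldots,a_{n-1}))$ with $|B|=n-k$, which encodes an element of $\OSP(n-1,k-1)$; in the ordered set partition picture this removes the singleton block $\{n\}$, which sat at the far right. The bounds $\kappa_i$ for $i<n$ are unchanged, so this is a bijection onto all pairs for $n-1$ with $k-1$ blocks. The type I condition is also preserved in both directions with the same $j$: the conditions on indices $i>j$ only add the requirements $a_n=0$ and $n\notin B$, which hold. The inversion count is preserved, so this case contributes $S^I[n-1,k-1]$.

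\textbf{Case $n\notin B$ and $a_n>0$.} Then $j=n$ is automatically a witness, so every such pair is type I. The prefix $(B,(a_1,\ldots,a_{n-1}))$ is an arbitrary element of $\OSP(n-1,k-1)$, contributing $S^o[n-1,k-1]$. The entry $a_n$ ranges over $1,\ldots,\kappa_n$, where $\kappa_n=n-1-(n-k)=k-1$, contributing $q+\cdots+q^{k-1}=q[k-1]_q$. This case therefore gives $q[k-1]_qS^o[n-1,k-1]$, by the inversion-sequence interpretation of $S^o$ from Proposition~\ref{prop:OSP-inv} transported through the bijection.

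Adding the three cases gives \eqref{eq:SI-recurrence}. For the initial condition, an element of $\OSP(n,1)$ has $\kappa_i=0$ for every $i$. Indeed, $B=\{2,\ldots,n\}$, so $|B\cap\{1,\ldots,i\}|=i-1$. Hence all $a_i=0$ and no witness exists, so $S^I[n,1]=0$.

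The main point needing care is the second case: we must confirm that deletion of index $n$ gives exactly the encoding for $n-1$ with $k-1$ blocks, i.e.\ that $\kappa_i$ for $i<n$ does not depend on $n$, and that the type I witness is the same before and after. Both follow directly from the definitions, so I expect no real obstacle.
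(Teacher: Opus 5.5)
Your proof is correct and is essentially the paper's argument. The paper likewise works in the $(B,A)$ encoding, first observes that type I forces $n\notin B$ (your empty case), and then splits on $a_n=0$ versus $a_n>0$ to obtain $S^I[n-1,k-1]$ and $q[k-1]_qS^o[n-1,k-1]$ using $a_n\le k-1$. The only cosmetic difference is in the initial condition: you check $S^I[n,1]=0$ via $\kappa_i=0$ for $B=\{2,\ldots,n\}$, whereas the paper notes that a one-block partition has no inversions.
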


\begin{proof}
    Since a single-block ordered set partition has no inversions, no ordered set partition in $\OSP(n,1)$ is type I, so $S^I[n,1]=0$.

    For $n \geq k \geq 2$, we use the encoding $w = (B,A)$.
    Recall that $w \in \OSP(n,k)$ corresponds to a pair with $|B| = n-k$ and $\inv(w) = \sum_{i=1}^n a_i$, and that $w$ is type I exactly when there exists an index $j$ with $a_j > 0$, with $a_i = 0$ for all $i > j$, and with $i \notin B$ for all $i \geq j$.
    On the one hand, if $a_n > 0$ then this index is $j=n$, so $n \notin B$. On the other hand, if $a_n = 0$ then the index satisfies $j <n$, so again $n \notin B$. In either case $B \subseteq \{2,\ldots,n-1\}$, hence $(B,(a_1,\ldots,a_{n-1}))$ is an ordered set partition in $\OSP(n-1,k-1)$. Consider again the two cases.
    \begin{itemize}
        \item If $a_n = 0$, then $(B,(a_1,\ldots,a_{n-1}))$ is type I, with the same witnessing index $j < n$ and same number of inversions. This is a bijection onto the type I ordered set partitions in $\OSP(n-1,k-1)$, so it contributes $S^I[n-1,k-1]$.
        \item If $a_n > 0$, then $0 < a_n < n - |B \cap \{1,\ldots,n\}| = k$, so $a_n \in \{1,\ldots, k-1\}$, and $(B,(a_1,\ldots,a_{n-1}))$ is an arbitrary ordered set partition in $\OSP(n-1,k-1)$ with $\inv(w)-a_n$ inversions.
        Conversely, any such pair together with any value $a_n \in \{1,\ldots, k-1\}$ yields a type I ordered set partition in $\OSP(n,k)$ with witness $j=n$.
        Summing over the value of $a_n$ contributes $\sum_{j=1}^{k-1} q^j S^o[n-1,k-1]= q[k-1]_qS^o[n-1,k-1]$.
    \end{itemize}
    Adding together the two cases completes the proof.
\end{proof}

Now we prove the main theorem of this section. 

\begin{proof}[Proof of Theorem~\ref{thm:unsigned-gf}]
For $1 \leq k \leq n$, define
\begin{equation}\label{eq:Def-F}
    F[n,k] := q^{1- \binom{k-1}{2}} \sum_{j=0}^{k-1}(-1)^j q^{\binom{j}{2}} \qbinom{k}{j}_q  [k-j-1]_q^n.
\end{equation}
Writing $k=n-b$, it suffices by Theorem~\ref{thm:improved-coefficient-formula} to prove that $S^I[n,k] = F[n,k]$. We will induct on $k$.

For the base case of $k=1$, the sum in equation~\eqref{eq:Def-F} has only the $j=0$ term: $[0]_q^n = 0$, so $F[n,1] = 0 = S^I[n,1]$ by Lemma~\ref{lem:SI-recurrence}.

Now let $k \geq 2$, and assume that $S^I[n-1,k-1] = F[n-1,k-1]$. 
By Lemma~\ref{lem:SI-recurrence} and the inductive hypothesis,
\begin{equation}
    S^I[n,k] = F[n-1,k-1] + q[k-1]_q S^o[n-1,k-1],
\end{equation}
so it suffices to prove the polynomial identity
\begin{equation}\label{eq:F-identity}
    F[n,k] = F[n-1,k-1] + q[k-1]_q S^o[n-1,k-1].
\end{equation}

Apply Pascal's identity 
\begin{equation}
    \qbinom{k}{j}_q = q^j \qbinom{k-1}{j}_q + \qbinom{k-1}{j-1}_q
\end{equation}
to $F[n,k]$ and the fact that $[k-j-1]_q = q^{-j}([k-1]_q - [j]_q)$ to write that
\begin{equation}
\begin{aligned}
    F[n,k] &= q^{1-\binom{k-1}{2}}[k-1]_q\sum_{j=0}^{k-1} (-1)^j q^{\binom{j}{2}} \qbinom{k-1}{j}_q [k-j-1]_q^{n-1}\\
    &\quad- q^{1-\binom{k-1}{2}}\sum_{j=0}^{k-1} (-1)^j q^{\binom{j}{2}}[j]_q\qbinom{k-1}{j}_q [k-j-1]_q^{n-1}\\
    &\quad+ q^{1-\binom{k-1}{2}}\sum_{j=0}^{k-1} (-1)^j q^{\binom{j}{2}}\qbinom{k-1}{j-1}_q[k-j-1]_q^n,
\end{aligned}
\end{equation}
which by Lemma~\ref{lem:SO-closed-form} simplifies to
\begin{align}
        F[n,k] &= q[k-1]_q S^o[n-1,k-1]\\
        &\quad + q^{1-\binom{k-1}{2}}\sum_{j=0}^{k-1} (-1)^j q^{\binom{j}{2}}\left(\qbinom{k-1}{j-1}_q [k-j-1]_q^{n} - [j]_q\qbinom{k-1}{j}_q[k-j-1]_q^{n-1}\right).\label{line:second-line}
\end{align}
Thus showing equation~\eqref{eq:F-identity} reduces to showing that line~\eqref{line:second-line} equals $F[n-1,k-1]$.
By applying the identity
\begin{equation}
    [k-j-1]_q \qbinom{k-1}{j-1}_q -[j]_q\qbinom{k-1}{j}_q = -q^{k-j-1}\qbinom{k-1}{j-1}_q,
\end{equation}
line~\eqref{line:second-line} becomes
\begin{equation}
    q^{1-\binom{k-1}{2}}\sum_{j=1}^{k-1} (-1)^j q^{\binom{j}{2}}\left(-q^{k-j-1}\qbinom{k-1}{j-1}_q\right)[k-j-1]_q^{n-1},
\end{equation}
where the $j=0$ term vanished.
By shifting the index of summation from $j \mapsto j+1$, we conclude that line~\eqref{line:second-line} equals
\begin{equation}
    q^{1-\binom{k-1}{2}+k-2}\sum_{j=0}^{k-2} (-1)^j q^{\binom{j+1}{2}-j}\qbinom{k-1}{j}_q[k-j-2]_q^{n-1} = F[n-1,k-1].
\end{equation}
This proves equation~\eqref{eq:F-identity}, completing the induction.
\end{proof}

\section{Future directions}

The conjecture of Sagan--Swanson (Theorem~\ref{thm:sagan-swanson-conjecture}) states that at $m=2$, there are two differently signed regions of the coefficients (where the \textbf{signed regions} are the maximal consecutive runs of coefficients of the same sign, which partition the nonzero coefficients). 
By looking at data for $m \geq 3$, it seems that a generalization of this phenomenon occurs.
As an example, we include data for $\Hilb(R_n^{(1,1)}; q;u) |_{u=-q^m} -1 = \mathcal H_n^{(m)} - 1$ for $2 \leq m \leq 4$ and $2 \leq n \leq 6$.
\begin{example}[$m=2$] 
    \begin{align*}
        \mathcal H_2^{(2)} - 1 &= q - q^2,\\
        \mathcal H_3^{(2)} - 1 &= 2q - 2q^3,\\
        \mathcal H_4^{(2)} - 1 &= 3q + 2q^2 - 2q^3 - 3q^4,\\
        \mathcal H_5^{(2)} - 1 &= 4q + 5q^2 - 5q^4 - 4q^5,\\
        \mathcal H_6^{(2)} - 1 &= 5q + 9q^2 + 5q^3 - 5q^4 - 9q^5 - 5q^6.
    \end{align*}
\end{example}
\begin{example}[$m=3$] 
    \begin{align*}
        \mathcal H_2^{(3)} - 1 &= q - q^3,\\
        \mathcal H_3^{(3)} - 1 &= 2q + 2q^2 - q^3 - 3q^4 - q^5 + q^6,\\
        \mathcal H_4^{(3)} - 1 &= 3q + 5q^2 + 3q^3 - 3q^4 - 8q^5 - 5q^6 + 2q^7 + 3q^8,\\
        \mathcal H_5^{(3)} - 1 &= 4q + 9q^2 + 11q^3 + 5q^4 - 9q^5 - 20q^6 - 16q^7 + 10q^9 + 6q^{10},\\
        \mathcal H_6^{(3)} - 1 &= 5q + 14q^2 + 24q^3 + 25q^4 + 7q^5 - 25q^6 - 50q^7 - 45q^8 - 11q^9 + 21q^{10} + 25q^{11} + 10q^{12}.
    \end{align*}
\end{example}
\begin{example}[$m=4$] 
    \begin{align*}
        \mathcal H_2^{(4)} - 1 &= q - q^4,\\
        \mathcal H_3^{(4)} - 1 &= 2q + 2q^2 + q^3 - 2q^4 - 3q^5 - q^6 + q^8,\\
        \mathcal H_4^{(4)} - 1 &= 3q + 5q^2 + 6q^3 + 2q^4 - 5q^5 - 10q^6 - 9q^7 - q^8 + 5q^9 + 4q^{10} + q^{11} - q^{12},\\
        \mathcal H_5^{(4)} - 1 &= 4q + 9q^2 + 15q^3 + 16q^4 + 7q^5 - 11q^6 - 31q^7 - 36q^8 - 20q^9 + 7q^{10} + 25q^{11}\\ 
        &\quad+ 20q^{12} + 4q^{13} - 5q^{14} - 4q^{15},\\
        \mathcal H_6^{(4)} - 1 &= 5q + 14q^2 + 29q^3 + 44q^4 + 47q^5 + 26q^6 - 24q^7 - 85q^8 - 122q^9 - 105q^{10}\\ 
        &\quad- 31q^{11} + 56q^{12} + 101q^{13} + 80q^{14} + 21q^{15} - 21q^{16} - 25q^{17} - 10q^{18}.
    \end{align*}
\end{example}
Observe that in all of these examples, as $n$ increases, the number of signed regions eventually increases to $m$. 
It would be interesting to further study this pattern, perhaps via $m$ nested sign-reversing involutions.
We have verified the following for all $n,m \leq 13$.
\begin{conjecture}
    Fix $m \geq 2$. Then for all $n \geq 1$, there are $\min(n,m)$ signed regions of the polynomial $\Hilb(R_n^{(1,1)}; q;u) |_{u=-q^m} = \mathcal H_n^{(m)}$.
\end{conjecture}

In this paper, we only worked in type $A$, and used the ground field $\C$.
The superspace coinvariant ring has been studied in type $B$ \cite{SaganSwanson2024, bhattacharya} and for wreath product groups \cite{SaganSwanson-ComplexReflection, RhoadesBhattacharya}; it has also been studied over $\GL_n(\mathbb{F}_q)$ \cite{RhoadesWilson-finite-fields}. 
It would be interesting to see if any of our results can be extended to those settings.

\section*{Acknowledgements}

The authors thank Bruce Sagan and Josh Swanson for helpful conversations. 
This material is partially based upon work supported by the National Science Foundation under Grant No. DMS-1929284 while the second author was in residence at ICERM in Providence, RI, during the Categorification and Computation in Algebraic Combinatorics semester program.
The second author was partially supported by the National Science Foundation Graduate Research Fellowship DGE-2146752.

\bibliographystyle{amsplain}
\bibliography{biblio}

\end{document}